\documentclass[reqno]{conm-p-l}
\usepackage{amsmath,amssymb,amsthm,amsfonts,mathdots,amscd,bbm}
\usepackage[numeric]{amsrefs}
\input colordvi

\swapnumbers

\setcounter{MaxMatrixCols}{10}

\newtheorem{thm}[equation]{Theorem}
\newtheorem{cor}[equation]{Corollary}
\newtheorem{lem}[equation]{Lemma}

\newtheorem{prop}[equation]{Proposition}

\newcommand{\secref}[1]{section~\ref{#1}}

\DeclareMathOperator{\sg}{sgn}

\DeclareMathOperator{\tr}{tr}

\DeclareMathOperator{\covol}{covol}

\numberwithin{equation}{section}

\renewcommand\b{\beta}
\newcommand\g{\gamma}
\renewcommand\d{\delta}
\newcommand\e{\varepsilon}
\renewcommand\l{\lambda}
\renewcommand\L{\mathcal L}

\newcommand\G{\Gamma}
\newcommand\f{\frac}
\newcommand\smallf[2]{{\textstyle{\frac{#1}{#2}}}}
\newcommand\srel[2]{\begin{smallmatrix} {#1} \\ {#2} \end{smallmatrix}}

\newcommand{\ndiv}{\nmid}

\newcommand{\Z}{{\mathbb{Z}}}
\newcommand{\R}{{\mathbb{R}}}
\newcommand{\RP}{{\mathbb{RP}}}
\newcommand{\C}{{\mathbb{C}}}
\newcommand{\A}{{\mathbb{A}}}
\newcommand{\Q}{{\mathbb{Q}}}

\newcommand\re{\text{Re~}}

\renewcommand\Re{\text{Re~}}

\renewcommand\O{{\mathcal O}}

\renewcommand\i{^{-1}}
\renewcommand\({\left(}
\renewcommand\){\right)}
\newcommand{\ttwo}[4]{
\(\begin{smallmatrix}{#1} & {#2}
\\ {#3} & {#4} \end{smallmatrix}\)}
\newcommand{\bx}{\hfill$\square$\vspace{.6cm}}
\newcommand{\sgn}{\operatorname{sgn}}
\newcommand{\isobar}{\boxplus}
\newcommand{\gobble}[1]{}
  \newcommand{\rangeref}[2]{%
    \ref{#1}--\afterassignment\gobble\fam 0\ref{#2}%
  }
\makeatletter
\def\imod#1{\allowbreak\mkern5mu({\operator@font mod}\,#1)}
\makeatother

\begin{document}

\title[Adelization of Automorphic Distributions and Eisenstein Series]{Adelization of Automorphic Distributions and Mirabolic Eisenstein Series}

\author{Stephen D. Miller}
\address{Department of Mathematics, Hill Center -- Busch Campus, Rutgers, The State University of New Jersey, 110 Frelinghuysen Rd,  Piscataway, NJ 08854-8019}
\email{miller@math.rutgers.edu}
\thanks{Partially supported
by NSF grant DMS-0901594 and an Alfred P. Sloan Foundation Fellowship.}

\author{Wilfried Schmid}
\address{Department of Mathematics, Harvard University, Cambridge, MA 02138}
\email{schmid@math.harvard.edu}
\thanks{Partially supported by DARPA grant HR0011-04-1-0031 and
NSF grant DMS-0500922}

\subjclass{ }
\date{April 6, 2011}

\dedicatory{Dedicated to Gregg Zuckerman on his 60th birthday}

\keywords{Eisenstein series, automorphic distributions, adelization, invariant pairings, intertwining operators}

\begin{abstract}

Automorphic representations can be studied in terms of the embeddings of abstract models of  representations  into
spaces of functions on  Lie groups that are invariant under   discrete subgroups.  In this paper we describe an
adelic framework to describe them for the group $GL(n,\R)$, and provide a detailed analysis of the automorphic
distributions associated to the mirabolic Eisenstein series.  We give an explicit functional equation for some
distributional pairings involving this mirabolic Eisenstein distribution, and the action of intertwining operators.
\end{abstract}

\maketitle

\section{Introduction}

Ever since the Poisson integral formula, the principal of recovering an eigenfunction from its   ``boundary
values''   (which are in general distributions) has been a useful tool in analysis.  For automorphic
forms, which are eigenfunctions of a ring of invariant differential operators, the boundary values can
alternatively be described in terms of embeddings of models of representations into spaces of functions, embeddings
which share the invariance of the automorphic forms.  These automorphic distributions then control an entire
automorphic representation in terms of a single object.

In previous papers we have applied automorphic distributions to studying summation formulas and the analytic
continuation of $L$-functions \cite{voronoi,glnvoronoi,korea}, mainly for the full level congruence subgroup
$GL(n,\Z)\subset GL(n,\R)$. In this paper we present automorphic distributions in an adelic setting, in order to
use them for general congruence subgroups.  We also provide a thorough treatment of the automorphic distributions
for a special but prominent type of Eisenstein series, the mirabolic Eisenstein series for the congruence subgroup
$\G_0(N)\subset GL(n,\Z)$. We derive a precise form of their Fourier expansions, which also gives the analytic
continuation of this mirabolic series, and prove an intertwining relation that is analogous to a functional
equation. We also show that these properties extend to a relevant automorphic pairing established in
\cite{pairingpaper} that involves these mirabolic Eisenstein distributions. In our forthcoming paper
\cite{extsqpaper} this pairing will be calculated as the exterior square $L$-function times a precise ratio of
Gamma factors, thereby giving a new construction of this $L$-function that   leads to   a stronger
analytic continuation than previously known, as well as a functional equation.

The notion of adelic automorphic distribution is designed so that the action of the $p$-adic groups $GL(n,\Q_p)$
matches its usual action on adelic automorphic forms.  This has the advantage of being able to quote certain
calculations, such as local integrals, that have already been performed in related problems.  One could also
attempt stronger generalizations, which more generally treat the boundary values on a finite number of $p$-adic
groups simultaneously with those on the real group, or which extend to number fields and different groups.

Sections 2, 3, and 4 contain, respectively, some properties of cuspidal automorphic distributions, mirabolic
Eisenstein distributions, and the pairings of automorphic distributions.  These topics are then reconsidered in
section 5 using adelic terminology, which   re-expresses   them in a different notation that is
useful in many applications.  We also include an appendix   recalling the known description of
the generic unitary dual of $GL(n,\R)$,   as well as   Langlands' recipe for defining the Gamma
factors of the tensor product, symmetric square, and exterior square $L$-functions. Both   are
useful in analytic number theory, where one inputs the structure of a functional equation, and uses constraints on
the shifts in the Gamma factors to obtain estimates.

It is a pleasure to dedicate this paper to Gregg Zuckerman on his 60th birthday, as his early work on Whittaker
functions is essential to clarity with which we now understand the generic unitary dual.  The first author in
particular extends his appreciation to Zuckerman for his friendliness and helpfulness as a colleague at an early
stage in his career.  We also wish to thank Bill Casselman, Erez Lapid, and Freydoon Shahidi for helpful
discussions, and the referee for a careful reading of the paper.

\section{Automorphic Distributions}
\label{autdistsec}

 In this section we recall the
notion of automorphic distribution.  We let $G$ denote the group of real points of a reductive matrix group defined
over $\Q$, and $\G\subset G$ an arithmetic subgroup. The particular examples that will matter to us are
$G=GL(n,\R)$, and a rational conjugate of a congruence subgroup\footnote{The principal congruence subgroup
$\G(m)\subset GL(n,\Z)$ is the kernel of the reduction map from $GL(n,\Z)$ to $GL(n,\Z/m\Z)$.  A congruence
subgroup is one which contains $\G(m)$ for some $m$.  For $n>2$, they are precisely the finite index subgroups.}
$\G \subset GL(n,\Z)$.  We let $Z_G=$ denote the center of $G$, and fix a unitary central character
\begin{equation}
\label{centchar1} \omega \, : \, Z_G\ \longrightarrow \
\{\,z\in\C^*\ \mid\ |z|=1\,\}\,.
\end{equation}
Then $G$ acts unitarily, by right translation, on the Hilbert space
\begin{equation}
\label{centchar2}
\gathered L^2_\omega(\G\backslash G) \  \  =
\qquad\qquad\qquad\qquad\qquad\qquad\qquad\qquad\qquad\qquad\qquad\qquad\qquad\ \ \
 \\ \ \{\, f\in
L^2_{\text{loc}}(\G\backslash G)\ \mid \ \int_{\G\backslash G/Z_G}
|f|^2\,dg < \infty\ \,\text{and} \, \ f(gz)=\omega(z)f(g),\, z\in Z_G\}\,.
\endgathered
\end{equation}
Automorphic distributions are associated to classical\footnote{As distinguished from adelic automorphic
representations.} automorphic representations, i.e., to $G$-invariant unitary embeddings
\begin{equation}
\label{autodist1} j\, :\, V\ \hookrightarrow \
L^2_\omega(\G\backslash G)
\end{equation}
of an irreducible unitary representation $(\pi,V)$ of $G$. The space of $C^\infty$ vectors $V^\infty\subset V$ is
dense in $V$, and carries a canonical Frech\'et topology. The linear map\begin{equation} \label{autodist2} \tau\ =
\ \tau_j\, :\, V^\infty \ \longrightarrow\ \C\,,\qquad \tau(v)\ = \ j(v)(e)\,,
\end{equation}
is well defined and $\G$-invariant because $j$ maps $V^\infty$ to $C^\infty(\G\backslash G)$. It is also continuous
with respect to the topology of $V^\infty$, and thus may be regarded as a $\G$-invariant {\it distribution vector}
for the dual unitary representation $(\pi',V')$,
\begin{equation}
\label{autodist3} \tau\ \in\  \left((V')^{-\infty}\right)^\G.
\end{equation}
This is the {\it automorphic distribution} corresponding to the automorphic representation (\ref{autodist1}). The
former determines the latter completely:~for $v\in V^\infty$ and $g\in G$,
\begin{equation}
\label{autodist4} j(v)(g)\ = \ j(\pi(g)v)(e)\ = \ \langle \tau ,
\pi(g)v\rangle\ = \ \langle \pi'(g^{-1})\tau, v\rangle\,,
\end{equation}
so one can reconstruct the functions $j(v)$, $v\in V^\infty$, in terms of $\tau$\,; because of the density of
$V^\infty$ in $V$, $\tau$ determines $j(v)\in L^2_\omega(\G\backslash G)$ for all vectors $v\in V$.

  In the following, we shall also consider automorphic distributions that do not correspond to irreducible
summands of $L^2_\omega(\G\backslash G)$, as in (\ref{autodist1}). These are $\G$-invariant distribution vectors
for admissible representations of finite length which need not be unitary, in particular the distribution analogues
of Eisenstein series.

Most traditional approaches to automorphic forms work with finite dimensional $K$-invariant spaces of {\it
automorphic functions}, meaning collections of functions $\{j(v)\}$ with $v$ ranging over a basis of a finite
dimensional, $K$-invariant subspace of $V$; here $K\subset G$ denotes a maximal compact subgroup. Finite
dimensional, $K$-invariant subspaces necessarily consist of $C^\infty$ vectors, so these automorphic functions are
smooth. When $(\pi,V)$ happens to be a spherical representation, it is natural to consider the single automorphic
function $j(v_0)$ determined by the~-- unique, up to sca\-ling~-- $K$-fixed vector $v_0\in V$, $v_0\neq 0$. In that
case $j(v_0)$ can be interpreted as a $\G$-invariant function on the symmetric space $G/K$. For non-spherical
representations, typically no such canonical choice exists, and   making a definite choice   may in
fact be delicate. In the theory of integral representations of $L$-functions, for example, a wrong choice may
result in an integral being identically zero instead of the $L$-function one is interested in, or it may result in
an archimedean integral that is more difficult to compute, possibly even not computable at all \cite[\S
2.6]{bumprssurvey}.  By working directly with the automorphic distribution $\tau$, our approach avoids these
issues; in particular it does not matter whether $(\pi,V)$ is spherical or not.

Results of Casselman \cite{Casselman:1980} and Casselman-Wallach \cite{Casselman:1989,Wallach:1983} imply that
$(V')^{-\infty}$ can be realized as a closed subspace of the space of distribution vectors for a
not-necessarily-unitary principal series representation,
\begin{equation}
\label{autodist6} (V')^{-\infty}\ \hookrightarrow\
V_{\l,\d}^{-\infty}\,;
\end{equation}
the subscripts $\l,\d$ refer to the parameters of the principal series and will be explained shortly. Thus
\begin{equation}
\label{autodist7} \tau\ \in\  \left(V_{\l,\d}^{-\infty}\right)^\G
\end{equation}
becomes a $\G$-invariant distribution vector for a principal series representation\footnote{This convention differs
slightly from our earlier papers \cite{voronoi,korea}, where we had  switched the role of $(\pi,V)$ and $(\pi',V')$
at this stage for notational convenience.  However, that switch causes a notational inconsistency for our adelic
automorphic distributions  in  section~\ref{adelize} that we have elected to avoid.}   with parameters
$(\l,\d)$.   The embedding (\ref{autodist6}) is equivalent to the representation $V$ being a quotient of
the dual principal series representation $V_{-\l,\d}$.

In describing the principal series, we specialize the choice of $G$ to keep the discussion concrete,
\begin{equation}
\label{ps1} G\ = \ GL(n,\R).
\end{equation}
Its two subgroups
\begin{equation}
\label{ps2}
\begin{aligned}
&B\ = \  \left\{ \left. \left(\begin{smallmatrix}
b_1 & 0 & {\textstyle\dots} & 0 \\
{ }_{\scriptstyle *} & b_2 & {\textstyle\dots}   & 0\\
\vdots & \vdots &  \ddots    & \vdots\\
{ }_{\scriptstyle *} & { }_{\scriptstyle *} &{\textstyle\dots} &
b_n
\end{smallmatrix}\right) \ \ \right| \ \ b_j\in \R^*,\ \ 1\leq j\leq n\ \right\}\ ,
\\
{\  }
\\
&\qquad\qquad N\ = \  \left\{\ \left(\begin{smallmatrix}
1 & {\scriptstyle *} & {\textstyle\dots} & {\scriptstyle *} \\
0 & 1 & {\textstyle\dots}   & {\scriptstyle *} \\
\vdots & \vdots &  \ddots    & \vdots\\
0 & 0 &{\textstyle\dots} &   1
\end{smallmatrix}\right) \  \right\}
\end{aligned}
\end{equation}
are, respectively, maximal solvable and maximal unipotent. The quotient
\begin{equation}
\label{ps3} X\ = \ G/B
\end{equation}
is compact, and is called the {\it flag variety} of $G$. Since $N$ acts freely on its orbit through the identity
coset in $X=G/B$ and has the same dimension as $X$,  one can identify $N$ with an dense open subset of the flag
variety,
\begin{equation}
\label{ps4} N \ \ \simeq \ \ N\cdot eB\ \ \hookrightarrow \ \ X\,.
\end{equation}
This is the {\it open Schubert cell\/} in $X$.

The principal series is parameterized by pairs $(\l,\d)\in \C^n \times (\Z/2\Z)^n$. For any such pair, we define
the character
\begin{equation}
\label{ps5}
\begin{aligned}
&\chi_{\l,\d} \, :\, B\  \longrightarrow\  \C^*\,,
\\
&{\ }
\\
&\chi_{\l,\d} \left(\begin{smallmatrix}
b_1 & 0 & {\textstyle\dots} & 0 \\
{ }_{\scriptstyle *} & b_2 & {\textstyle\dots}   & 0\\
\vdots & \vdots &  \ddots    & \vdots\\
{ }_{\scriptstyle *} & { }_{\scriptstyle *} &{\textstyle\dots} &
b_n
\end{smallmatrix}\right)\ \ = \ \ {\prod}_{j=1}^n \left( (\sgn b_j)^{\d_j} |b_j|^{\l_j}\right)\,.
\end{aligned}
\end{equation}
The parametrization also involves the quantity
\begin{equation}
\label{ps6} \rho\ \ = \ \ \left(\textstyle
\frac{n-1}2,\,\textstyle \frac{n-3}2,\, \dots,\, \textstyle
\frac{1-n}2 \right)\ \in \ \C^n\,.
\end{equation}
Each pair $(\l,\d)$ determines a $G$-equivariant $C^\infty$ line bundle $\L_{\l,\d}\to X$, on whose fiber at the
identity coset the isotropy group $B$ acts via $\chi_{\l,\d}$. By pullback from $X=G/B$ to $G$, the space of
$C^\infty$ sections becomes naturally isomorphic to a space of $C^\infty$ functions on $G$,
\begin{equation}
\label{ps7} C^\infty(X, \L_{\l,\d}) \ \simeq  \ \{\,f\!\in\! C^\infty(G)
\mid f(gb)=\chi_{\l,\d}(b^{-1})f(g)\ \,\text{for}\,\ g\in G,\,
b\in B\,\}.
\end{equation}
This isomorphism relates the translation action of $G$ on sections of $\L_{\l,\d}$ to left translation of
functions. By definition,
\begin{equation}
\label{ps8} V_{\l,\d}^\infty \ \ = \  \ C^\infty(X, \L_{\l-\rho,\d})
\end{equation}
is the space of $C^\infty$ vectors of the principal series representation $V_{\l,\d}\,$; the shift by $\rho$ serves
the purpose of making the labeling compatible with Harish-Chandra's parametrization of infinitesimal characters.
Analogously
\begin{equation}
\label{ps9}
\begin{aligned}
V_{\l,\d}^{-\infty} \ &= \ C^{-\infty}(X, \L_{\l-\rho,\d})
\\
&\simeq \ \{f\!\in\! C^{-\infty}(G) \mid
f(gb)=\chi_{\l-\rho,\d}(b^{-1})f(g)\ \text{for}\ g\!\in\! G,
b\!\in\! B\}
\end{aligned}
\end{equation}
is the space of distribution vectors. The isomorphism in the second line is entirely analogous to (\ref{ps7}).

The group $N$, which we had identified with the open Schubert cell, intersects $B$ only in the identity. Thus, when
the equivariant line bundle $\L_{\l-\rho,\d}\to X$ is restricted to the open Schubert cell, it becomes canonically
trivial, and distribution sections of the restricted line bundle become scalar-valued distributions,
\begin{equation}
\label{ps10} C^{-\infty}(N, \L_{\l-\rho,\d}) \   = \
C^{-\infty}(N)\,.
\end{equation}
This identification is $N$-invariant, of course. In particular any automorphic distribution
\begin{equation}
\label{ps11} \tau  \ \in \  (V_{\l,\d}^{-\infty})^\G \ = \
C^{-\infty}(X, \L_{\l-\rho,\d})^\G
\end{equation}
restricts to a $\G\cap N$-invariant distribution on the open Schubert cell:
\begin{equation}
\label{ps12} \tau  \ \in \   C^{-\infty}\bigl(\G \cap N\backslash
N\bigr).
\end{equation}
Two comments are in order. Ordinarily, a distribution on a manifold is not completely determined by its restriction
to a dense open subset. Since the $\G$-translates of the open Schubert cell cover $X$, any automorphic distribution
{\it is\/} determined by its restriction to $N$. The containment (\ref{ps12}) should be interpreted in this sense.
Secondly, when one views $\tau$ this way, the invariance under $\G\cap N$ is directly visible. The invariance under
any $\g\in \G$ that does not lie in $N$ can be described in terms of an appropriate factor of automorphy.

The abelianization $N/[N,N]$ -- i.e., the quotient of $N$ by the derived subgroup $[N,N]$ -- is isomorphic to the
additive group $\R^{n-1}$. Concretely, let
\begin{equation}
\label{ab1} n(x)\ = \  \left(\begin{smallmatrix}
1 & \ x_1 & 0 & {\textstyle\dots} & 0 \\
{ }_{\scriptstyle 0} & {\ }_{\scriptstyle 1} & {\ }_{\scriptstyle x_2} & {\textstyle\dots}   & { }_{\scriptstyle 0}\\
\vdots & { }^{\scriptstyle 0} & { }^{\scriptstyle 1} & \ddots    & \vdots\\
\vdots & \vdots & \vdots & \ \ \ddots & {\ }^{\scriptstyle x_{n-1}} \\
{ }_{\scriptstyle 0} & {\ }_{\scriptstyle 0} & { }_{\scriptstyle
0} &{\textstyle\dots} &   { }_{\scriptstyle 1}
\end{smallmatrix}\right) \ \ \   (\, x= (x_1,x_2,\dots ,x_{n-1})\in \R^{n-1} \,)\,;
\end{equation}
then $\,\R^{n-1} \simeq N/[N,N]$ via
\begin{equation}
\label{ab2} \R^{n-1} \ni x \ \longmapsto \  \text{image of} \ n(x)
\in N/[N,N]\,.
\end{equation}
A congruence subgroup $\G \subset G$ intersects $N$ in a cocompact subgroup of $N$, and similarly $[N,N]$ in a
cocompact subgroup of itself. This allows us to define
\begin{equation}
\label{ab3}
\tau_{\text{abelian}}\ = \   \frac{1}{\covol(\G\cap [N,N])}    \int_{  (\G\cap [N,N])   \backslash [N,N]} \ell(n)\tau \,dn \,,
\end{equation}
  the sum of   the {\it abelian Fourier component\/} of the automorphic distribution $\tau$, as in
(\rangeref{ps11}{ps12}); $\ell(n)$ denotes left translation by $n$. Equivalently
\begin{equation}
\label{ab4} \tau\ \ = \ \ \tau_{\text{abelian}}\  + \ \cdots\ \ ,
\end{equation}
where $\ \cdots\ $ refers to the sum of Fourier components of $\,\tau$ on which $[N,N]$ acts non-trivially. By
construction,   $\, \tau_{\text{abelian}}\in V_{\l,\d}^{-\infty}\,$,\,\ and the restriction of $\,
\tau_{\text{abelian}}\,$ to $N$ lies in $C^{-\infty}\bigl(\bigl([N,N]\cdot (\G\cap N)\bigr)\backslash N
\bigr)$.

The quotient $\bigl([N,N]\cdot (\G\cap N)\bigr)\backslash N$ is compact, connected, abelian, hence a torus. Like
any distribution on a torus, $\, \tau_{\text{abelian}}$ can be expressed as an infinite linear combination of
characters.  We may write
\begin{equation}
\label{ab6a} \tau_{\text{abelian}}(n(x))\ = \ {\sum}_{k\in
\Q^{n-1}}\ c_k\,e(k_1\,x_1 + k_2\,x_2\, + \dots +
k_{n-1}\,x_{n-1})\,
\end{equation}
in which  the coefficients $c_k$   are tacitly assumed to   vanish unless $k$ lies inside $M\i\Z$,
for some appropriate integer $M$ (which takes into account the size of the torus).  Here, as from now on, we use
the notational convention
\begin{equation}
\label{ab7} e(z)\ =_{\text{def}}\ e^{2\pi i z}\,.
\end{equation}
In the case that $\G$ equals the full level congruence group $GL(n,\Z)$, $\G\cap N=N(\Z)$ and $k$ lies in
$\Z^{n-1}$, because  the isomorphism (\ref{ab2}) induces $\bigl([N,N]\cdot (\G\cap N)\bigr)\backslash N\simeq
\Z^{n-1} \backslash \R^{n-1}$.

Recall the notion of a cuspidal automorphic representation:~an automorphic representation in the same sense as
(2.3), such that
\begin{equation}
\label{cuspidal1o} \int_{(\G\cap N)\backslash N}\ j(v)(ng)\,dn\ = \
0\ \ \text{for every $v\in V^\infty$, $g\in G$,}
\end{equation}
whenever $N\subset G$ is the unipotent radical of a proper parabolic subgroup, defined over $\Q$. We call an
automorphic distribution $\tau\in (V^{-\infty})^\G$ cuspidal if the corresponding automorphic representation has
that property; this is equivalent to
\begin{equation}
\label{cuspidal2o} \int_{N/(\G\cap N)}\ \ell(n)\tau\,dn\,\ =\, 0
\end{equation}
for every $N$ as in (\ref{cuspidal1o}) \cite[Lemma~2.16]{decay}. In our particular setting of $GL(n)$ the
cuspidality of $\tau$   implies
\begin{equation}
\label{cuspidal1}
k\in \Q^{n-1}\,, \ k_j = 0 \,\ \text{for at least one $j$, $1\leq j\leq n-1$}\ \ \Longrightarrow\ \ c_k=0\,,
\end{equation}
as can be seen by averaging the $u$-translates of $\tau$ over $U_{j,n-j}(\Z)\backslash U_{j,n-j}$, the quotient of
the unipotent radical of the $(j,n-j)$ parabolic modulo its group of integral points.   However, the
cuspidality of $\tau$ cannot be characterized solely in terms of the vanishing of certain Fourier coefficients at
each cusp; it also involves conditions ``at infinity"~-- see, for example, \cite[\S5]{voronoi}.

The Casselman embedding (\ref{autodist6}) does not necessarily determine the parameters $(\l,\d)$ uniquely. For
example, when $V_{\l,\d}$ is an irreducible principal series representation, $(\l,\d)$ is determined only up to the
action of the Weyl group. The abelian Fourier coefficients $c_k$, $k\in \Q^{n-1}$, do depend on the choice of
Casselman embedding. When $\tau$ is cuspidal, one can introduce its {\it renormalized Fourier coefficients}
\begin{equation}
\label{ab8} a_{(k_1,k_2,\dots,k_{n-1})}\, = \,
\prod_{j=1}^{n-1}\left( (\sgn k_j)^{\d_1 + \d_2 + \dots + \d_j}
\,|k_j|^{\l_1 + \l_2 + \dots + \l_j}\right)
c_{(k_1,k_2,\dots,k_{n-1})}\,,
\end{equation}
which have canonical meaning. The $L$-functions of $\,\tau$ can be most naturally expressed in terms of the $a_k$.
For $k$ coprime to a finite set of primes depending on $\tau$, the $a_k$ are actually the eigenvalues of certain
Hecke operators $T_k$ acting on the automorphic representation,   provided the Hecke action preserves
the automorphic representation. This applies to   all $k$ when $\G=GL(n,\Z)$, demonstrating that the $a_k$
are independent of the particular Casselman embedding. This independence can also be shown directly, without
reference to Hecke operators -- meaning that this independence holds for congruence subgroups $\G$ as well.  We
shall see this from a different point of view later in \secref{adelize}, in terms of adelic Whittaker functions.

The terms in (\ref{ab6a}) have a canonical extension from the big Schubert cell $N$ to $G/B$ (i.e., the opposite of
the restriction in (\ref{ps10}-\ref{ps12})); see \cite{chm}, where this issue is considered and resolved in greater
generality.  Let us consider the canonical extension of the additive character $n(x)\mapsto e(x_1+\cdots +
x_{n-1})$ in (\ref{ab6a}), which we will call the ``Whittaker distribution'' $w_{\l,\d}\in V_{\l,\d}^{-\infty}$ to
emphasize its dependence on the principal series parameters. Its restriction to the big Bruhat cell $NB\subset G$
is determined by the transformation formula
\begin{equation}\label{btransform}
\gathered
    w_{\l,\d}\(\(\begin{smallmatrix}
      1 & x_{1} & \star & \star  & \star          \\
      & 1     & x_{2}& \star & \star \\
       & & \ddots  & \star & \star \\
       & & & 1 & x_{n-1} \\
       & & & & 1
    \end{smallmatrix}\)\(\begin{smallmatrix}
      b_1 &  & & &          \\
      \star & b_2      &  & &  \\
      \star  & \star & \ddots  & & \\
        \star &  \star &  \star & b_{n-1} & \\
       \star &  \star&  \star&  \star& b_n
    \end{smallmatrix}\)\) \qquad\qquad\qquad\qquad\qquad\qquad
    \\ \qquad\qquad\qquad\qquad = \ \ e(x_1+\cdots +
    x_{n-1})\,{\prod}_{j=1}^n
    |b_j|^{(n+1)/2-j-\l_j}\sgn(b_j)^{\d_j}\,.
\endgathered
\end{equation}
  For $k=(k_1,k_2,\dots,k_n)$, let $D(k)=\operatorname{diag}(k_1\cdots k_{n-1},k_2\cdots k_{n-1},\ldots,
k_{n-1},1)$ denote the diagonal matrix with diagonal entries $k_1\cdots k_{n-1},k_2\cdots k_{n-1},\ldots,
k_{n-1},1$. If each $k_j\neq 0$ (as is automatically true for the indices corresponding to a cuspidal $\tau$)
con\-ju\-gation by $D(k)$ transforms the character $n(x) \mapsto e(x_1 + \dots + x_{n-1})$ into the character
$n(x)\mapsto e(k_1x_1+\cdots + k_{n-1}x_{n-1})$. The canonical extension of the latter is therefore given by
\begin{equation}
\begin{aligned}
\label{canonexttauabel-1}
    &w_{\l,\d}\left(D(k) g D(k)^{-1}\right) \ \ =
    \\
&\qquad\qquad = \ \ w_{\l,\d}\left(D(k) g\right) {\prod}_{j=1}^{n-1}\left( {\prod}_{i=j}^{n-1} |k_i|^{-(n+1)/2+ j + \l_j} \,{\prod}_{i=1}^j k_i^{\d_j} \right) \,.
\end{aligned}
\end{equation}
In view of (\ref{ab6a}) and (\ref{ab8}), the canonical extension of $\tau_{\text{abelian}}$ to $G$ can be written
as
\begin{equation}
\label{canonexttauabel}
    \tau_{\text{abelian}}(g) \ \ = \ \
{\sum}_{k\,\in\,\Q^{n-1}}  \f{a_k}{\left| \prod_{j=1}^{n-1}
k_j^{j(n-j)/2} \right|}\,w_{\l,\d}(D(k) g)\,.
\end{equation}
One then also has the following  equality between distributions on $G$:
\begin{equation}\label{taucoeffbform}\gathered
 \f{1}{\covol(\Gamma\cap N)} \,  \int_{\Gamma\cap N\backslash N} \tau(u g) \, e(-k_1 \,u_{1,2}- \cdots  -
    k_{n-1}\,u_{n-1,n}) \,du  \qquad \ \ \ \  \\
    \qquad \qquad \qquad\qquad\qquad\qquad \qquad
     = \ \ \f{a_k}{\left| \prod_{j=1}^{n-1}
k_j^{j(n-j)/2} \right|}\,w_{\l,\d}(D(k) g)\,,
\endgathered
\end{equation}
  where $u_{i,j}$ denote the entries of $u\in N$ and $\operatorname{covol}(\Gamma\cap N)$ denotes the volume of the quotient $\Gamma\cap N\backslash N$ under the Haar measure $du$, normalized so that $\operatorname{covol}(N(\Z))=1$.

A number of relations involving automorphic distributions, such as the functional equations of their $L$-functions,
involve not only a particular automorphic distribution -- or equivalently, the corresponding automorphic
representation -- but also its contragredient. The map
\begin{equation}
\label{ab9} g\ \mapsto \ \widetilde g\,,\ \ \widetilde g\, = \,
w_{\text{long}}(g^t)^{-1}w_{\text{long}}^{-1}\,, \ \ \text{with}\
\ w_{\text{long}}\, = \, \left(
\begin{smallmatrix}
 & & & & 1 \\
 & & & \cdot & \\
 & & \cdot & & \\
 & \cdot &  & & \\
 1 & & & &
\end{smallmatrix}
\right),
\end{equation}
defines an outer automorphism of $G= GL(n,\R)$, which preserves the subgroups $GL(n,\Z)$, $B$ and $N$. One easily
checks that
\begin{equation}
\label{ab10} \widetilde\tau (g)\ =_{\text{def}} \tau\!\(\widetilde
g\)\ \in \ (V_{\tilde\l,\tilde\d}^{-\infty})^{\widetilde\G}\   , \ \ \text{with} \, \ \widetilde{\G} \,=\,\{\widetilde\g \,|\,\g\,\in\,\G\}\,,
\end{equation}
the contragredient of $\,\tau$, has abelian Fourier coefficients
\begin{equation}
\label{ab11} \widetilde c_{(k_1,k_2,\dots,k_{n-1})}\ = \
c_{(-k_{n-1},-k_{n-2},\dots,-k_1)}
\end{equation}
and principal series parameters
\begin{equation}
\label{ab12} \widetilde \l\ = \ (-\l_{n}, -\l_{n-1}, \dots ,
-\l_1)\,, \ \ \ \widetilde \d\ = \ (\d_{n}, \d_{n-1}, \dots ,
\d_1)\,.
\end{equation}

\section{Mirabolic Eisenstein series for $GL(n)$}
\label{eisenstein}

The Epstein   zeta functions   on $GL(n,\R)$, which are sums of powers of the norms of lattice
vectors in $\R^n$, were an early example of higher rank Eisenstein series.  They have a functional equation and
analytic continuation coming from Poisson summation, in complete analogy with the Riemann   zeta function.
  Langlands, and later Jacquet and Shalika \cite{jseuler}, studied mirabolic Eisenstein series, which are
an adelic generalization involving homogenous functions other than the norm.  They play a crucial role in the
functional equation and analytic continuation of a number of integral representations of $L$-functions,
e.g.~\cite{pspatterson,bumpginzburg,bumpfriedberg,ginzburge6}.  In this section  we describe their distributional
counterparts.   Proposition~\ref{eisonu} gives the analytic continuation  and an explicit formula for their Fourier
coefficients in terms of $L$-functions and arithmetic sums.  These  have direct applications elsewhere, most
recently to string theory where they describe fine details of graviton scattering amplitudes (see, for example,
\cite{green,pioline}).   A functional equation is given in proposition~\ref{eisfe}. The analytic properties  later
transfer to  the pairings in \secref{pairingsec}.  They are understood most easily in classical terminology; in
section~\ref{adelize} we shall convert them into adelic expressions  whose analytic properties rest on what is
proven here. It is possible to recover the results here from \cite{jseuler}, using sophisticated machinery of
Casselman and Wallach.  However,  the translation between the two is somewhat lengthy and unenlightening, and so we
have chosen to rederive them from basic principles instead, highlighting the role of degenerate principal series
and intertwining operators.

Mirabolic Eisenstein series are induced from one dimensional representations of the so-called mirabolic subgroup of
$GL(n)$, colloquially dubbed the ``miraculous parabolic''\footnote{The terminology in the literature is not
entirely consistent:~some reserve the term ``mirabolic" for the stabilizer of a line in $\R^n$, e.g. $\widetilde
P$, but not $P$.}. In fact, the functional equation involves not just one, but two different mirabolic subgroups
and Eisenstein series. The mirabolic subgroups and the ``opposites" of their unipotent radicals are
\begin{equation}
\label{mira1}
\begin{aligned}
&P = \left\{ \left. \left(
\begin{smallmatrix}
\textstyle{a} & 0 &\textstyle{\dots} & 0 \\
{ }_{\textstyle{*}} &    &  & \\
\vdots & & { }^{\textstyle{C}} &   \\
\textstyle{*} &  & &
\end{smallmatrix}
\right) \  \right|  \ C \in GL(n-1,\R),\ \, a \in \R^* \right\} \,,\\
 { } \\
&\qquad \widetilde P = \left\{ \left. \left(
\begin{smallmatrix}
 & & & 0 \\
 & { }^{\textstyle{C}} & & \vdots \\
 &  & & { }_{\scriptstyle{0}} \\
 { }_{\textstyle{*}} & \textstyle{\dots} & { }_{\textstyle{*}} &
 { }_{\textstyle{a}}
  \end{smallmatrix}
\right)\   \right| \  C \in GL(n-1,\R),\ \, a \in \R^* \right\} \,,\\
 { } \\
&\qquad\qquad U =  \left\{  \left(
\begin{smallmatrix}1& {\textstyle{*}} & \textstyle{\dots}  & {\textstyle{*}} \\
  & { }_{\scriptstyle{1}} & \ \  { }_{{ }_{\textstyle{0}}} &  \\
{ }^{\textstyle{0}} & & \ \ \ddots &  \\
 &  &  & 1
 \end{smallmatrix}
\right)  \right\}\,,\ \ \ \widetilde U =  \left\{  \left(
\begin{smallmatrix}
{ }_{\scriptstyle{1}} &  &  & {}_{\textstyle{*} } \\
 & \ddots  & {}^{\textstyle{0}} & \vdots  \\
 { }^{\textstyle{0}}  &   & 1 & {\textstyle{*}}  \\
 &  &   &   1
  \end{smallmatrix}\right)  \right\}\,;
\end{aligned}
\end{equation}
note that the outer automorphism (\ref{ab9}) relates $P$ to $\widetilde P$ and $U$ to $\widetilde U$. In analogy to
the flag variety $X=G/B$,
\begin{equation}
\label{mira2} Y\ = \ G/P\ \ \text{and}\ \ \ \widetilde Y\ = \
G/\widetilde P
\end{equation}
  are {\it generalized flag varieties}.   The former can be naturally identified with the
projective space of hyperplanes
 in $\R^n$, the latter with the projective space of lines. Since
$U\cap P = \widetilde U \cap \widetilde P = \{e\}$, we can identify $U$ and $\widetilde U$ with the open Schubert
cells in these two spaces,
\begin{equation}
\label{mira3} U\ \simeq \ U\cdot eP\ \hookrightarrow \ Y\,,\qquad
\widetilde U\ \simeq \ \widetilde U \cdot e\widetilde P\
\hookrightarrow \widetilde Y\,.
\end{equation}
This is again entirely analogous to (\ref{ps4}).

 For $\nu\in \C$  and $\e\in \Z/2\Z$, we define
\begin{equation}
\label{mira4}
\begin{aligned}
&\chi_{\nu, \e }\, :\, P\ \to \ \C^*\,, \ \ \chi_{\nu, \e } \left(
\begin{smallmatrix}
\textstyle{a} & 0 &\textstyle{\dots} & 0 \\
{ }_{\textstyle{*}} &    &  & \\
\vdots & & { }^{\textstyle{B}} &   \\
\textstyle{*} &  & &
\end{smallmatrix}\right)\ = \ |a|^{\frac {(n-1)\nu}n}  (\sgn a )^\e  |\det B|^{-\frac \nu
n}\,,
 \\
{  }
\\
&\qquad \widetilde \chi_{\nu,\e }\, :\, \widetilde P\ \to \ \C^*\,, \ \
\ \widetilde \chi_{\nu,  \e  } \left(
\begin{smallmatrix}
 & & & 0 \\
 & { }^{\textstyle{B}} & & \vdots \\
 &  & & { }_{\scriptstyle{0}} \\
 { }_{\textstyle{*}} & \textstyle{\dots} & { }_{\textstyle{*}} & { }_{\textstyle{a}}
\end{smallmatrix}\right)\ = \ |\det B|^{\frac{\nu}{n}}\,|a|^{-\frac
{(n-1)\nu}n} (\sgn a)^\e  \,.
\end{aligned}
\end{equation}
We study these two characters without any loss of generality, because they  account for all characters of $P$ and
$\widetilde{P}$, up to tensoring by central characters.   Taking these other choices amounts to multiplying our
eventual Eisenstein distributions   by $\sgn(\det{g})$, and has no analytic impact. The quantity
\begin{equation}
\label{mira5} \rho_{\text{mir}}\ =  \  \frac n2
\end{equation}
plays the role of $\rho$ in the present context.

There exist unique $G$-equivariant $C^\infty$ line bundles $\L_{\nu,\e } \to Y$, $\widetilde\L_{\nu,\e } \to
\widetilde Y$, on whose fibers at the identity cosets the isotropy groups act by, respectively, $\chi_{\nu,\e }$
and $\widetilde\chi_{\nu, \e }$. The group $G$ acts via left translation on
\begin{equation}
\label{mira6}
\begin{aligned}
W_{\nu, \e  }^\infty \ &= \ C^\infty(Y,\L_{\nu-\rho_{\text{mir}},  \e  })
\\
&\simeq \  \{f\!\in\! C^\infty(G) \mid
f(gp)=\chi_{\nu-\rho_{\text{mir}}, \e  }(p^{-1})f(g)\ \text{for}\ g\!\in\! G,
p\!\in\! P\} \,,
\\
\widetilde W_{\nu,  \e  }^\infty \ &= \ C^\infty(\widetilde Y,\widetilde
\L_{\nu-\rho_{\text{mir}},  \e  })
\\
&\simeq \  \{f\!\in\! C^\infty(G) \mid f(g \tilde
p)=\widetilde\chi_{\nu-\rho_{\text{mir}},  \e  }(\tilde p^{-1})f(g)\ \text{for}\
g\!\in\! G,\tilde p\!\in\! \widetilde P\} \,.
\end{aligned}
\end{equation}
In particular, functions $f\in W_{\nu,  \e  }^\infty$ and $\tilde{f} \in \widetilde{W}_{\nu, \e  }^\infty$ obey the
respective transformation laws
\begin{equation}
\label{mira6explicated}
\begin{aligned}
    f&\(g\ttwo{a}{}{\star}{B}\) \ = \
    |a|^{n/2-\nu}\,  (\sgn a)^\e   f(g) \  \ \text{and} \\ & \tilde{f}\(g\ttwo{B}{}{\star}{a}\) \ = \
    |a|^{\nu-n/2}\,  (\sgn a)^\e  \tilde{f}(g)\,,  \ \ \text{provided} \ \ |a||\det B|=1\,.
\end{aligned}
\end{equation}
 These are the spaces of $C^\infty$ vectors for
degenerate principal series representations $W_{\nu, \e  }$, $\widetilde W_{\nu, \e  }$.

As in the case of the principal series, the line bundle $\L_{\nu-\rho_{\text{mir}},  \e }$ is equivariantly trivial over the
open Schubert cell $U\subset Y$. Since $\d_e\in C^{-\infty}(U)$, the Dirac delta function at $e\in U$, evidently
has compact support in $U$, we may regard it as a distribution section of $\L_{\nu-\rho_{\text{mir}},  \e  }$, or in other
words, as a vector in $W_{\nu, \e  }^{-\infty}$. This makes\begin{equation} \label{mira10} \d_\infty\
=_{\text{def}}\ \ell(w_{\text{long}})\d_e \ \in \ W_{\nu,  \e }^{-\infty}
\end{equation}
well defined. By construction, $\d_\infty$ is supported at $w_{\text{long}}P \in Y$, the unique fixed point of $U$,
also known as the closed Schubert cell in $Y$. Similarly there exists a delta function
$\d_{\widetilde\infty}\in\widetilde W_{\nu,\e  }^{-\infty}$ supported on the closed Schubert cell
$w_{\text{long}}\widetilde{P}\in \widetilde Y$.

Mirabolic Eisenstein series are globally induced from a character of $P$ or $\widetilde{P}$. As for their analytic
properties, it suffices to study them for the congruence subgroups
\begin{equation}\label{gamma0n}
    \G_0(N) \ \ = \ \ \left\{\g \,\in\,GL(n,\Z)\,\mid\, \g\,\equiv \,
   \( \begin{smallmatrix}
      \star & \cdots & \star & \star \\
      \vdots & \ddots & \vdots & \vdots \\
      \star & \cdots & \star & \star \\
      0 & \cdots & 0 & \star \\
    \end{smallmatrix}\) \pmod N \right\}\ \
\end{equation}
or
\begin{equation}\label{gammatild0n}
    \widetilde{\G}_0(N) \ \ = \ \ \left\{\g \,\in\,GL(n,\Z)\,\mid\, \g\,\equiv \,
\(    \begin{smallmatrix}
      \star & \star & \cdots & \star \\
      0 & \star & \cdots & \star \\
      \vdots & \vdots & \ddots & \vdots \\
      0 & \star & \cdots & \star \\
    \end{smallmatrix}\) \pmod N \right\}\,,
\end{equation}
by means of a reduction we will discuss in \secref{adelize}. Of course $\G_0(N)$ and $\widetilde{\G}_0(N)$ are
related by the outer automorphism (\ref{ab9}).  Any Dirichlet character $\psi$ modulo $N$ lifts to characters
$\alpha$ of $\G_0(N)$ and $\widetilde{\alpha}$ of $\widetilde{\G}_0(N)$ defined   through the formulas
\begin{equation}\label{omegachar}
\alpha(\g) \ = \ \psi(\g_{nn})\i \ \ \ \text{and} \ \ \ \widetilde{\alpha}(\g) \ = \ \psi(\g_{11})\, , \ \ \ \ \ \g \, = \, (\g_{ij})\,.
\end{equation}
The reason for the inverse is to ensure $\widetilde{\alpha}(\widetilde{\g})=\alpha(\g)$, a property used below in
(\ref{twoeis}).  These characters are respectively trivial on the subgroups $\G_1(N)\subset \G_0(N)$ and
$\widetilde \G_1(N)\subset \widetilde \G_0(N)$, which are defined by the congruence $\g_{nn}\equiv 1\imod N$ in the former
case, and $\g_{11}\equiv 1 \imod N$ in the latter case.

We let $\G=\G_0(N)$  and   $\G_\infty = \G \cap w_{\text{long}} P w_{\text{long}}$ denote its  isotropy subgroup at
$w_{\text{long}}P \in Y$.  Because $-e\in \G_\infty$, we insist that $\psi(-1)=(-1)^\e$ so that  $\G_\infty$  acts
trivially on $\d_e$.  (Otherwise the Eisenstein series we presently define would be identically zero.)   With this
choice of parity parameter
 define
\begin{equation}
\label{eisen1} E_{\nu,\psi} \ \  = \  \   L(\nu+\textstyle{\f{n}{2}},\psi) \  {\sum}_{\g\in
\G/\G_\infty} \, \alpha(\g)  \, \ell(\gamma)\d_\infty  \  \in \   W_{\nu,\e }^{-\infty}\,.
\end{equation}
 For $\Re \nu > \rho_{\text{mir}}=n/2$
this sum converges in the strong distribution topology.   In the region $\{\,\Re\nu> \rho_{\text{mir}}\,\}$,\,\ the
resulting distribution vector depends holomorphically on $\nu$ and satisfies the condition $\ell(\g)E_{\nu,\psi} =
\alpha(\g)\i E_{\nu,\psi}$ for all $\g\in \G$.   Entirely analogously, with
$\widetilde{\G}=\widetilde{\G}_0(N)$,
\begin{equation}
\label{eisen2} \widetilde E_{\nu, \psi } \ \  =  \ \  L(\nu+\textstyle{\f{n}{2}},\psi) \
{\sum}_{\g\in \widetilde{\G}/ \widetilde{\G}_{\widetilde\infty}} \, \widetilde{\alpha}(\g) \,
\ell(\gamma)\d_{\widetilde\infty}\  \in\   \widetilde
W_{\nu, \e }^{-\infty}
\end{equation}
converges and depends holomorphically on $\nu$ in $\{\,\Re\nu> \rho_{\text{mir}}=n/2\,\}$. The two Eisenstein series are
related by the involution (\ref{ab9}):
\begin{equation}\label{twoeis}
E_{\nu, \psi }(g) \ \ = \ \ \widetilde{E}_{\nu, \psi}(\widetilde{g})\, .
\end{equation}
The following proposition gives a simpler formula for these Eisenstein distributions when restricted to the open,
dense Bruhat cells $U, w_{\text{long}}   U \subset Y$, and $\widetilde{U},  w_{\text{long}}   \widetilde{U}\subset \widetilde{Y}$,
respectively.  Since both Eisenstein series are  invariant under a congruence group, and the
 translates   of any of  these cells by that invariance group cover $Y$ and $\widetilde{Y}$,
respectively,  restriction to either determines   them completely.  The statement involves the finite Fourier transform
\begin{equation}\label{fft}
    \widehat\psi(m)  \ \ =  \ \ \sum_{a\imod N}\psi(a)\,e(\smallf{a m}{N})
\end{equation}
of a Dirichlet character of modulus $N$.  (Note that $\psi(a)=0$ when $a$ is not relatively prime to $N$.)

\begin{prop}\label{eisonu}   {\rm (Analytic continuation and Fourier expansion of mirabolic
Eisenstein distributions.)}     Let $\Re\nu>n/2$.  The restriction of the distribution $E_{\nu,  \psi  }$ to $U$ as well as the restriction of the distribution $\widetilde{E}_{\nu,\psi}$ to $\widetilde U$ are determined by the common formulas
\[
\aligned
&    E_{\nu,  \psi   }\left(
\begin{smallmatrix}1& -u_{n-1} & \cdots & -u_{1} \\
  & { }_{\scriptstyle{1}} & \ \  { }_{{ }_{\textstyle{0}}} &  \\
{ }^{\textstyle{0}} & & \ \ \ddots &  \\
 &  &  & 1
 \end{smallmatrix}
\right) \ \  = \ \ \widetilde{E}_{\nu,  \psi } \left(
    \begin{smallmatrix}{ }_{\scriptstyle{1}} &  &  &
    {}_{u_1} \\
 & \ddots  & {}^{\textstyle{0}} & \vdots  \\
 { }^{\textstyle{0}}  &   & 1 & {u_{n-1}}  \\
 &  &   &   1
\end{smallmatrix}
\right)
 \qquad\qquad\qquad\qquad\qquad\qquad\qquad\qquad\vspace{4pt}
\\& \  =\ \
{\sum}_{\stackrel{\scriptstyle v \, \in \, \Z^n\!,\ {\scriptstyle v_1
\, >  \, 0}}{N\mid v_1,\ldots,v_{n-1}\ }}\ \ \psi(v_n)\, v_1^{\ -\nu-n/2}\, \d_{v_n/v_1}(u_1)\cdots
\d_{v_{2}/v_1}(u_{n-1})
\\
& \ = \ \ N^{-\nu-n/2}\sum_{v\in \Z^n, \ v_1>0} v_1^{-\nu-n/2+1} e(v_1v_n u_1) \d_{v_{n-1}/v_1}(u_2)\cdots
\d_{v_2/v_1}(u_{n-1}) \, \widehat\psi(-v_n)
\\
&\  = \ \
\sum_{r\,\in\,\Z^{n-1}} a_r \, e(r_1u_1+\cdots +r_{n-1}u_{n-1})\,,
\endaligned
\]
where
\[ \ a_r \ = \
N^{-\nu-n/2} \sum_{\stackrel{\scriptstyle{d\,>\,0}}{d
\, | \, r_1,\ldots,r_{n-1}}}
      \, d^{-\nu+n/2-1}\,\widehat{\psi}(-r_1/d)\,.
\]
Their restrictions to
$w_{\text{long}}U$ and  $w_{\text{long}}\widetilde{U}$      are determined by the common formula
\[
\gathered
\big(\ell(w_{\text{long}})\,
E_{\nu,  \psi   }\big)\!\left(
\begin{smallmatrix}1& u_{n-1} & \cdots & u_{1} \\
  & { }_{\scriptstyle{1}} & \ \  { }_{{ }_{\textstyle{0}}} &  \\
{ }^{\textstyle{0}} & & \ \ \ddots &  \\
 &  &  & 1
 \end{smallmatrix}
\right) \ \  =  \ \ \big(\ell(w_{\text{long}})\,
\widetilde{E}_{\nu,  \psi }\big)\! \left(
    \begin{smallmatrix}{ }_{\scriptstyle{1}} &  &  &
    {}_{u_1} \\
 & \ddots  & {}^{\textstyle{0}} & \vdots  \\
 { }^{\textstyle{0}}  &   & 1 & {u_{n-1}}  \\
 &  &   &   1
\end{smallmatrix}
\right)  \\
 \qquad\qquad\qquad\qquad\qquad\qquad\qquad\qquad\vspace{4pt}
\\ =\ \ {\sum}_{\stackrel{\scriptstyle v \, \in \, \Z^n\!,\ {\scriptstyle v_n
\, >  \, 0}}{N\mid v_1,\ldots,v_{n-1}\ }}\ \ \psi(v_n)\, v_n^{\ -\nu-n/2}\, \d_{v_1/v_n}(u_1)\cdots
\d_{v_{n-1}/v_n}(u_{n-1}) \\
= \ \
  \sum_{r\in\Z^{n-1}} c_r \,   e\(\textstyle{\f{r_1u_1 \,+\, \cdots  \, + \,
r_{n-1}u_{n-1}}{N}}\)  \,,    \endgathered
 \]
 where \[  \  c_r  \, =  \,  \f{1}{N^{n-1}} \sum_{\stackrel{\scriptstyle{d\,>\,0}}{d
\, | \, r_1,\ldots,r_{n-1}}}
    \psi(d)   \, d^{-\nu+n/2-1}\,.
\]
These sums, and hence   also both $E_{\nu,  \psi  }$ and $\widetilde{E}_{\nu, \psi }$, can be holomorphically
continued   to $\C - \{n/2\}$.  They are entire if $\psi$ is nontrivial, and have a simple pole at
$\nu=n/2$ otherwise.
\end{prop}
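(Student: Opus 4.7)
The plan is to directly unfold the defining series for $E_{\nu,\psi}$, compute its restriction to the open Bruhat cell $U\subset Y$ as a sum of deltas, Poisson-summate to obtain the Fourier series, and then read off the analytic continuation from the resulting finite-sum coefficients; the restriction to $w_{\text{long}}U$ proceeds in parallel, and the two families of formulas are compatible via (\ref{twoeis}). First I would parameterize $\G/\G_\infty$ geometrically: identifying $Y=G/P$ with the projective space of hyperplanes in $\R^n$, the point $\gamma w_{\text{long}}P$ corresponds to the hyperplane whose normal is the bottom row $v=(v_1,\dots,v_n)$ of $\gamma^{-1}$. For $\gamma\in\G_0(N)$ this row automatically satisfies $N\mid v_1,\dots,v_{n-1}$, and together with primitivity this forces $\gcd(v_n,N)=1$; moreover $\gamma\gamma^{-1}=I$ gives $v_n\equiv \gamma_{nn}\i\pmod N$, so $\alpha(\gamma)=\psi(\gamma_{nn})\i=\psi(v_n)$. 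The coset lands in $U$ iff $v_1\neq 0$; the representative choice $v_1>0$ is consistent with the parity hypothesis $\psi(-1)=(-1)^\e$.

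Next, for $\gamma$ with $v_1\neq 0$ I would carry out the Bruhat-type decomposition $\gamma w_{\text{long}}=m(u_0)\,p$ with $m(u_0)\in U$ and $p\in P$; solving this linear system yields $u_0=(v_n/v_1,\dots,v_2/v_1)$ and top-left entry $a=1/v_1$, via the identity $\sum_k v_k\gamma_{kn}=1$. The crucial analytic step is computing $\ell(p)\d_e$: because $\d_e\in W_{\nu,\e}^{-\infty}$ is defined by its pairing against test vectors in the contragredient $W_{-\nu,\e}^\infty$, its transformation under $P$ is governed by the \emph{dual} isotropy character, giving $\ell(p)\d_e=|a|^{\nu+n/2}(\sgn a)^\e\d_e=|v_1|^{-\nu-n/2}(\sgn v_1)^\e\d_e$. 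Since translation by $m(u_0)$ is pure translation in $U$, $\ell(\gamma)\d_\infty|_U=|v_1|^{-\nu-n/2}(\sgn v_1)^\e\d_{u_0}$. The $L$-function prefactor $L(\nu+n/2,\psi)=\sum_{d>0}\psi(d)d^{-\nu-n/2}$ then unfolds the primitive-$v$ sum to the full sum over $v\in\Z^n$ in the first formula; the condition $N\mid v_1,\dots,v_{n-1}$ is preserved under $v\mapsto dv$ because $\psi(d)=0$ unless $\gcd(d,N)=1$.

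To pass to the Fourier series I would Poisson-summate slot by slot. For $i=2,\dots,n-1$ the sum over $v_{n+1-i}\in N\Z$ of $\d(u_i-v_{n+1-i}/v_1)$ is a Dirac comb on $(N/v_1)\Z$, Poisson-expanding as $(v_1/N)\sum_{r_i}e(r_iu_iv_1/N)$; for the $u_1$-slot, splitting $v_n\in\Z$ into residue classes modulo $N$ and Poisson-summing each recovers the finite Fourier transform $\widehat\psi$, yielding $(v_1/N)\sum_{r_1}\widehat\psi(-r_1)e(r_1u_1v_1/N)$. Substituting $v_1=Nd$ and collecting by exponent identifies the Fourier coefficient $a_r$ as the stated finite sum. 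The formula on $w_{\text{long}}U$ is obtained by the parallel decomposition $\gamma w_{\text{long}}=w_{\text{long}}\,m(\tilde u_0)\,\tilde p$, valid iff $v_n\neq 0$, which produces top-left entry $1/v_n$ and delta at $\tilde u_0$; now Poisson is applied to the lattice slots $v_1,\dots,v_{n-1}\in N\Z$ while $v_n$ is summed directly with weight $\psi(v_n)v_n^{-\nu-n/2}$, producing the coefficients $c_r$.

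Analytic continuation is immediate from this Fourier description: each $a_r$ and $c_r$ with $r\neq 0$ is a finite sum of powers $d^{-\nu+n/2-1}$ (only finitely many $d>0$ divide the fixed nonzero $r_i$'s), hence entire in $\nu$, and the coefficients grow only polynomially in $|r|$, so the Fourier series defines a distribution for all $\nu$. The only possible pole comes from the $r=0$ term: $a_0\propto\widehat\psi(0)\,\zeta(\nu-n/2+1)$ vanishes when $\psi$ is nontrivial (as $\widehat\psi(0)=\sum_a\psi(a)=0$) and contributes a simple pole at $\nu=n/2$ when $\psi$ is trivial; the same analysis applies to $c_0$. The main obstacle I anticipate is the delta transformation-law step---specifically, correctly identifying that $\d_e$ transforms under $P$ via $\chi_{-\nu-\rho_{\text{mir}},\e}$ rather than $\chi_{\nu-\rho_{\text{mir}},\e}$, accounting simultaneously for the fiber action and the distributional Jacobian---since once this is in hand, the Poisson summation and coset bookkeeping are essentially routine.
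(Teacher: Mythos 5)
Your proposal is correct and follows essentially the same path as the paper's proof: the paper works with $\widetilde E_{\nu,\psi}$ and parameterizes cosets by last columns of $\g$ where you work with $E_{\nu,\psi}$ and bottom rows of $\g^{-1}$ (the two pictures are exchanged by (\ref{twoeis})), it obtains the factor $|v_1|^{-\nu-n/2}$ from the transformation law (\ref{mira6explicated}) combined with the $\delta$-function Jacobian exactly as your dual-character computation of $\ell(p)\,\d_e$ does, and its extraction of $a_r$ and $c_r$ by integrating the periodic distribution over a period box is the same calculation as your slot-by-slot Poisson summation of Dirac combs. The one step you leave implicit, which the paper records in a closing sentence, is that the holomorphic continuation of the restrictions to the open cells propagates to the full distributions on $Y$ and $\widetilde Y$ because the $\G_0(N)$-translates (respectively $\widetilde\G_0(N)$-translates) of those cells cover the flag varieties.
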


{\noindent \bf Proof:} Because of the relation (\ref{twoeis}) and the visible transformation properties of the asserted formulas, the formulas for $E_{\nu,\psi}$ and $\widetilde{E}_{\nu,\psi}$ are equivalent.  We shall thus work with  $\widetilde{E}_{\nu,\psi}$, first deriving the formulas as sums of $\d$-functions,  then the alternative expressions in terms of Fourier series, and finally deduce the meromorphic continuation from these.

We begin with the second set of formulas, for the restriction to $w_{\text{long}}\widetilde{U}$.
 Letting
$\G$ instead stand for $w_{\text{long}}\widetilde{\G}_0(N) w_{\text{long}}$, the expression for
$\widetilde{E}_{\nu, \psi } \in \widetilde W_{\nu, \e }^{-\infty}$ in (\ref{eisen2}) may be rewritten as
\begin{equation}\label{eisen2rewr}
   \ell(w_{\text{long}})  \widetilde{E}_{\nu, \psi } \ \ = \ \ L(\nu+{\textstyle\f{n}{2}},\psi)  \, \sum_{\g \,
     \in \,
    \G/\G\cap \widetilde{P}}   \widetilde{\alpha}(w_{\text{long}}\,\g\, w_{\text{long}}) \,  \ell(\g) \,\d_{\tilde{e}}\,.
\end{equation}
The last column of a matrix is unchanged, up to sign, after right multiplication by an element of $\G\cap
\widetilde{P}$.  Moreover, every $n$-tuple of relatively prime integers occurs as the last column of some matrix in
$GL(n,\Z)$.  Its subgroup $\G$ is defined by the congruence that all entries except for the final one  in its last
column are divisible by $N$. Therefore, the
 cosets $\G/\G\cap \widetilde{P}$ are in bijective
correspondence with the set
\[
\{\text{vectors $v=(v_1,\ldots,v_n)\in \Z^n$ with $GCD(v)=1$ and $N|v_1,\ldots,v_{n-1}\}/\{\pm 1\}$}.
\]
Given $v\in \Z^n$ whose entries are relatively prime  and satisfy the above divisibility condition, we let $\g_v$
denote a coset representative in $\G/\G\cap \widetilde{P}$.

When (\ref{eisen2rewr}) is restricted to $\widetilde{U}$, some of the terms in the sum on the right hand side
vanish because the $\g$-translate of $\d_{\tilde{e}}$ does not lie in the big cell. The nonvanishing terms are
precisely those for which $\g\in \G\subset G$ projects into the big cell $\widetilde{U}\subset
\widetilde{Y}=G/\widetilde{P}$.  A matrix whose final column is the vector $v$ projects to the big cell
$\widetilde{U}$ if and only if its last entry is nonzero; in this situation, applied to $\g_v$, we have the
explicit matrix decomposition
\begin{equation}\label{projbigcell}
    \g_v \ \ = \ \ \ttwo{I}{u}{}{1} \ttwo{A}{}{\star}{v_n}\, ,
\end{equation}
where $u = \f{1}{v_n}(v_1,\ldots,v_{n-1})\in \R^{n-1}$ and $A$ is a matrix with determinant $\pm 1/v_{n}$.
Therefore the range of summation in (\ref{eisen2rewr}) is in bijection with
\begin{equation}\label{cosetrepsinsum}
\{v=(v_1,\ldots,v_n)\in\Z^n\,\ \text{with~}GCD(v)=1,\, N\mid v_1,\ldots,v_{n-1},\text{~and~}v_n>0\}.
\end{equation}
The decomposition (\ref{projbigcell}) allows us to compute the following action of $\g_v\i$ on the delta function
$\d_{\left(\f{v_1}{v_n},\ldots,\f{v_{n-1}}{v_n}\right)}$ on $ \widetilde U $:
\begin{equation}\label{deltaact1}
\aligned
\ell(\g_v\i)\,\d_{\left(\f{v_1}{v_n},\ldots,\f{v_{n-1}}{v_n}\right)}
\ \ &  = \ \ \ell\(\g_v\i \(\begin{smallmatrix}{
}_{\scriptstyle{1}} & & &
    {}_{v_1/v_{n}} \\
 & \ddots  & {}^{\textstyle{0}} & \vdots  \\
 { }^{\textstyle{0}}  &   & 1 & {v_{n-1}/v_n}  \\
 &  &   &   1
 \end{smallmatrix}\) \) \d_{\tilde{e}} \\
& = \ \ \ell\(\ttwo A{}{\star}{v_n} \i\)\d_{\tilde{e}} \qquad\qquad (\det{A} \,=\,\pm  1/v_n) \\
& = \ \  (\sgn v_n)^\e \,|v_n|^{\nu+n/2}\,\d_{\tilde{e}} \,.
 \endaligned
\end{equation}
In this last equation, the transformation rule (\ref{mira6explicated}) has provided a factor of $(\sgn v_n)^\e\\
|v_n|^{\nu-n/2}$, while the $\d$-function identity $\d_{\tilde{e}}(\f{Au}{v_n})=|v_n|^n\d_{\tilde{e}}(u)$ is
responsible for the rest of the exponent.  Using $\widetilde{\alpha}(w_{\text{long}} \g
w_{\text{long}})=\psi(v_n)$, the summand for $\g_v$ in (\ref{eisen2rewr}) can be written as
\begin{equation}\label{lgvaction}
  \widetilde{\alpha}(w_{\text{long}}\,\g_v\, w_{\text{long}}) \, \ell(\g_v) \, \d_{\tilde{e}} \ \ = \ \   \psi(v_n)\,(\sgn v_n)^\e  \, |v_n|^{-\nu-n/2}\,\d_{\left(\f{v_1}{v_n},\ldots,\f{v_{n-1}}{v_n}\right)}\,.
\end{equation}
Summing this expression over the coset representatives from (\ref{cosetrepsinsum}) gives,
 in terms of the coordinates $(u_1,\ldots,u_{n-1})$
on $\widetilde{U}$ in the second set of statements in the proposition,  an expression similar to the one claimed there for
$\ell(w_{\text{long}})\widetilde{E}_{\nu,\psi}$.  They differ only in that the latter has no condition on $GCD(v)$.   However, the first
set consists of scalar multiples, by positive integers relatively prime to $N$, of the second set, and
multiplication by the pre-factor  $L(\nu+\f n2 ,\psi)$   in (\ref{eisen2rewr}) -- unused until now --  accounts for
the discrepancy. (Note that terms for which $(v_n,N)>1$ vanish.)

At this point, we have established the $\d$-function formula  for the restriction of
$\ell(w_{\text{long}})\widetilde{E}_{\nu,\psi}$ to $\widetilde U$, and therefore also the one for the restriction of $\ell(w_{\text{long}})E_{\nu,\psi}$ to $U$, to which it is equivalent.  Had we instead considered the series $\widetilde{E}_{\nu,\psi}$ instead of its $w_{\text{long}}$-translate, the last column of $\g$ would have entries $(v_n,\ldots,v_1)$, the reverse of the situation we encountered above.  The identical reasoning produces the same formula, but with $v_j$ replaced by $v_{n+1-j}$ in the summand -- exactly the first claim of the proposition.

Next we turn to the assertions about the Fourier expansions, starting first with the
common expression  for the $w_{\text{long}}$ translates.  It is periodic in each $u_i$ with period $N$, so the coefficient $c_r$ is computed by the
integral
\begin{equation}
\label{crcalc1}
\begin{aligned}
&   \f{1}{N^{n-1}}   \int_{(  N   \Z\backslash \R)^{n-1}}{\sum}_{\stackrel{\scriptstyle v \, \in \, \Z^n\!,\ {\scriptstyle v_n
\, >  \, 0}}{N\mid v_1,\ldots,v_{n-1}\ }}\ \ \psi(v_n)\,   v_n^{\,-\nu-n/2} \ e\!\(  -\f{\sum_{i=1}^{n-1}r_i u_i}{N}  \) \times
\\
&\qquad\qquad\qquad \times \,\d_{v_1/v_n}(u_1)\cdots
\d_{v_{n-1}/v_n}(u_{n-1})\,du_1\cdots du_{n-1} \\
& \ \ \  =   \f{1}{N^{n-1}}   \  \sum_{v_n \, > \, 0}\ \ \sum_{\stackrel{\scriptstyle v_1,\ldots,v_{n-1} \,
\in \, \Z/  N   v_n \Z}{   N\mid v_1,\ldots,v_{n-1}}}  \psi(v_n) \,  v_n^{\,-\nu-n/2} \ e\!\(
-\f{\sum_{i=1}^{n-1}r_i v_i}{  N   v_n}\) \\
& \ \ \ =   \f{1}{N^{n-1}}   \  {\sum}_{d \, > \, 0} \ \ {\sum}_{v_1,\ldots,v_{n-1}
\, \in \, \Z/d \Z} \ \ \psi(d) \,  d^{-\nu-n/2} \ e\!\(\f{\sum_{i=1}^{n-1}r_i v_i}{d}\).
\end{aligned}
\end{equation}
  The sum over any fixed  $v_j$, for $1\le j \le n-1$, equals $d$ if $d|r_j$, and zero otherwise.
Therefore $c_r$ is given by the formula stated in the proposition.  The formula for $a_r$ is computed by the same procedure.  The hybrid formula for the restriction $E_{\nu,\psi}$ or $\widetilde{E}_{\nu,\psi}$ which involves a Fourier series in $u_1$, and $\d$-functions in the other variables, is proven by taking a Fourier integral only in the variable $u_1$, and leaving the other $u_j$ alone.

Finally we come to the analytic continuation, which is equivalent for each of the expressions involved.  We therefore consider the last formula in the statement of the proposition.   The coefficient $c_r$ equals a finite sum which is entire in $\nu$,
unless
 $r = (0,0,\ldots,0)$.  In this exceptional case\, $c_0 = N^{1-n}   L(\nu-n/2+1,\psi)$,
 which is entire   for all nontrivial characters $\psi$, and has a simple pole at  $\nu=n/2$ when $\psi$ is trivial.
This establishes  the asserted meromorphic continuation of the restriction of the Eisenstein series $E_{\nu,  \psi
}$ to  the open Schubert cell   $w_{\text{long}}U$.  Since $E_{\nu,\psi}$ is automorphic under $\G_0(N)$, and the
$\G_0(N)$-translates of $w_{\text{long}}U$ cover $Y=G/P$, the continuation is valid on all of $Y$.  Likewise, the
identical meromorphic continuation applies to $\widetilde{E}_{\nu,\psi}$ because of (\ref{twoeis}). \bx

We have now shown the analytic continuation of the mirabolic Eisenstein distributions. We next turn to
their functional equations.   The two degenerate principal series representations (\ref{mira6}) are
related by the {\it standard intertwining operator}
\begin{equation}
\label{mira7} I_\nu\ : \ W_{-\nu,\e}^\infty \ \longrightarrow\
\widetilde W_{\nu,\e}^\infty\,,
\end{equation}
defined in terms of the realization by $C^\infty$ functions by the integral
\begin{equation}
\label{mira8} \left( I_\nu f\right)(g)\ = \ \int_U f(g
\,w_{\text{long}}\, u)\,du\,;
\end{equation}
recall the definition of $w_{\text{long}}$ in (\ref{ab9}). It is well known that the integral converges
absolutely\footnote{For the sake of notational simplicity we are dropping the subscript $\e$ for
$I_{\nu}$\,,\,\ since the action of the intertwining operator affects only $\nu$, not $\e$. } for $\,\Re
\nu
>  n/2-1$, and we shall also see this directly. Two properties of $I_\nu$ are crucial for our purposes:
\begin{equation}
\label{mira9}
\begin{aligned}
&\!\!\!\text{a)\ \ $I_\nu$ has a meromorphic continuation to all
$\nu\in\C$\,, and}
\\
&\!\!\!\text{b)\ \ it extends continuously to a linear operator
$I_\nu : W_{-\nu,\e}^{-\infty} \, \rightarrow\, \widetilde
W_{\nu,\e}^{-\infty}$\,;}
\end{aligned}
\end{equation}
see \cite{knapp} for the former, and \cite{Casselman:1989} for the latter.

We now give an explicit formula for the action of $I_\nu$ in terms of the restriction of $C^\infty$ functions to
the open Schubert cells $\,U\subset G/P$, $\,\widetilde{U}\subset G/\widetilde{P}$, for $\nu$ in the range of
convergence -- i.e., for $\Re \nu > n/2 -1$.

\begin{prop}\label{expltwineprop}
Let $f\in W_{-\nu,\e}^\infty\,$, and regard $f$ as a function on $U\cong \R^{n-1}$ via its restriction to $U$ and
the identification
\begin{equation*}
\R^{n-1} \ni  \ x  \ \mapsto \  u(x)\ \ =_{\rm{def}} \ \ \left(
\begin{smallmatrix}1& x_{n-1} & \cdots & x_{1} \\
  & { }_{\scriptstyle{1}} & \ \  { }_{{ }_{\textstyle{0}}} &  \\
{ }^{\textstyle{\ 0}} & & \ \ \ddots &  \\
 &  &  & 1
 \end{smallmatrix}
\right) \  \in  \ U\,.
\end{equation*}
Similarly, regard $I_\nu f \in \widetilde{W}_{\nu,\e}^\infty$ as a function on $\widetilde U\cong \R^{n-1}$ via the
identification\begin{footnote}{The minus signs are necessary to make (\ref{ab9}) consistent with
(\ref{twoeis}).}\end{footnote}
\begin{equation*}
\R^{n-1} \ni  \ y  \ \mapsto \ \widetilde u(y)\ \ =_{\rm{def}} \ \
\left(
    \begin{smallmatrix}{ }_{\scriptstyle{1}} &  &  &
    {}_{-y_1} \\
 & \ddots \ \  & {}^{\textstyle{0}} & \vdots  \\
 { }^{\textstyle{0}}  &   & 1 & {-y_{n-1}}  \\
 &  &   &   1\ \
 \end{smallmatrix}
\right) \  \in  \ \widetilde U\,.
\end{equation*}
Then, for $\,{\rm Re}\, \nu > n/2 -1$, $(I_\nu f)\(\widetilde u(y)\) $ is given by the integral
\begin{equation*}
  \int_{z\in\R^{n-1}}
  f\(u(z)\)\left|
\textstyle \sum_{j=2}^{n-1}y_j\,z_{n+1-j} - y_1-z_1 \right|^{\nu-n/2} \sgn(
\textstyle \sum_{j=2}^{n-1}y_j\,z_{n+1-j} - y_1-z_1)^\e\,dz.
\end{equation*}
\end{prop}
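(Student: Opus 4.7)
The plan is to start from the defining integral (\ref{mira8}), parametrize the variable of integration by $u = u(z)$ with $z \in \R^{n-1}$, and combine two computations: a $UP$-decomposition of $\tilde u(y) w_{\text{long}} u(z)$ that uses the $P$-equivariance to eliminate $f$'s value off the open Schubert cell, followed by a change of variables from $z$ to the resulting $u$-parameter.

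Concretely, the matrix $M := \tilde u(y) w_{\text{long}} u(z)$ lies over the open Schubert cell in $G/P$ precisely when $z_1 \ne 0$. In that range I seek $u(w) \in U$ and $p \in P$ with $M = u(w) p$. Since left multiplication by $u(w)$ only alters the first row, the last $n-1$ rows of $p$ are forced to be those of $M$, and the condition $p \in P$ imposes that the first row $p_{1,:} = M_{1,:} - \sum_{j=2}^{n} w_{n+1-j} M_{j,:}$ vanishes in columns $2, \ldots, n$. Direct solution of this linear system yields
\[
a \ = \ -\tfrac{1}{z_1}, \qquad w_j \ = \ -\tfrac{z_{n+1-j}}{z_1} \quad (2 \le j \le n-1), \qquad w_1 \ = \ \tfrac{1 - y_1 z_1 - \sum_{i=2}^{n-1} y_i z_i}{z_1},
\]
where $a = p_{1,1}$. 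Because $|\det M| = |\det w_{\text{long}}| = 1$, the $(n-1)\times(n-1)$ bottom-right block $B$ of $p$ satisfies $|a||\det B| = 1$, so the transformation rule (\ref{mira6explicated}) applied with $\nu$ replaced by $-\nu$ gives
\[
f(M) \ = \ |a|^{\nu + n/2}(\sgn a)^\e f(u(w)) \ = \ |z_1|^{-\nu - n/2}(\sgn(-z_1))^\e f(u(w)).
\]

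Next I change the integration variable from $z$ to $w$. Inverting the preceding formulas gives $z_1 = 1/\psi$ and $z_i = -w_{n+1-i}/\psi$ for $2 \le i \le n-1$, where $\psi(y,w) := w_1 + y_1 - \sum_{k=2}^{n-1} y_{n+1-k} w_k$. The Jacobian $|\det \partial z/\partial w|$ is computed by adding $w_{n+1-i}$ times the first row to row $i$ (for $i \ge 2$) of the Jacobian matrix; after this reduction the $(1,1)$ entry is $-z_1^2$, the rest of the first column vanishes, and the bottom-right $(n-2)\times(n-2)$ block is an anti-diagonal with all entries equal to $-z_1$, giving determinant $\pm z_1^n$. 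Hence $dz = |z_1|^n \, dw = |\psi|^{-n}\, dw$. Assembling, the integrand on the $w$-side becomes $|\psi|^{\nu - n/2}(\sgn(-\psi))^\e f(u(w))\,dw$. Relabeling $w \mapsto z$ and noting that $-\psi(y,z) = \sum_{j=2}^{n-1} y_j z_{n+1-j} - y_1 - z_1$ (after the substitution $k = n+1-j$) produces exactly the claimed formula.

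The main obstacle is executing the matrix decomposition and the Jacobian calculation cleanly, given the paper's order-reversed labeling in the definition of $u(x)$, which forces the $n+1-j$ substitutions throughout. Sign tracking is also delicate: the factor $(\sgn(-\psi))^\e$ in the final answer arises from $(\sgn(-1/z_1))^\e$ together with the identity $\sgn z_1 = \sgn \psi$, so signs must be traced through the equivariance and the change of variables without cancellation errors. Convergence in the range $\Re\nu > n/2 - 1$ follows once the explicit formula is known: the singularity of $|\psi|^{\nu - n/2}$ along $\{\psi = 0\}$ is locally integrable precisely in this range, and decay at infinity is controlled by the smoothness of $f$ as a section of a line bundle over the compact flag variety $Y$.
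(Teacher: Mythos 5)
Your proof is correct and follows essentially the same route as the paper: an explicit factorization into the open cell, $\widetilde u(y)\,w_{\text{long}}\,u(z)=u(w)\,p$ with $p\in P$, the $P$-equivariance of $f\in W_{-\nu,\e}^\infty$ (giving the factor $|z_1|^{-\nu-n/2}\sgn(-z_1)^\e$ from $a=-1/z_1$), and a change of variables with Jacobian $|z_1|^{n}$ — all of which check out, signs included. The only organizational difference is that you carry out the general-$y$ decomposition in one step, whereas the paper first reduces to $y=0$ by left-equivariance of $I_\nu$ (via the identity (\ref{expltwine2})) and then treats $y=0$ with the factorization (\ref{expltwine4}) and the involution (\ref{expltwine5}).
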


\noindent {\bf Proof:}  By construction, the intertwining operator $I_\nu$ is invariant under left translation by
any $g\in G$. To establish the assertion of the proposition, it therefore suffices to establish the integral
expression for $y=0$, and then to check that it is compatible with translation from $\,\widetilde u(0)= e$ to
$\widetilde u(y)$.

First the compatibility with translation. On the one hand, $ (I_\nu f)\(\widetilde u(y)\) = \(\ell(\widetilde u(-y)
)   \(I_\nu f\)\)(e)= \(I_\nu \, \ell(\widetilde u(-y)   )  f\)(\widetilde u(0))$; on the other,
\begin{equation}\label{expltwine1}
\begin{aligned}
\int_{z\in\R^{n-1}} \ \( \ell(\widetilde u(-y)   )   f\)&(u(z)) \left|
\, z_1\,\right|^{\nu-n/2}\,\sgn(-z_1)^\e\,dz \ \ =
\\
&\ \ = \int_{z\in\R^{n-1}} \ f\( \widetilde u(y)      \cdot u(z)\)
\left| \, z_1\,\right|^{\nu-n/2}\,\sgn(-z_1)^\e\,dz\,.
\end{aligned}
\end{equation}
Since
\begin{equation}\label{expltwine2}
\widetilde u(y) \cdot u(z)\ = \  \left(
\begin{smallmatrix}1& z_{n-1} & z_{n-2} & \cdots & z_2 & \tilde z_1 \\
 & { }_{\scriptstyle{1}} & { }_{\scriptstyle{0}} & { }_{\scriptstyle{\cdots}} &  { }_{\scriptstyle{0}} &  { }_{\scriptstyle{0}} \\
 & { }_{{ }_{\textstyle{\ 0}}}&  \ddots  & & & \vdots \\
 & & & & 1 & 0 \\
 &  &  & & & 1
 \end{smallmatrix} \right)
 \left(
\begin{smallmatrix}1& 0 & 0 & \cdots & 0 & 0\\
 & { }_{\scriptstyle{1}} & { }_{\scriptstyle{0}} & { }_{\scriptstyle{\cdots}} &  { }_{\scriptstyle{0}}  &  { }_{\scriptstyle{-y_2}}  \\
 & { }_{{ }_{\textstyle{\ 0}}}& \ \ \ddots \ \ & \ddots & & \vdots \\
 & & & & 1 & -y_{n-1} \\
 &  &  & & & 1
 \end{smallmatrix} \right)\,,
 \end{equation}
with $\tilde z_1= z_1 - y_1 + \sum_{2\leq j \leq n-1}\,z_j y_{n+1-j}$, the transformation law
(\ref{mira6explicated}) implies that the integral (\ref{expltwine1}) coincides with the integral in the
proposition.

At this point, it suffices to treat the case $y=0$. According to the definition of the intertwining operator,
\begin{equation}
\label{expltwine3}
\begin{aligned}
 & \!\!\!\!\!\!\! \!\!\!  (I_\nu f)\left(\widetilde u(0) \right) \ = \ \int_{z\in\R^{n-1}}
\, f(w_{\text{long}}\,u(z))\,dz
\\
&= \ \ \int_{z\in \R^{n-1}}\, f(\textstyle u(\f {1}{z_1},
\f{-z_{n-1}}{z_1},\dots, \f{-z_3}{z_1}, \f{-z_2}{z_1}))\, \left|\,
z_1  \,\right|^{-\nu - n/2}\,\sgn(-z_1)^\e\,dz
\\
&= \ \ \int_{z\in \R^{n-1}}\, f(u(z))\, \left|\, z_1
\,\right|^{\nu - n/2}\,\sgn(-z_1)^\e\,dz \,;
\end{aligned}
\end{equation}
at the second step, we have used the transformation law (\ref{mira6explicated}) and the matrix identity
\begin{equation}
\label{expltwine4} w_{\text{long}}\,u(z)\ = \  \left(
\begin{smallmatrix} 1& -z_2/z_1 & \cdots & -z_{n-1}/z_1 & 1/ z_1 \\
 & { }_{\scriptstyle{1}}  &  { }_{\scriptstyle{0}}&  &  { }_{\scriptstyle{0}} \\
 & { }_{{ }_{\textstyle{\ 0}}}&  \ddots  &\ddots & \vdots \\
 & & &  1 & 0 \\
 &  &  & & { }_{\scriptstyle{1}}
 \end{smallmatrix} \right)
 \left(
\begin{smallmatrix}
-1/z_1& 0 &  \cdots & 0 & 0 \\
{ }_{\scriptstyle{0}} & { }_{\scriptstyle{0}} & { }_{\scriptstyle{\cdots}}&  { }_{\scriptstyle{1}} &  { }_{\scriptstyle{0}}    \\
{ }^{\scriptstyle{\vdots}} &  & \!\!\! { }^{\scriptstyle{\iddots}}  & & { }^{\scriptstyle{\vdots}}  \\
0 & \!\! 1 & 0 & \cdots & 0 \\
1 & z_{n-1}  & z_{n-2}  & \cdots & z_1
 \end{smallmatrix} \right)\,,
 \end{equation}
and at the third step, the change of variables
\begin{equation}
\label{expltwine5} (z_1, z_2, \dots, z_{n-1})\ \mapsto\
(\textstyle \f {1}{z_1}, \f{-z_{n-1}}{z_1},\dots, \f{-z_3}{z_1},
\f{-z_2}{z_1})\,.
\end{equation}
The identity (\ref{expltwine3}) completes the proof of the proposition. \bx

The identity (\ref{expltwine4}) and the transformation law (\ref{mira6explicated}) directly imply a simple
estimate: along the line $\{x_2 = x_3 = \dots = x_{n-1}=0\}$, any $f\in W_{-\nu,\e}^\infty$ satisfies the bound
$|f(u(x))| = O(\|x\|^{-\Re \nu-n/2})$ as $\|x\|\to \infty\,$; the implied constant depends on a bound for
$\ell(w_{\text{long}})f$ on a neighborhood of the origin. We consider $SO(n-1)$ as a subgroup of $GL(n)$ by
embedding it into the  bottom right corner. Then $SO(n-1)$ acts transitively, by conjugation, on the set of lines
in $\,\R^{n-1}\cong U$.  By compactness, the translates $\ell(w_{\text{long}}m)f$, for $m\in SO(n-1)$, are
uniformly bounded on bounded subsets of $\,\R^{n-1}\cong U$. Since $f\in W_{-\nu,\e}^\infty$ is invariant under
right translation by elements of $SO(n-1)$, the estimate we gave holds not on just a single line, but globally on
$U$:
\begin{equation}
\label{expltwine6} f\in W_{-\nu,\e}^\infty\ \ \Longrightarrow \ \
\|f(u(x))\| \, = \, O(\|x\|^{-\Re\nu-n/2})\, \ \text{as}\,\
\|x\|\to \infty  \,.
\end{equation}
This bound and its derivation are valid for all $\nu\in\C$. When $\,\Re\nu > n/2-1$, it implies the convergence of
the integral (\ref{expltwine3}), both near the origin and at infinity. Since $I_\nu$ is $G$-invariant, we have
established that the integral (\ref{mira8}) does converge for $\Re\nu > n/2-1$ and any $g\in G$, as was mentioned
earlier.

In complete  analogy to $I_\nu : W^\infty_{-\nu,\e} \to \widetilde W_{\nu,\e}^\infty$ in (\rangeref{mira7}{mira8}),
one can define   the operator $\widetilde I_\nu : \widetilde W^\infty_{-\nu,\e} \to W_{\nu,\e}^\infty$; this
involves integrating over $\widetilde U$ instead of $U\!$. Then $I_\nu$, $\widetilde I_\nu$   are dual to
each other, in the sense that
\begin{equation}
\label{expltwine7}
\begin{aligned}
\int_{\widetilde U} I_\nu f_1(\widetilde u)\, \widetilde {f_2}
(\widetilde u)\,d\widetilde u \ &= \ \int_U f_1(u)\, \widetilde
I_\nu \widetilde f_2(u)\,du \,,
\\
&\text{for all}\ \  f_1 \in W_{-\nu,\e}^\infty\, \ \text{and}\,\
\widetilde {f_2}\in \widetilde W_{-\nu,\e}^\infty\,;
\end{aligned}
\end{equation}
the integrals on the two sides implement the natural $G$-equivariant pairings between $\widetilde
W_{\nu,\e}^\infty$ and $\widetilde W_{-\nu,\e}^\infty$, respectively $W_{-\nu,\e}^\infty$ and $W_{\nu,\e}^\infty$.
For $\Re \nu > n/2-1$, i.e., when the integrals defining $I_\nu$ and $\widetilde{I_\nu}$ converge, the  identity
follows from the explicit formula for $I_\nu$ in proposition \ref{expltwineprop} and the analogous formula for
$\widetilde I_\nu$. Meromorphic continuation implies the identity for other values of $\nu$.

Since $I_\nu$ extends continuously to $I_\nu : W_{-\nu,\e}^{-\infty} \to \widetilde W_{\nu,\e}^{-\infty}$, the
identity (\ref{expltwine7}) implies a concrete description of the effect of $I_\nu$ on distribution vectors,
\begin{equation}
\label{expltwine8}
\begin{aligned}
\int_{\widetilde U} I_\nu \tau (\widetilde u)\, \widetilde {f}
(\widetilde u)\,d\widetilde u \ &=\ \int_U \tau(u)\, \widetilde
I_\nu \widetilde f(u)\,du \,,
\\
&\text{for all}\ \ \tau \in W_{-\nu,\e}^{-\infty},\ \widetilde {f}\in
\widetilde W_{-\nu,\e}^\infty\,.
\end{aligned}
\end{equation}
Unlike in (\ref{expltwine7}), the integrals in this identity have merely symbolic meaning: the pairings $\widetilde
W_{\nu,\e}^{-\infty} \times \widetilde W_{-\nu,\e}^\infty \to \C$ and $W_{-\nu,\e}^{-\infty} \times
W_{\nu,\e}^\infty \to \C$ involve ``integration" over $\widetilde Y=G/\widetilde P$ and $Y=G/P$, not only over the
dense open cells $\widetilde U\subset \widetilde Y$, $U\subset Y$. The integrals as written do extend naturally to
$\widetilde Y$ and $Y$.

Let $E_{1,n} \in \mathfrak g\mathfrak l(n,\R)$ denote the matrix
with the entry 1 in the $(1,n)$-slot, and zero entries otherwise.
If $f\in W_{-\nu,\e}^\infty$ and $\,\Re \nu > 1 - n/2$, the estimate
(\ref{expltwine6}) shows that the integrals
\begin{equation}
\label{expltwine9} J_{\nu} f (g)\ =_{\text{def}}\ \int_\R \, f\(g
\exp(t\,E_{1,n})\)\, dt \qquad (\,f\in W^\infty_{-\nu}\,,\ g \in
G\,)
\end{equation}
converge. For other values of $\nu$, $\nu \notin 1 - n/2 -
\Z_{\geq 0}$, the integrals still make sense by meromorphic
continuation (the unspecified integer in $\Z_{\ge 0}$ in fact has the same parity as $\e$ at any singularity). This can be seen by translating the point
$\,\lim_{t\to\infty} \exp(tE_{1,n}) P \in Y$ to the origin.

\begin{lem}\label{expltwine10}
Suppose $I_\nu : W_{-\nu,\e}^\infty \to \widetilde W_{\nu,\e}^\infty$
has no pole at $\nu$, $W_{-\nu,\e}^\infty$ and $\widetilde
W_{\nu,\e}^\infty$ are irreducible, and $\nu \notin 1 -n/2 -
\Z_{\geq 0}\,$. Then for any $f\in W_{-\nu,\e}^\infty\,$, the
integrals $J_{\nu} f (u)$ vanish for all $u\in U\,$ if and only if
$\,I_{\nu} f \in \widetilde W^\infty_\nu$, viewed as $C^\infty$
section of the line bundle $\widetilde \L_{\nu-\rho_{\text{mir}},\e}
\to\widetilde Y$, vanishes on the entire complement of $\widetilde
U$ in $\widetilde Y$.
\end{lem}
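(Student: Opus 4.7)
The plan is to translate both conditions into concrete integrals using the explicit formula of Proposition \ref{expltwineprop}, and then to link them by Fubini and Fourier analysis.

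First, I would analyze how the section $I_\nu f$ extends from $\widetilde U\cong\R^{n-1}$ to the complement $\widetilde Y\setminus\widetilde U\cong\RP^{n-2}$. Approaching the complement in the $\widetilde u(y)$-chart along $y=\lambda\eta$ with $\lambda\to\infty$ sends $\widetilde u(\lambda\eta)\widetilde P$ to the boundary point $[-\eta:0]$. Writing $\Phi(y,z)=\sum_{j=2}^{n-1}y_j z_{n+1-j}-y_1-z_1$, Proposition \ref{expltwineprop} reads $(I_\nu f)(\widetilde u(y))=\int f(u(z))|\Phi(y,z)|^{\nu-n/2}\sgn(\Phi(y,z))^\e\,dz$. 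Substituting $y=\lambda\eta$ and extracting the leading term via dominated convergence, using the decay bound (\ref{expltwine6}) for $f$, gives
\[
(I_\nu f)(\widetilde u(\lambda\eta))\ \sim\ \lambda^{\nu-n/2}\,G(\eta),\quad G(\eta)\ =\ \int_{\R^{n-1}}f(u(z))\,|L_\eta(z)|^{\nu-n/2}\sgn(L_\eta(z))^\e\,dz,
\]
with $L_\eta(z)=-\eta_1+\sum_{j=2}^{n-1}\eta_j z_{n+1-j}$. A direct comparison of line-bundle trivializations of $\widetilde\L_{\nu-\rho_{\text{mir}},\e}$ near the boundary, using (\ref{mira6explicated}), shows that the growth factor $\lambda^{\nu-n/2}$ is precisely the transition cocycle, so $G(\eta)$ equals, up to a nowhere-vanishing constant, the value of the $C^\infty$ section $I_\nu f$ at the boundary point $[-\eta:0]$. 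Hence $I_\nu f$ vanishes on the entire complement iff $G(\eta)\equiv 0$ on $\R^{n-1}$.

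The crucial point is that $L_\eta(z)$ does not involve $z_1$. Writing $z=(z_1,z')$ with $z'=(z_2,\ldots,z_{n-1})$ and using the identity $J_\nu f(u(x))=\int_\R f(u(s,x_2,\ldots,x_{n-1}))\,ds$, which is independent of $x_1$ since $U$ is abelian with $u(x)u(t,0,\ldots,0)=u(x_1+t,x_2,\ldots,x_{n-1})$, Fubini yields
\[
G(\eta)\ =\ \int_{\R^{n-2}}J_\nu f(u(0,z'))\,|L_\eta(z')|^{\nu-n/2}\sgn(L_\eta(z'))^\e\,dz'.
\]
The forward implication is now immediate: if $J_\nu f\equiv 0$ on $U$, then $G\equiv 0$ and $I_\nu f$ vanishes on the complement.

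For the converse I would establish injectivity of the generalized Radon transform $h\mapsto\bigl[\eta\mapsto\int h(z')|L_\eta(z')|^{\nu-n/2}\sgn(L_\eta(z'))^\e\,dz'\bigr]$. Since $L_\eta$ sweeps out all affine-linear forms on $\R^{n-2}$ as $\eta$ varies, a Fourier transform in the constant parameter $\eta_1$ factors the image as $C_{\nu-n/2,\e}(\xi)\cdot\widehat h(-\xi\ell)$, where $\ell\in\R^{n-2}$ encodes the linear part of $L_\eta$ and $C_{s,\e}(\xi)$ is the one-variable distributional Fourier transform of $|c|^s\sgn(c)^\e$. The irreducibility hypothesis excludes the reducibility points at which $C_{\nu-n/2,\e}$ degenerates to a combination of $\delta$-derivatives at the origin, while $\nu\notin 1-n/2-\Z_{\geq 0}$ (the polar locus of $J_\nu$) ensures $J_\nu f$ itself is well-defined; together they furnish some $\xi_0\ne 0$ with $C_{\nu-n/2,\e}(\xi_0)\ne 0$. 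Varying $\ell$ then recovers $\widehat h$ on all of $\R^{n-2}$, giving $h\equiv 0$, and since $h=J_\nu f(u(0,\cdot))$ and $J_\nu f$ is independent of the first $U$-coordinate, this is $J_\nu f\equiv 0$ on $U$.

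The main obstacle is the first step: extracting the leading-order asymptotics from Proposition \ref{expltwineprop} and correctly identifying the line-bundle transition requires a careful change of coordinates from the $\widetilde u(y)$-chart to one centered at a boundary point, together with a bookkeeping application of (\ref{mira6explicated}). The second, more delicate point is verifying that the Fourier-side argument indeed covers all $\nu$ permitted by the hypotheses of the lemma.
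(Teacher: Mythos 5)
Your route is genuinely different from the paper's. The paper proves the lemma by descending to $GL(n-1)$: it introduces a restriction operator $R_{n,\nu}$ to the top-left $GL(n-1)$ (which detects vanishing on the complement of $\widetilde U$) and an operator $A_{n,-\nu}$ built from $J_\nu$ (which detects vanishing of $J_\nu f$ on $U$), proves the commutativity $R_{n,\nu}\circ I_{n,\nu}=I_{n-1,\,\nu-1/2}\circ A_{n,-\nu}$ via the matrix identities (\ref{expltwine18})--(\ref{expltwine19}), and concludes from injectivity of the lower-rank intertwining operator, with all identities established for $\Re\nu\gg 0$ and extended meromorphically. Your boundary-asymptotics computation is a de-grouped version of the same germ (your form $L_\eta$ omits exactly the $z_1$-direction integrated by $J_\nu$, which is what those matrix identities encode), but you replace the lower-rank intertwiner by an affine Radon-type transform and Fourier inversion. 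Within its range of validity this is sound: the leading coefficient of $\lambda^{-(\nu-n/2)}(I_\nu f)(\widetilde u(\lambda\eta))$ is indeed a nonvanishing multiple of the value of the section at $[-\eta:0]$ (via a decomposition $\widetilde u(\lambda\eta)=k_\lambda\tilde p_\lambda$ with $k_\lambda$ convergent), the Fubini step and the forward implication are fine, and your claim that irreducibility kills the degenerate points $\nu-n/2\in\e+2\Z_{\geq 0}$ is correct: there $|t|^{\nu-n/2}\sgn(t)^\e$ is a polynomial and, in the model of $\widetilde W_{\nu,\e}$ as functions on $\R^n\setminus\{0\}$ homogeneous of degree $\nu-n/2$ and parity $\e$, the homogeneous polynomials of that degree form a finite-dimensional invariant subspace, so $\widetilde W_{\nu,\e}$ is reducible.

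The genuine gap is in the converse over the rest of the permitted parameter range. Your formula for $G(\eta)$ and the clean identification of the one-variable Fourier transform $C_{\nu-n/2,\e}$ require $\Re(\nu-n/2)>-1$ (local integrability of the kernel), and your derivation uses $\Re\nu>n/2$ for dominated convergence; but the hypotheses of the lemma allow, for example, purely imaginary $\nu$ for every $n\ge 3$, where $\Re(\nu-n/2)\le -1$, the displayed integral for $G(\eta)$ diverges, and at $\nu-n/2\in -1-\e-2\Z_{\geq 0}$ the family $|t|^{s}\sgn(t)^\e$ even has poles. None of these points is excluded by irreducibility or by $\nu\notin 1-n/2-\Z_{\geq 0}$ (the latter only governs the $t$-integral defining $J_\nu$), so the sentence "together they furnish some $\xi_0\neq 0$ with $C_{\nu-n/2,\e}(\xi_0)\neq 0$" does not cover all $\nu$ in the lemma. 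To close this you must do explicitly what the paper does with its "without loss of generality $\Re\nu\gg 0$" step: embed $f$ in a family holomorphic in $\nu$ (e.g.\ via the compact picture), prove the identity between the boundary values of $I_\nu f$ and the transform of $J_\nu f$ in the convergent range, continue it meromorphically with the kernel interpreted as the regularized homogeneous distribution (with the logarithmic modification at its polar points), and then verify injectivity of that regularized transform at the given $\nu$. This is doable, but it is precisely the content you deferred; the paper's diagram argument sidesteps the kernel analysis entirely because its only $\nu$-specific input is injectivity of $I_{n-1,\,\nu-1/2}$.
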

Both representations  are generically irreducible, and $I_\nu$
depends meromorphically on $\nu$, so the hypotheses are satisfied
outside a discrete set of values of the parameter $\nu$. The
automorphism {\rm (\ref{ab9})} preserves the one parameter group
$t\mapsto \exp(t\,E_{1,n})$. Since this automorphism switches the
roles of $I_\nu$ and $\widetilde I_{\nu}$,  $W_{\nu,\e}^\infty$ and $\widetilde
W^\infty_{\nu,\e}$, etc., the lemma applies analogously to $\widetilde
I_\nu$.

The explicit formula for $I_\nu f$ -- for $f\in C^\infty_c(U)$, so
that convergence is not an issue -- shows that $I_\nu$ cannot
vanish. Because of the other hypotheses of the lemma, $I_\nu$ must
then be one-to-one and have dense image. But the image is
necessarily closed \cite{Casselman:1989}, hence in the situation
of the lemma,\begin{equation} \label{expltwine11} I_\nu :
W_{-\nu,\e}^\infty \ \longrightarrow\ \widetilde W_{\nu,\e}^\infty\ \
\text{is a topological isomorphism.}
\end{equation}

\noindent{\bf Proof} of Lemma \ref{expltwine10}: The $J_\nu f(u)$
depend meromorphically on $\nu$, provided $f\in W^\infty_{-\nu,\e}$
varies meromorphically with $\nu$. Evaluation of $I_\nu f$ at any
particular point is also a meromorphic function of $\nu$. Thus,
without loss of generality, we may suppose
\begin{equation}
\label{expltwine12} \Re \nu \ \gg\ 0\,.
\end{equation}
We shall relate $I_\nu$ and $J_\nu$ to the $GL(n-1)$-analogue of
$I_\nu$. This requires a temporary change in notation: in this
proof we write $W_{n,\nu}^\infty\,$, $I_{n,\nu}\,$, etc., to
signify the dependence on $n$ (we omit the subscript $\e$ since it is fixed and does not play an essential role). We define
\begin{equation}
\label{expltwine13}
\begin{aligned}
&R_{n,\nu} : \widetilde W_{n,\nu}^\infty\ \longrightarrow\
\widetilde W_{n-1,\,\nu-1/2}^\infty\,,
\\
& \ \ (R_{n,\nu} \widetilde f )(g_1)\, = \, \left|\det
g_1\right|^{\f{n/2-\nu}{n(n-1)}} \, \widetilde f \( \!
\(\begin{smallmatrix}
0 & & & \\
\vdots & & { }^{\textstyle{1_{(n-1)\times(n-1)}}} &     \\
0 & & & \\
1 & 0  & \cdots & 0
\end{smallmatrix}\) \!
\(\begin{smallmatrix}
1 & 0 & \cdots & 0 \\
{ }_{\scriptstyle{0}} &    &     &  \\
\vdots & & { }^{\textstyle{g_1}} &  \\
0 &  & &
\end{smallmatrix}\) \! \);
\end{aligned}
\end{equation}
the fractional power of $\,|\det g_1|\,$ is necessary to relate
the transformation law (\ref{mira6}) for $\widetilde f\in
\widetilde W_{n,\nu}^\infty$ to that for $R_{n,\nu}\widetilde f\in
\widetilde W_{n-1,\,\nu-1/2}^\infty\,$. The first matrix factor in
the argument of  $\widetilde f$ makes this restriction operator
$GL(n-1)$-invariant relative to the tautological action on
$\,\widetilde W_{n-1,\,\nu-1/2}^\infty\,$ and the action on
$\,\widetilde W_{n,\nu}^\infty\,$ via the embedding
$GL(n-1)\hookrightarrow GL(n)$ into the top left corner. This top
left copy of $GL(n-1)$ acts transitively on the complement of
$\widetilde U$ in $\widetilde Y$, hence
\begin{equation}
\label{expltwine14} R_{n,\nu} \widetilde f\, \equiv\, 0 \ \ \
\Longleftrightarrow\ \ \ \widetilde f\, \ \text{vanishes on the
complement of $\,\widetilde U$ in $\,\widetilde Y$.}
\end{equation}
Next we define
\begin{equation}
\label{expltwine15}
\begin{aligned}
&A_{n,\nu} : W_{n,\nu}^\infty\ \longrightarrow\
W_{n-1,\,\nu+1/2}^\infty\,,
\\
&\qquad\qquad \(A_{n,\nu} f\)\! (g_1)\, = \, \left|\det
g_1\right|^{\f{n/2+\nu}{n(n-1)}} \,  J_{n,\nu} f \!
\(\begin{smallmatrix}
 & & & 0 \\
 & { }^{\textstyle{g_1}} &  & \vdots   \\
 & & & 0 \\
0 & \cdots & 0 &1
\end{smallmatrix}\).
\end{aligned}
\end{equation}
In this case, the power of $\,|\det g_1|\,$ reflects not only the
discrepancy between the transformation laws (\ref{mira6}) for $n$
and $n-1$, but also the commutation of the appropriate factor
across $\exp(t E_{1,n})$ in the defining relation
(\ref{expltwine9}) for $J_\nu$. It is clear from the definition
that $A_{n,\nu}$ relates the tautological action of $GL(n-1)$ on
$W_{n-1,\,\nu+1/2}^\infty$ to that on $W_{\nu,\e}^\infty$ via the
embedding $GL(n-1)\hookrightarrow GL(n)$ into the top left corner.
We claim:
\begin{equation}
\label{expltwine16} A_{n,\nu}  f\, \equiv\, 0 \ \ \
\Longleftrightarrow\ \  J_{n,\nu} f(u)\,=\,0\, \ \text{for all
$\,u\in U$.}
\end{equation}
Indeed, since $U$ is dense in $G/P$, $f$ vanishes identically if
and only if $f$ vanishes on $U$. We use the analogous assertion
about $A_\nu f$, coupled with the following observation: let $U_1$
denote the intersection of $U$ with the image of
$GL(n-1)\hookrightarrow GL(n)$; then $U_1 \cdot \{\exp(tE_{1,n})\}
=U$.

The intertwining operators $I_{n,\nu}$\,, $I_{n-1,\,\nu-1/2}$\,
and the operators we have just defined constitute the four edges
of a commutative diagram,
\begin{equation}
\label{expltwine17}
\begin{CD}
W_{n,-\nu}^\infty \ \ \ @>{\ \ \ \ \ I_{n,\nu} \ \ \ \ \ }>>  \ \
\ \widetilde W_{n,\nu}^\infty
\\
@V A_{n,-\nu}  VV   @VV  R_{n,\nu} V
\\
W_{n-1,\,-\nu+1/2}^\infty \ \ \  @>{\ \ I_{n-1,\,\nu-1/2}\ \ }>>
\ \ \ \widetilde W_{n-1,\,\nu-1/2}^\infty \,.
\end{CD}
\end{equation}
The commutativity is a consequence of two matrix identities. The first,
\begin{equation}
\label{expltwine18}
\begin{aligned}
&\(\begin{smallmatrix}
0 &  \cdots & 0 & 1\\
   &    &  1   &  0 \\
\vdots & \iddots & & \vdots \\
1 &  & &  0
\end{smallmatrix}\)
\(\begin{smallmatrix}
1 & x_{n-1} & \cdots & x_1 \\
0 & 1    & \cdots &0 \\
\vdots & & \ddots  & \\
0 & \cdots &  & 1
\end{smallmatrix}\) \ =
\\
&\qquad\qquad\qquad = \ \(\begin{smallmatrix}
1 & 0 & \cdots & 0 \\
0 &    &     &  1 \\
\vdots & & \iddots &  \\
0 & 1 & &
\end{smallmatrix}\)
\(\begin{smallmatrix}
1 & 0 & \cdots & \cdots & 0 \\
0 & 1   &   x_{n-1}  & \cdots & x_2 \\
\vdots & &\ \ \ddots  & & \\
0 & \cdots &\!\! \cdots & & \!\! 1
\end{smallmatrix}\)
\(\begin{smallmatrix}
0 & \cdots & 0 & 1 \\
1 & 0   \cdots & 0 & x_1 \\
   & \ddots  &  & \\
0 & \cdots & 1 &  0
\end{smallmatrix}\),
\end{aligned}
\end{equation}
implies a factorization of $I_{n,\nu}$ as the composition of $I_{n-1,\,\nu-1/2}$
with a certain intermediate operator, which involves an integration over the one
parameter group $\{\exp(tE_{2,n})\}$ instead of $\{\exp(tE_{1,n})\}$, as in the
case of $J_\nu$. The second,
\begin{equation}
\label{expltwine19}
\begin{aligned}
&\(\begin{smallmatrix}
0 & & & \\
\vdots & & { }^{\textstyle{1_{(n-1)\times(n-1)}}} &     \\
0 & & & \\
1 & 0  & \cdots & 0
\end{smallmatrix}\) \!
\(\begin{smallmatrix}
1 & 0 & \cdots & 0 \\
{ }_{\scriptstyle{0}} &    &     &  \\
\vdots & & { }^{\textstyle{g_1}} &  \\
0 &  & &
\end{smallmatrix}\)\(\begin{smallmatrix}
0 & \cdots & 0 & 1 \\
1 & 0   \cdots & 0 & x_1 \\
   & \ddots  &  & \\
0 & \cdots & 1 &  0
\end{smallmatrix}\)\ =
\\
&\qquad\qquad\qquad\qquad\qquad\qquad =\ \(\begin{smallmatrix}
 & & & 0 \\
 & { }^{\textstyle{g_1}} &  & \vdots   \\
 & & & 0 \\
0 & \cdots & 0 &1
\end{smallmatrix}\)
\(\begin{smallmatrix}
1 & 0 & \cdots & 0 &x_1 \\
0 & 1    &0 & \cdots & \\
\vdots & &\ \  \ddots  & &  \\
0 &  & \cdots &  & 1
\end{smallmatrix}\),
\end{aligned}
\end{equation}
relates this intermediate operator to $J_{n,\nu}$.

Under the hypotheses of the lemma $I_{n,\nu}$ is an isomorphism -- recall (\ref{expltwine11}).
One can show that under the same hypotheses $I_{n-1,\,\nu-1/2}$ is
also an isomorphism. Alternatively one can use the meromorphic
dependence on $\nu$ to disregard the discrete set on which
$I_{n-1,\,\nu-1/2}$ might fail to be an isomorphism. In any case, when
both $I_{n-1,\,\nu-1/2}$ and $I_{n-1,\,\nu-1/2}$ are isomorphisms,
(\ref{expltwine14}), (\ref{expltwine16}), and the commutativity of
the diagram (\ref{expltwine17}) imply the assertion of the lemma. \bx

The functional equation of the mirabolic Eisenstein series relates $E_{-\nu,\psi}$ to $\widetilde E_{\nu,\psi\i}$
via the inter\-twining operator $I_\nu : W_{-\nu,\e}^{-\infty} \to \widetilde W_{\nu,\e}^{-\infty}$. For the
statement, we   follow the notational convention
\begin{equation}
\label{eisen3} G_\d(s)  =  \! \int_\R \! e(x)
\left(\sg(x)\right)^\d |x|^{s-1}\,dx  =  \begin{cases}
2(2\pi)^{-s}\, \G(s)\, \cos\textstyle\frac{\pi s}{2} &\text{if}\ \d=0\\
2(2\pi)^{-s}\, \G(s)\, \sin\textstyle\frac{\pi s}{2} &\text{if}\
\d=1
\end{cases}
\end{equation}
\cite{inforder}, which we shall also use later in this paper. Note that the integral converges, conditionally only,
  for $0 < \re s <1$, but the expression on the right provides a meromorphic continuation to the entire
$s$-plane. The two cases on the right hand side of (\ref{eisen3}) can be written uniformly using $\G$-function
identities as
\begin{equation}\label{eisen3b}
    G_\d(s) \ \ = \ \ i^\d \,\f{\G_\R(s+\d)}{\G_\R(1-s+\d)} \ \ , \ \ \ \ \text{with} \ \,
    \G_\R(s) \,=\,\pi^{-s/2}\G(\smallf s2)\ \ \text{and} \ \ \d\,\in\,\{0,1\}.
\end{equation}
We also need some notation pertaining to the finite harmonic analysis of Dirichlet characters.  Let
 $\tau_\psi=\widehat{\psi}(1) = \sum_{b\imod N}\psi(b)e(\smallf{b}{N})$ denote the Gauss sum for $\psi$, a Dirichlet character of modulus $N$ (cf.~(\ref{fft})).  We let $\widehat{(\Z/N\Z)^*}$ denote the group of characters of $\Z/N\Z^*$ and  $\phi(N)$, the Euler $\phi$-function,  its order.

\begin{prop}[Functional Equation]\label{eisfe}
\begin{multline*}
I_\nu  E_{-\nu,\psi} \ \ = \\  \  (-1)^\e  N^{2\nu-\f{\nu}{n}-\f12} G_\e(\textstyle{\nu-\f n2}+1)\,\f{1}{\phi(N)}\sum_{\srel{a\imod
N}{\xi\in \widehat{(\Z/N\Z)^*}}} \widehat{\psi}(a)\xi(a)\i
\,\ell(w_{long})\,\ell\ttwo{N}{}{}{I_{n-1}} \,\widetilde E_{\nu,\xi} \,.
\end{multline*}
Consequently, if $\psi$ is a primitive Dirichlet character of modulus $N$, then
\begin{equation*}
I_\nu  E_{-\nu,\psi} \ =  \ (-1)^\e \tau_\psi N^{2\nu-\f{\nu}{n}-\f12} G_\e(\textstyle{\nu-\f
n2}+1)\,\ell(w_{long})\,\ell\ttwo{N}{}{}{I_{n-1}}\,\widetilde E_{\nu, \psi\i} \,.
\end{equation*}  In particular
\begin{equation*}
    I_\nu\, E_{-\nu,{\mathbbm 1}} \ \ = \ \ G_0(\nu-\smallf n2+1)\, \widetilde E_{\nu,{\mathbbm 1}}\,,
\end{equation*}
where ${\mathbbm 1}$ is the trivial Dirichlet character of conductor $N=1$.
\end{prop}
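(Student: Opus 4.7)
The plan is to verify the identity by computing the restriction of both sides to the open Bruhat cell $\widetilde U \subset \widetilde Y$.  Both $I_\nu E_{-\nu,\psi}$ and the claimed right hand side lie in $\widetilde W_{\nu,\e}^{-\infty}$, and each is determined by its restriction to $\widetilde U$ by virtue of the $\G$-automorphy of each under an appropriate congruence subgroup (whose translates of $\widetilde U$ cover $\widetilde Y$).  By the meromorphic dependence of $I_\nu$ and of both Eisenstein distributions on $\nu$, it suffices to establish the formula in a range of $\nu$ where every sum and integral in the argument converges absolutely; the identity then propagates to all $\nu$.

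For the left hand side, I would use the $\d$-function expansion of $E_{-\nu,\psi}|_U$ given by proposition~\ref{eisonu} (after replacing $\nu$ with $-\nu$), and apply $I_\nu$ via the distributional pairing~(\ref{expltwine8}).  Proposition~\ref{expltwineprop} expresses the action of $I_\nu$ on smooth vectors as integration against the kernel
\[
\left|\,{\textstyle\sum_{j=2}^{n-1}} y_j z_{n+1-j}\, - \,y_1\, -\, z_1\right|^{\nu-n/2}\sgn\bigl({\textstyle\sum_{j=2}^{n-1}} y_j z_{n+1-j}\, - \,y_1\, -\, z_1\bigr)^{\e},
\]
and its distributional extension evaluates each $\d$-term at the lattice point $z_i = -v_{n+1-i}/v_1$ indexed by $v\in\Z^n$ with $v_1>0$ and $N\mid v_1,\dots,v_{n-1}$.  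Re-indexing the sum by the residue class of $v_n$ modulo $Nv_1$ and invoking the Fourier identity
\[
{\int}_{\R} |x|^{s-1}\sgn(x)^{\d}\,e(rx)\,dx \ = \ G_\d(s)\,|r|^{-s}\,\sgn(r)^{\d},
\]
which follows from~(\ref{eisen3}) by a change of variables, converts the result into a Fourier series in $y$ whose coefficients factor as $\widehat{\psi}$ of a residue, times $G_\e(\nu-n/2+1)$, times an explicit power of $N$.

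For the right hand side, the translate $\ell(w_{\text{long}})\,\ell\ttwo{N}{}{}{I_{n-1}}\widetilde E_{\nu,\xi}$ restricted to $\widetilde U$ is computed by applying left translation to the $w_{\text{long}}\widetilde U$-expansion of $\widetilde E_{\nu,\xi}$ from proposition~\ref{eisonu}; the rescaling by $\diag(N,I_{n-1})$ reindexes the summation and produces a Fourier series in $y$ whose coefficients involve $\widehat\xi$.  The weighting $\widehat\psi(a)\,\xi(a)\i/\phi(N)$ in the double sum over $a\imod N$ and $\xi\in\widehat{(\Z/N\Z)^{*}}$ is arranged precisely so that the orthogonality relation
\[
\f{1}{\phi(N)}{\sum}_{\xi}\xi(a)\i\,\xi(d) \ = \ \mathbbm{1}_{\{a\equiv d\imod N\}}\qquad\text{for } (ad,N)=1
\]
collapses the character sum to exactly the residue condition on $v_n$ that emerged on the left.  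Matching coefficient-by-coefficient yields the general formula.  The stated specializations are then immediate:~for primitive $\psi$ of modulus $N$, the identity $\widehat\psi(a)=\tau_\psi\,\psi(a)\i$ for $(a,N)=1$ leaves only $\xi=\psi\i$ surviving the orthogonality; and when $\psi=\mathbbm{1}$ and $N=1$, the diagonal translate is trivial and $\phi(1)=1$.

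The main obstacle I expect is purely bookkeeping:~reconciling the sign conventions and coordinate choices between propositions~\ref{eisonu} and~\ref{expltwineprop}, tracking the power $N^{2\nu-\nu/n-1/2}$ as it accumulates from the $\diag(N,I_{n-1})$-rescaling of $\d$-functions and from the Fourier identity, and checking that the conjugate $w_{\text{long}}\diag(N,I_{n-1})\,\widetilde\G_0(N)\,\diag(N,I_{n-1})\i\,w_{\text{long}}\i$ is compatible with the $\G_0(N)$-invariance of $I_\nu E_{-\nu,\psi}$ under the character $\alpha$.  None of these individual steps is subtle, but the combinatorics of the indices together with the multiple character-sum manipulations require careful attention.
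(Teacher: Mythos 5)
Your overall blueprint (compare restrictions to the open cell, convert both sides into Fourier series, collapse the $(a,\xi)$-sum by orthogonality, and read off the primitive and trivial-character cases) matches the shape of the final computation, and the character-sum and specialization steps are fine. The gap is in the central analytic step: there is no range of $\nu$ in which ``every sum and integral in the argument converges absolutely.'' The $\d$-function expansion of $E_{-\nu,\psi}$ on $U$ converges only for $\Re\nu<-n/2$ (it is the Eisenstein sum at parameter $-\nu$), whereas the kernel $|\sum_j y_jz_{n+1-j}-y_1-z_1|^{\nu-n/2}$ of Proposition~\ref{expltwineprop} is locally integrable only for $\Re\nu>n/2-1$; these regions are disjoint for $n\ge 2$. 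Moreover the Fourier identity $\int_\R|x|^{s-1}\sgn(x)^\d e(rx)\,dx=G_\d(s)|r|^{-s}\sgn(r)^\d$ you invoke is never absolutely convergent and only conditionally so in $0<\Re s<1$; outside that strip it must be defined by regularization (integration by parts with cutoffs, as in \cite{inforder}). So the proposed strategy ``establish the identity where everything converges absolutely, then continue meromorphically'' cannot be implemented as stated, and the term-by-term application of $I_\nu$ to the infinite sum of $\d$-functions, followed by an interchange with the divergent oscillatory $x_1$-integral, is exactly the unjustified manipulation.

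This is why the paper's proof does not compute $I_\nu E_{-\nu,\psi}|_{\widetilde U}$ directly. Since $I_\nu$ on distribution vectors is defined only by duality, the identity is tested against smooth $\widetilde f\in\widetilde W_{-\nu,\e}^\infty$ via (\ref{expltwine8}), with $\Re\nu\gg n/2$ so that $\widetilde I_\nu$ converges and $\widetilde E_{\nu,\xi}$ is in its convergent range, while $E_{-\nu,\psi}$ enters through its meromorphically continued hybrid expansion (Fourier series in $u_1$, $\d$'s in the other variables) rather than the pure $\d$-expansion. Two reductions make the resulting sums and interchanges legitimate: one may assume $\widetilde I_\nu\widetilde f$ has compact support in $U$ and, crucially, that its $x_1$-integrals vanish -- a hypothesis justified because any discrepancy would be a Fourier series independent of $y_1$, impossible for a congruence-invariant distribution -- and Lemma~\ref{expltwine10} then transfers this vanishing to $\widetilde f$ itself ((\ref{eisfe6})--(\ref{eisfe7})), which is what makes the $r_1=0$ modes drop out and the sums (\ref{eisfe9}), (\ref{eisfe10}) absolutely convergent. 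Finally the $x_1$-integral producing $G_\e(\nu-\f n2+1)$ is evaluated by the regularization techniques of \cite{inforder}, not by absolute convergence. Your proposal would need substitutes for all three ingredients (the duality formulation, the test-function reduction via Lemma~\ref{expltwine10}, and the regularized evaluation of the $G_\e$ integral) before the coefficient matching you describe becomes a proof.
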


\begin{proof}
 Since both sides of the equation depend
meromorphically on $\nu$, we may assume that the hypotheses of
lemma \ref{expltwine10} hold, both at $\nu$ and $-\nu$. We shall
also require
\begin{equation}\label{eisfe1}
\ \Re \nu \ \gg \ n/2\,,
\end{equation}
so that the integral defining $\,\widetilde I_\nu$ converges.
Because of (\ref{expltwine8}), the proposition is equivalent to
the equality
\begin{multline}
\label{eisfe2} \f{1}{(-1)^\e N^{2\nu-\f{\nu}{n}-\f 12} G_\e(\textstyle{\nu-\f n2}+1)}\int_{U} E_{-\nu,\psi} (u)\, \widetilde{I_\nu}\widetilde
f(u)\,du \  \ = \\  \f{1}{\phi(N) }\sum_{\srel{a\imod
N}{\xi\in \widehat{(\Z/N\Z)^*}}} \widehat{\psi}(a)\xi(a)\i \displaystyle
\int_{\widetilde U}\ell\ttwo{I_{n-1}}{}{}{N}\,\ell(w_{long})\,\widetilde E_{\nu,\xi} (\widetilde u)\, \widetilde
f(\widetilde u)\,d\widetilde u \,,
\end{multline}
for all $\widetilde f \in \widetilde W_{-\nu}^\infty\,$. Both
$E_{-\nu,\psi}$ and $\widetilde E_{\nu,\xi}$ are  invariant under congruence subgroups of $GL(n,\Z)$, and
$\widetilde I_\nu : W_{-\nu,\e}^{\infty} \simeq \widetilde
W_{\nu,\e}^{\infty}$ by (\ref{expltwine11}). It therefore suffices to
establish this equality when $\widetilde I_\nu \widetilde f$
has~-- necessarily compact -- support in the open cell $U \subset
Y$,
\begin{equation}
\label{eisfe3} \text{supp}\,\bigl(\widetilde I_\nu\widetilde
f\,\bigr) \,\ \text{is compact in}\,\ U\,.
\end{equation}
We shall make one other assumption, namely
\begin{equation}
\label{eisfe4} \int_\R \widetilde I_\nu \widetilde f
(u(x_1,\,x_2,\,\dots\,\,x_{n-1}))\,dx_1\, = \, 0\,, \ \ \text{for
all}\,\ x_2,\,\dots\,,x_{n-1}\in\R\,.
\end{equation}
Indeed, if (\ref{eisfe2}) were to hold subject to the condition
(\ref{eisfe4}), the restriction to $\,\widetilde U$ of the
difference between $I_\nu E_{-\nu,\psi}$ and the formula we have asserted it is equal to
could be expressed as a Fourier series
\begin{equation}
\label{eisfe5} \sum_{r_2,\,\dots,\,r_{n-1} \in \Z} \,
a_{r_2,\,\dots,\,r_{n-1} } \,e(r_2 y_2 + \dots +
r_{n-1}y_{n-1})\,,
\end{equation}
without dependence on $y_1$. But no such expression can be the
restriction to $\widetilde U$ of a   distribution
vector invariant under a congruence subgroup $\G$: any generic $\g\in\G$ will transform the expression
(\ref{eisfe5}) to a distribution that does depend non-trivially on
$y_1$. This justifies the additional hypothesis (\ref{eisfe4}).

In effect, the integrals  (\ref{eisfe4}) coincide with the
integrals $J_{-\nu}\bigr( \widetilde I_\nu\widetilde f\bigr)(u)$,
as in (\ref{expltwine9}), for $u\in U$. Consequently lemma
\ref{expltwine10} implies the vanishing of $I_{-\nu}\!\circ\!
\widetilde I_\nu f$ on the complement of $\widetilde U$. But our
hypotheses ensure that $I_{-\nu}\!\circ\! \widetilde I_\nu\,$ is a
multiple of the identity, so
\begin{equation}
\label{eisfe6} \widetilde f\ \ \text{vanishes on the complement of
$\widetilde U$ in $\widetilde Y$}.
\end{equation}
Having compact support in $U$, $\widetilde I_\nu \widetilde f$
surely vanishes on the complement of $U$ in $Y$. Thus, applying
the lemma in reverse, we find
\begin{equation}
\label{eisfe7} \int_\R \widetilde f (\widetilde
u(y_1,\,y_2,\,\dots\,\,y_{n-1}))\,dy_1\, = \, 0\,, \ \ \text{for
all}\,\ y_2,\,\dots\,,y_{n-1}\in\R\,.
\end{equation}
We shall also need the estimate
\begin{equation}
\label{eisfe8} \left| P\bigl(\textstyle\f{\partial\ }{\partial
y_1},\,\dots,\,\f{\partial\ \ \ }{\partial
y_{n-1}}\bigr)\widetilde f(\widetilde u(y)) \right| \,= \,
O(\|y\|^{-\Re\nu-n/2})\ \ \text{as $\,\|y\|\to \infty$}\,,
\end{equation}
for all constant coefficient differential operators
$P\bigl(\textstyle\f{\partial\ }{\partial
y_1},\,\dots,\,\f{\partial\ \ \ }{\partial y_{n-1}}\bigr)$. It follows from (\ref{expltwine6}),
combined with the fact that the elements of the Lie algebra
$\widetilde {\mathfrak u}$ of $\widetilde U$ act on $\widetilde
W_{-\nu}^\infty$ by constant coefficient vector fields on
$\widetilde U\cong \R^{n-1}$.  In view of (\ref{eisfe1}),  (\ref{eisfe8})
implies the decay of $\widetilde f(\widetilde u(y))$ and all its derivatives.

We compute the integral on the right hand side of (\ref{eisfe2}) using the last restriction formula in proposition \ref{eisonu}:
\begin{equation}
\label{eisfe9}
\begin{aligned}
&\int_{\widetilde U} \ell\ttwo{I_{n-1}}{}{}{N}\ell(w_{\text{long}})\widetilde E_{\nu,\xi} (\widetilde u)\, \widetilde
f(\widetilde u)\,d\widetilde u \ \  =
\\
& = \  \ \f{N^{(1/2-\nu/n)(n-1)}}{N^{n-1}}\, \sum_{\srel{r\in \Z^{n-1}}{\srel{r_1\neq
0}{d|GCD(r)} } }  \xi(d)\,
d^{-\nu+n/2-1} \int_{\R^{n-1}} \widetilde f(\widetilde u(y))\,
e(r\cdot y)\, dy\,;
\end{aligned}
\end{equation}
here we have used  the fact that $\ttwo{I_{n-1}}{}{}{N} \i \widetilde u(y)=\widetilde u(Ny)\ttwo{I_{n-1}}{}{}{N}\i$, and the transformation law (\ref{mira6}) to pull out the power of $N$ in the numerator.  The terms corresponding to $r_1=0$
have been dropped because of (\ref{eisfe7}).
The sum in (\ref{eisfe9}) is absolutely convergent because  of the derivative bound (\ref{eisfe8}).

Let us now consider the finite sum over $a$ and $\xi$ to its left in (\ref{eisfe2}).  By orthogonality of characters
\begin{equation}\label{eisfe2p1}
    \f{1}{\phi(N)}\sum_{\srel{a\imod
N}{\xi\in \widehat{(\Z/N\Z)^*}}} \widehat{\psi}(a)\,\xi(a)\i\,\xi(d) \ \ = \ \  \left\{
                                                                                \begin{array}{ll}
                                                                                  0\,, & (d,N)>1 \\
                                                                                  \widehat\psi(d)\, , & (d,N)=1\,.
                                                                                \end{array}
                                                                              \right.
\end{equation}
Therefore the right hand side of (\ref{eisfe2}) is equal to
\begin{equation}\label{eisfe2p3}
    N^{(1-n)(1/2+\nu/n)}\,\sum_{\srel{r\in \Z^{n-1}}{\srel{r_1\neq 0 }{d|GCD(r)}}} \widehat\psi(d)\,d^{-\nu+n/2-1}
 \int_{\R^{n-1}} \widetilde f(\widetilde u(y))\,
e(r\cdot y)\, dy\,.
\end{equation}

The compact support of $\widetilde I_\nu\widetilde f$ and
(\ref{eisfe4}) imply the analogous expression for the integral on
the other side of (\ref{eisfe2}), but using the hybrid formula for the restriction of $E_{-\nu,\psi}$ to $U$ in proposition~\ref{eisonu}:
\begin{equation}
\label{eisfe10}
\begin{aligned}
&\int_{U}E_{-\nu,\psi} (u)\, \widetilde I_\nu \widetilde f(u)\,d u \ \  = \ \ N^{\nu-n/2}
\sum_{\srel{v\in \Z^{n}}{\srel{v_1>0}{v_n\neq 0}}}\widehat\psi(v_n)\,
v_1^{\nu-n/2+1}  \ \times
\\
& \ \ \ \ \ \ \ \ \ \ \ \ \ \ \ \times \
\int_{\R^{n-1}} \widetilde I_\nu \widetilde f( u(x))\,
e(v_1v_n x_1)\d_{v_{n-1}/v_1}(x_2)\cdots \d_{v_2/v_1}(x_{n-1}) dx\,.
\end{aligned}
\end{equation}
It is important to note that this sum converges absolutely.  Indeed,
\begin{equation}
\label{eisfe12}
\begin{aligned}
&\sum_{v_2,\,\dots,\,v_{n-1}\in\Z}\  \bigl| \int_\R
\phi( x_1,\textstyle \f{v_{n-1}}{v_1},\,\dots,\,\f{v_2}{v_1})\,  \displaystyle e(v_1v_nx_1)\,dx_1  \bigr|\ \leq
\\
&\qquad\ \ \leq\ C\,v_1^{n-2}\sup_{x_2,\,\dots,\,x_{n-1}\in\R}\ \bigl|  \int_\R \phi(x_1,x_2,\dots,x_{n-1})\, e(v_1 v_n x_1)\,dx_1 \bigr|  \,,
\end{aligned}
\end{equation} for any $\phi \in C_c^\infty(U)$ such as $\phi=\widetilde I_\nu \widetilde f$,
with $C$ depending only on the diameter of the support of $\phi\,$; the supremum on the right decays
faster than any negative power of $|v_1v_n|$.

In view of (\ref{eisfe2p3}) and (\ref{eisfe10}),  a notation change reduces
(\ref{eisfe2}) to the following assertion: under the hypotheses (\ref{eisfe1}) and (\rangeref{eisfe3}{eisfe4}),
\begin{equation}
\label{eisfe13}
\begin{aligned}
&(-1)^\e G_\e(\textstyle{\nu-\f n2}+1)\displaystyle  \sum_{\srel{r\in \Z^{n-1}}{\srel{r_1\neq 0 }{d>0}}} \widehat\psi(d)\, d^{\,n/2-\nu-1} \!\!
\int_{\R^{n-1}} \!\! \widetilde f(\widetilde u(y))\, e\bigl(\,\textstyle \sum_j d \,r_j \,y_j\bigr) dy
\\
& =\ \  \sum_{\srel{d>0}{k\neq 0}}\, \widehat{\psi}(d)    \,k^{\nu- n/2+1} \sum_{r_2, \dots, r_{n-1}\in \Z} \, \int_\R\! \widetilde I_\nu \widetilde f (x_1, \textstyle \f{r_2}{k},\dots , \f{r_{n-1}}{k})  e(dkx_1)\, dx_1\,.
\end{aligned}
\end{equation}
The explicit formula for $I_\nu$ in proposition
\ref{expltwineprop} -- or more accurately, the analogous formula
for $\widetilde I_\nu$ -- implies
\begin{equation}
\label{eisfe14}
\begin{aligned}
&  \int_\R \widetilde I_\nu \widetilde f (x_1, \textstyle \f{r_2}{k},\,\dots , \, \f{r_{n-1}}{k})\,   e(dkx_1)\, dx_1\ =
\\
& = \ \  \int_{\R} \int_{\R^{n-1}}
\widetilde f(\widetilde u(z)) \, e(dkx_1)\,  | {\textstyle \sum}_{j\geq 2} \smallf{r_j z_{n+1-j}}{k} -z_1-x_1
|^{\nu -n/2}\ \times \\
&   \qquad\qquad\qquad\qquad \times \  \sgn({\textstyle\sum}_{j\geq 2} \smallf{r_j z_{n+1-j}}{k} -z_1-x_1)^\e  \,dz\,dx_1
\\
& = \ \  \int_{\R} \int_{\R^{n-1}}
\widetilde f(\widetilde u(z)) \, e(dkx_1+d {\textstyle\sum}_{j\ge 2}r_jz_{n+1-j}-dkz_1) \ \times \\
&   \qquad\qquad\qquad\qquad \times \  | -x_1
|^{\nu -n/2}\, \sgn( -x_1)^\e  \,dz\,dx_1
\\
&= \ \ \int_{\R}|x_1|^{\nu-n/2}\sgn(-x_1)^\e e(dkx_1) \ \times \\
&\qquad\qquad\qquad\qquad \times \   \int_{\R^{n-1}}
\widetilde f(\widetilde u(z))\,e\bigl(-dkz_1 +
d \textstyle\sum_{j\geq 2} r_j \displaystyle
z_{j}\bigr) \,dz\,.
\end{aligned}
\end{equation}
The change of variables $x_1 \mapsto x_1 - z_1 + d^{-1}\sum r_j z_{n+1-j}$ at the second step depends on
interchanging the order of the two integrals. The $z$-integral is an ordinary, convergent integral,
whereas the $x_1$-integral is that of a distribution against a $C^\infty$ function. It can be turned into an
ordinary, convergent integral by repeated integration by parts near $x_1=\infty$ to bring down the real
part of the exponent $\nu-n/2$. Away from infinity the $x_1$-integral already is an ordinary
convergent integral since $\Re \nu \gg 0$; the two phenomena must be separated by a suitable cutoff
function. Our paper \cite{inforder} describes these techniques in detail. They apply equally
to the evaluation of the integral
\begin{equation}
\label{eisfe15}
\int_{\R}\ | x_1 |^{\nu -n/2}\,\sgn(-x_1)^\e\,e(dkx_1)\, dx_1\ = \ (-1)^\e\, |dk|^{n/2-\nu-1}\,G_\e(\nu-\textstyle \f n 2 + 1)\,,
\end{equation}
reducing it to (\ref{eisen3}) in the convergent range.  Identifying $k$ with $r_1$ and summing over $d>0$ and $r\in \Z^{n-1}$, $r_1\neq 0$, gives the identity (\ref{eisfe13}), and hence completes the proof.
\end{proof}

The parameter $\nu$ is natural from the representation theoretic point of view. In applications to functional
equations, we set
\begin{equation}
\label{eisen5} \nu \ = \ n\,s - \rho_{\text{mir}}   \ = \ n(s-1/2)\,,
\end{equation}
which has the effect of translating the symmetry $\nu \mapsto -\nu$ into $s \mapsto 1-s$.

\section{Pairing of Distributions}
\label{pairingsec}

In this section we discuss some  pairings of automorphic distributions that were constructed in
\cite{pairingpaper}, and how the analytic continuation and functional equations of Eisenstein distributions carry
over to these pairings.  In some cases the pairings can be computed as a product of shifts of the functions $G_\d$
defined in (\ref{eisen3}), times certain $L$-functions.  This gives a new construction of these $L$-functions, and
a new method to directly study their analytic properties.  In particular the results here are used crucially in our
forthcoming paper   \cite{extsqpaper}  to give new results about the analytic  continuation that were not available
by the two existing methods, the Rankin-Selberg and Langlands-Shahidi methods.

  We begin with a discussion of the distributional pairings in \cite{pairingpaper}, though not in the same
degree of generality as in that paper.   We consider the semidirect product $G\cdot U$ of a real
linear group $G$ with a unipotent group $U$. We suppose that $G \cdot U$ acts on flag varieties or generalized flag
varieties $Y_j$ of real linear groups $G_{j}$, $1 \leq j \leq r$, in each case either by an inclusion $G\cdot U
\hookrightarrow G_{j}$, or via $G\hookrightarrow G_{j}$ composed with the quotient map $G\cdot U \to G$. Then
$G\cdot U$ acts on the product $Y_1 \times \dots \times Y_r$. We suppose further that
\begin{equation}
\label{pair5o}
\begin{gathered}
G\cdot U\ \ \text{has an open orbit}\  \ \O \subset Y_1 \times \dots \times Y_r\,, \text{and at points of $\O$}
\\
\text{the isotropy   subgroup of $G \cdot U$ coincides with $Z_G$=\,\,center of $G$, }
\end{gathered}
\end{equation}
so that   $\O \simeq (G\cdot U)/Z_G$\,, and that
\begin{equation}
\label{pair5+1o}
\text{the conjugation action of $G$ on $U$ preserves Haar measure on $U$.}
\end{equation}
We let $\G \subset G$, $\G_U \subset U$, $\G_j \subset G_{j}$ denote arithmetically defined subgroups such that
$\G\cdot \G_U \hookrightarrow \G_1 \times \cdots \times \G_r$.

Our theorem    also   involves automorphic distributions $\tau_j \in C^{-\infty}(Y_j,\L_j)^{\G_j}$,
in other words, $\G_j$-invariant distribution sections of $G_{j}$-equivariant $C^\infty$ line bundles $\L_j \to
Y_j$, $1\leq j\leq r$. The exterior tensor product
\begin{equation}
\label{pair6o} \L_1 \boxtimes \dots \boxtimes \L_r \
\longrightarrow \ Y_1 \times \dots \times Y_r
\end{equation}
restricts to a $G\cdot U$-equivariant line bundle over   $\mathcal O \simeq (G\cdot U)/Z_G$. If
\begin{equation}
\label{pair7o}
\begin{gathered}
\text{the isotropy group $Z_G$   acts trivially on the fiber}
\\
\text{of}\,\ \L_1 \boxtimes \dots \boxtimes \L_r\, \ \text{at
points of $\O$},
\end{gathered}
\end{equation}
as we shall assume from now on, the restriction of the line bundle (\ref{pair6o}) to the open orbit $\O$ is
canonically trivial. We can then regard
\begin{equation}
\label{pair8o} \tau\ = \ \text{restriction of}\,\ \tau_1 \boxtimes
\dots \boxtimes \tau_r\,\ \text{to}\,\ \O
\end{equation}
as a scalar valued distribution on   $(G\cdot U)/Z_G$ -- a $\,\G\cdot \G_U$-invariant distribution, since
the $\,\tau_j$ are $\G_j$-invariant:
\begin{equation}
\label{pair9o} \tau \in C^{-\infty}\left((\G\cdot \G_U)\backslash
(G\cdot U)/Z_G\right).
\end{equation}
As the final ingredient, we fix a character
\begin{equation}
\label{pair10o}
\begin{gathered}
\chi \, : U \, \rightarrow \{\,z\in \C^* \mid \ |z|=1\,\}\,\
\text{such that}\,\ \chi (g u g^{-1}) = \chi(u)
\\
\text{for all $g\in G\,,\ u\in U$,\, and}\,\ \chi (\g) = 1\,\
\text{for all $\g\in\G_U$\,}.
\end{gathered}
\end{equation}
Since $\G_U\backslash U$ is compact,
\begin{equation}
\label{pair11o} \left\{\, g \ \mapsto \ \int_{\G_U\backslash U}
\chi(u)\,\tau(ug)\,du\,\right\}\ \in \ C^{-\infty}(\G\backslash   G/Z_G)
\end{equation}
is a well defined distribution on $G/Z_G$ -- a $\G$-invariant scalar valued distribution because of
(\rangeref{pair9o}{pair10o}). Finally, we require that
\begin{equation}
\label{cuspidal3o} \text{at least one of the $\,\tau_i$ is
cuspidal.}
\end{equation}

\begin{thm}{\cite[Theorem~2.29]{pairingpaper}.}\label{thmpairingo}
Under the hypotheses just stated, for every test function $\,\phi\in C_c^{\infty}(G)$, the function
\[
g\ \mapsto \ F_{\tau,\chi,\phi}(g)\ = \ \int_{h\in G}\
\int_{\G_U\backslash U} \chi(u)\,\tau(ugh)\,\phi(h)\,du\,dh
\]
is a well defined $C^{\infty}$ function on $G/Z_G$, invariant on the left under $\G$. This function is integrable
over $\G\backslash G/Z_G$, and the resulting integral
\[
P(\tau_1,\dots, \tau_r) \ \ = \ \ \int_{\G\backslash G/Z_G}\
\int_{h\in G}\ \int_{\G_U\backslash U}
\chi(u)\,\tau(ugh)\,\phi(h)\,du\,dh\,dg
\]
does not depend on the choice of $\phi$, provided $\phi$ is normalized by the condition $\int_G \phi(g)\,dg = 1$.
The $r$-linear map $(\tau_1,\dots,\tau_r) \mapsto F_{\tau,\chi,\phi} \in L^1(\G\backslash G/Z_G)$ is continuous,
relative to the strong distribution topology, in each of its arguments, and relative to the $L^1$ norm on the
image. If any one of the $\tau_j$ depends holomorphically on a complex parameter $s$, then so does
$P(\tau_1,\dots,\tau_r)$.
\end{thm}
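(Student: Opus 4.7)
The plan is first to make sense of $F_{\tau,\chi,\phi}$ as a bona fide smooth function on $G/Z_G$, and then to dispatch the five claimed properties (smoothness, $\G$-invariance, integrability, $\phi$-independence, continuity/holomorphy) in that order, with integrability being the single main obstacle. I would set
$\tilde\tau(x) \, = \, \int_G \tau(xh)\phi(h)\,dh$,
interpreted as the right-convolution of the $\G\cdot\G_U$-invariant distribution $\tau$ from (\ref{pair9o}) with $\phi\in C_c^\infty(G)$ using the right $G$-action on $(G\cdot U)/Z_G$. The standard smoothing-by-convolution argument turns $\tilde\tau$ into a genuine $C^\infty$ function which remains left $\G\cdot\G_U$-invariant, so that
$F_{\tau,\chi,\phi}(g) \, = \, \int_{\G_U\backslash U}\chi(u)\,\tilde\tau(ug)\,du$
is smooth in $g$ because the integration is over the compact quotient $\G_U\backslash U$. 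The left $\G$-invariance of $F_{\tau,\chi,\phi}$ is then a direct change of variables: for $\gamma\in\G$, write $u\gamma=\gamma(\gamma^{-1}u\gamma)$, use left $\G$-invariance of $\tilde\tau$, and substitute $u\mapsto \gamma u\gamma^{-1}$; the integrand and the domain $\G_U\backslash U$ are preserved thanks to the hypotheses (\ref{pair5+1o}) and (\ref{pair10o}).

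The crux of the proof, and the real obstacle, is the $L^1$ bound for $F_{\tau,\chi,\phi}$ on $\G\backslash G/Z_G$. This is where the cuspidality hypothesis (\ref{cuspidal3o}) is indispensable: without it the integral already diverges for Eisenstein-type factors. The plan is to cover $\G\backslash G/Z_G$ by Siegel sets for the proper parabolics of the ambient $G_j$ and, on each such set, to exploit that the designated cuspidal $\tau_i$ has vanishing degenerate Fourier coefficients, as in (\ref{cuspidal1}) and the surrounding discussion. Combined with the $\chi$-averaging over $\G_U\backslash U$, this identifies $F_{\tau,\chi,\phi}$, within each Siegel set, with a Whittaker-type expansion involving only generic characters. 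Repeated integration by parts in the unipotent coordinates -- legal because $\tilde\tau$ is smooth -- then produces decay faster than any polynomial along the corresponding split-torus direction, easily beating the polynomial volume growth of the Siegel set and yielding absolute integrability.

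The remaining claims follow essentially formally. For independence of $\phi$, given two normalized $\phi_1,\phi_2\in C_c^\infty(G)$, Fubini -- justified by the $L^1$ bound just obtained together with the compact support of the $\phi_j$ -- combined with the right-translation invariance of the quotient measure on $\G\backslash G/Z_G$ under the substitution $g\mapsto gh^{-1}$ reduces $P(\tau_1,\ldots,\tau_r)$ to a finite quantity times $\int_G(\phi_1-\phi_2)(h)\,dh \, = \, 0$. Continuity in each $\tau_j$ with respect to the strong distribution topology comes from the continuity of the smoothing operation $\tau\mapsto\tilde\tau$ from $C^{-\infty}$ to $C^\infty$, together with the uniformity of the Siegel-set decay estimates on bounded subsets of the distribution spaces. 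Finally, holomorphic dependence on a complex parameter $s$ is inherited from holomorphy of the relevant $\tau_j$ in $s$: both the smoothing and the averaging are linear and preserve holomorphic dependence, and the uniform $L^1$ bounds permit differentiation, or contour deformation, in $s$ under the outer integral defining $P(\tau_1,\ldots,\tau_r)$.
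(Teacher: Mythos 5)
First, a point of order: the paper does not prove Theorem~\ref{thmpairingo} at all --- it is quoted from \cite[Theorem~2.29]{pairingpaper} --- so there is no in-paper argument to compare yours with; what you have written is an outline of the strategy of that companion paper rather than a proof. Your opening steps are fine: right convolution with $\phi$ does produce a smooth, left $\G\cdot\G_U$-invariant function, the $\chi$-average over the compact quotient $\G_U\backslash U$ preserves smoothness, and left $\G$-invariance follows from (\ref{pair5+1o}) and (\ref{pair10o}) exactly as you say.

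The gap is that the heart of the theorem --- integrability over $\G\backslash G/Z_G$, together with the uniform estimates behind the continuity and $\phi$-independence claims --- is asserted rather than proved. ``Repeated integration by parts in the unipotent coordinates'' is not enough: the vanishing of degenerate Fourier coefficients as in (\ref{cuspidal1}) does not by itself yield rapid decay of the smoothed cuspidal factor on Siegel sets (the paper itself warns, right after (\ref{cuspidal1}), that cuspidality also involves conditions ``at infinity''); that decay, uniform in all root directions and with control of all derivatives, is a separate substantial theorem, cf.\ \cite{decay}. Moreover you must beat not only the polynomial volume growth of the Siegel set but also the moderate growth of the non-cuspidal factors (e.g.\ the Eisenstein distribution), and you must control how the restriction of $\tau_1\boxtimes\dots\boxtimes\tau_r$ to the open orbit behaves as one approaches the orbit boundary; your sketch does not engage with any of this. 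Finally, the $\phi$-independence argument is circular as stated: the Fubini interchange you invoke would require integrability of the unsmoothed average $H(g)=\int_{\G_U\backslash U}\chi(u)\,\tau(ug)\,du$, which is only a distribution on the quotient; the $L^1$ bound you have is for $F_{\tau,\chi,\phi}$ itself, and it does not license pulling the $h$-integral outside. One needs a genuine additional argument here --- for instance writing $\phi_1-\phi_2$ (which has integral zero) as a finite sum of derivatives of test functions and integrating by parts in $g$, with the decay estimates justifying the boundary-free integration, or rerunning the Siegel-set estimates uniformly in the translation variable.
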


  At first glance, the hypothesis (\ref{pair5o}) does not seem to include the hypothesis (2.4b) in
\cite{pairingpaper}. However, since $Z_G$ acts trivially on the orbit $\mathcal O$, the hypothesis (2.4b) does hold
if we replace $G$ by its derived group. Thus, instead of integrating over $\G \backslash G/Z_G$, we could integrate
over $(\G \cap [G,G])\backslash [G,G]/Z_G$. The hypotheses (\rangeref{pair5o}{pair5+1o}) are therefore sufficient
to apply the results of \cite{pairingpaper}.

We shall now describe two interesting cases of this pairing that both involve a similar setup of flag varieties and
the mirabolic Eisenstein series as a factor.  Because we shall work with more than one group and flag variety, we
use subscripts: $G_k$ will denote $GL(k,\R)$ and $X_k = G_k/B_k$  its flag variety; cf.~(\rangeref{ps2}{ps3}). The
Eisenstein distributions $E_{\nu,\psi}$ from (\ref{eisen1}) are $\G_1(N)$-invariant sections of the
line bundle $\L_{\nu-\rho_{\text{mir}},\e}$ over the generalized flag variety $Y_n\cong \R\mathbb P^{n-1}$.
  In addition to these series and representations $W_{\nu,\e}$ and $\widetilde W_{\nu,\e}$, we also consider their  products  with the character $\sgn(\det)^\eta$, $\eta\,\in\,\Z/2\Z$ (see the remark above (\ref{mira5})).
Our two particular pairings depend  crucially on the following geometric fact:\begin{equation} \label{pair7}
\begin{gathered}
G_n\ \ \text{acts on}\  X_n\times X_n \times Y_n\ \text{with a dense open orbit; the action on}
\\
\text{this open orbit is free modulo the center, which acts trivially.}
\end{gathered}
\end{equation}
Indeed, the diagonal action of $G_n$ on $X_n\times X_n$ has a dense open orbit. At any point in the open orbit, the
isotropy subgroup consists of the intersection of two opposite Borel subgroups~-- equivalently, a $G_n$-conjugate
of the diagonal subgroup. That group has a dense open orbit in $Y_n$, and only $Z_n=$ center of $G_n$ acts
trivially.

In the first example, which represents the Rankin-Selberg $L$-function for automorphic distributions $\tau_1$,
$\tau_2$ on $GL(n,\R)$,   the integer $r=3$, $U=\{e\}$, $Y_1=Y_2=X_n$, and $Y_3=\RP^{n-1}$. We require both
$\,\tau_1$ and $\,\tau_2$ to be cuspidal,   but impose no such condition on $\,\tau_3$, which is taken to
be the mirabolic Eisenstein distribution.

The second example, which represents the exterior square $L$-function of a cuspidal automorphic distribution $\tau$
on $GL(2n,\R)$,  involves a nontrivial unipotent group, has $r=2$, and only a single cusp form $\tau_1=\tau$
($\tau_2$ is the mirabolic Eisenstein distribution).
 The
decomposition $\,\R^{2n} = \R^n \oplus \R^n$ induces embeddings
\begin{equation}
\label{pair1} G_n \times G_n \ \hookrightarrow \ G_{2n}\,,\qquad
X_n \times X_n \ \hookrightarrow \ X_{2n}\,.
\end{equation}
The translates of $X_n\times X_n$ under the abelian subgroup
\begin{equation}
\label{pair2} U  \ = \ \left\{ \left. \begin{pmatrix}
\textstyle{I_{n}}  & \textstyle{A} \\ \textstyle{0_{n}}  &
\textstyle{I_{n}}  \end{pmatrix}\ \right| \ A\in M_{n\times
n}(\R)\, \right\} \ \ \subset\  \ G_{2n}
\end{equation}
sweep out an open subset of $X_{2n}$; moreover the various $U$-translates are disjoint, so that
\begin{equation}
\label{pair3} U  \times X_n \times X_n \ \hookrightarrow \
X_{2n}\,.
\end{equation}
Let $\tau \in C^{-\infty}(X_{2n},\L_{\l-\rho,\d})^{\G}$ be a cuspidal automorphic distribution as in (\ref{ps11}),
and $du$ be the Haar measure on $U$ identified with the standard Lebesgue measure on $M_{n\times n}(\R)$. The group
of integral matrices $U(\Z)$ lies in the kernel of the character
\begin{equation}
\label{pair4} \theta : U \ \longrightarrow \ \C^*\,,\qquad
\theta\(\begin{smallmatrix} I_n & A \\ 0_n & I_n \end{smallmatrix}
\) = e(\tr A)\,,
\end{equation}
and because $\G\cap U (\Z)$ has finite index in $U(\Z)$,  the integral
\begin{equation}
\label{pair5}
\gathered S_\theta \tau    =_{\text{def}} \qquad
\qquad\qquad\qquad\qquad\qquad\qquad\qquad\qquad\qquad\qquad\qquad\qquad\ \ \ \ \ \ \
\\  \ \f{1}{\operatorname{covol}(\G\cap U(\Z))}
\int_{\G \cap U(\Z)\backslash U} \theta(u)\, \ell(u)\tau\,du \ \
\in \ \ \ C^{-\infty}(X_{2n},\L_{\l-\rho,\d})
\endgathered
\end{equation}
is well defined, even if $\G$ is replaced by a finite index subgroup. It restricts to a distribution section of
$\L_{\l-\rho,\d}$ over the image of the open embedding (\ref{pair3}). As such, it is smooth in the first variable,
since $\,\ell(u)S_\theta \tau = \theta(u)\i S_\theta\tau\,$ for $u\in U\,$. We can therefore evaluate this
distribution section at $e\in U$, and define
\begin{equation}
\label{pair6} \left. S\tau\ =  \ S_\theta {\tau} \right|_{X_n\times
X_n}  \ \in \ \ C^{-\infty}(X_n\times
X_n,\L_{\l-\rho,\d}|_{X_n\times X_n})^{\G_n}\,.
\end{equation}
Here $\G_n$ is a congruence subgroup of $G_n(\Z)$ whose diagonal embedding into $G_n\times G_n \subset G_{2n}$
leaves $\tau$ invariant under the left action, and preserves $\G\cap U(\Z)$ by conjugation. The superscript
signifies invariance under the diagonal action of $\G_n$  on $X_n\times X_n$. This invariance is a consequence of
the fact that conjugation by the diagonal embedding of any $\g\in \G_n$ also preserves  the character $\theta$ as
well as   $U$, without changing the measure.

We restrict the product of the $G_n$-equivariant line bundles $\L_{\l-\rho,\d}|_{X_n\times X_n}$ and
$\L_{\nu-\rho_{\text{mir}},\e} \to Y_n$ to the open orbit and pull it back to $G_n/Z_n$   ($Z_n=Z_{G_n}$=\,center of
$G_n$\,)\,,   resulting in a $G_n$-equivariant line bundle $\L\to G_n/ Z_n\,$;\, $S\tau\! \cdot\!
E_{\nu,\psi}$ is then a $\G'$-invariant distribution section of $\L$ for
\begin{equation}\label{Gammaprime}
\G' \ \ =  \ \ \G_n\, \cap \, \G_1(N)\,.
\end{equation}
The center $Z_n$ acts on the fibers of $\L$ by the restriction to $Z_n$ of   the character
$\chi_{\l-\rho,\d} \cdot \chi_{\nu-\rho_{\text{mir}},\e}\cdot \sgn(\det)^\eta$, where $\eta\in\Z/2\Z$; recall (\ref{ps5}) and
(\ref{mira4}), and note that $\chi_{\l-\rho,\d}$ takes values on $Z_n$ via its diagonal embedding into
${Z_{2n}}\subset G_{2n}$. We shall assume that $Z_n$ lies in the kernel of $\chi_{\l-\rho,\d} \cdot
\chi_{\nu-\rho_{\text{mir}},\e}\cdot \sgn(\det)^\eta$~-- equivalently,
\begin{equation}
\label{pair8} \l_1 + \l_2 + \dots + \l_{2n}\ = \ 0\,,\qquad \d_1 +
\d_2 + \dots + \d_{2n}\ \equiv \ \e + n\,\eta \pmod 2\,.
\end{equation}
  The first of these conditions involves no   essential loss of generality, since twisting an
automorphic representation by a central character does not affect the automorphy.  The character
$\chi_{\nu-\rho_{\text{mir}},0}$ takes the value $1$ on $Z_n$ regardless of the choice of $\nu$, hence (\ref{pair8}) makes
$\L\to G_n/Z_n$ a $G_n$-equivariantly trivial line bundle. In this situation, $S\tau\! \cdot \! E_{\nu,\psi}$
becomes a $\G'$-invariant scalar valued distribution on $G_n/Z_n$,
\begin{equation}
\label{pair9} S\tau\! \cdot\! E_{\nu,\psi} \in
C^{-\infty}(G_n/Z_n)^{\G'}\,.
\end{equation}
Theorem~\ref{thmpairingo} applies to this specific setting and states
\begin{cor}[\cite{pairingpaper}]
\label{thmpairingcor} Under the hypotheses just stated, for every test function $\,\phi\in C_c^{\infty}(G_n)$
\[
P(\tau,E_{\nu,\psi}) \ \ = \ \   \int_{\G'\backslash G_n/Z_n}
\int_{h\in G_n}\!\!\! \(S\tau \!\cdot \!
E_{\nu,\psi}\)\!(gh)\,\phi(h)\,dh\,dg
\]
does not depend on the choice of $\phi$, provided $\phi$ is normalized by the condition $\int_{G_n} \phi(g)\,dg =
1$. The function $\nu\mapsto P(\tau,E_{\nu,\psi})$ is holomorphic for $\nu \in \C-\{n/2\}$, with at most a  simple
pole at $\nu =  n/2$.
\end{cor}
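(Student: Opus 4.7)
The plan is to deduce the corollary directly from Theorem~\ref{thmpairingo} (i.e., \cite[Theorem~2.29]{pairingpaper}), specialized to the geometric data fixed in the paragraphs leading up to the statement: $r = 2$, $G = G_n$, the unipotent group $U$ of (\ref{pair2}), ambient groups $G_{2n}$ and $G_n$, flag varieties $Y_1 = X_{2n}$ and $Y_2 = Y_n$, distributions $\tau_1 = \tau$ and $\tau_2 = E_{\nu,\psi}$, and character $\chi = \theta$ from (\ref{pair4}). After unwinding (\ref{pair8o})--(\ref{pair11o}) via the definition of $S\tau$ in (\ref{pair5})--(\ref{pair6}), Fubini (justified by the $L^1$ assertion of the theorem) identifies the abstract functional $F_{\tau,\theta,\phi}$ of Theorem~\ref{thmpairingo} with the concrete integrand $S\tau \cdot E_{\nu,\psi}$ of the corollary.

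The hypotheses of Theorem~\ref{thmpairingo} are largely already spelled out in the excerpt: the open orbit and free-modulo-center condition (\ref{pair5o}) is exactly (\ref{pair7}) combined with the open embedding (\ref{pair3}); the triviality of $Z_n$ on the pulled-back line bundle (\ref{pair7o}) is the normalization (\ref{pair8}); cuspidality (\ref{cuspidal3o}) is assumed of $\tau$. The remaining items, (\ref{pair5+1o}) and (\ref{pair10o}), are immediate: for $g \in G_n$ diagonally embedded in $G_{2n}$, conjugation sends $\bigl(\begin{smallmatrix} I & A \\ 0 & I \end{smallmatrix}\bigr)$ to $\bigl(\begin{smallmatrix} I & gAg^{-1} \\ 0 & I \end{smallmatrix}\bigr)$, which has unit Jacobian on $M_{n\times n}(\R)$ and satisfies $\theta(gug^{-1}) = e(\tr(gAg^{-1})) = e(\tr A) = \theta(u)$, while $\theta$ visibly vanishes on $U(\Z) \supset \G \cap U(\Z)$. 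With these hypotheses verified, Theorem~\ref{thmpairingo} delivers the well-definedness of $P(\tau,E_{\nu,\psi})$, its independence of the normalized test function $\phi$, and its continuity in each distributional argument.

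Finally, Proposition~\ref{eisonu} establishes that $\nu \mapsto E_{\nu,\psi}$ is a holomorphic family of distributions in $W_{\nu,\e}^{-\infty}$ on $\C \setminus \{n/2\}$, with at most a simple pole at $\nu = n/2$ (absent when $\psi$ is nontrivial). Combined with the holomorphy-in-parameters clause of Theorem~\ref{thmpairingo}, this directly gives the desired conclusion: $\nu \mapsto P(\tau,E_{\nu,\psi})$ is holomorphic on $\C \setminus \{n/2\}$ with at most a simple pole at $\nu = n/2$. The main, and essentially only, non-formal step is the bookkeeping in the first paragraph matching $F_{\tau,\theta,\phi}$ to $S\tau \cdot E_{\nu,\psi}$; everything else is a matter of quoting already-established results.
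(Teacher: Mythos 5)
Your proposal is correct and follows essentially the same route as the paper, which likewise presents the corollary as a direct specialization of Theorem~\ref{thmpairingo} to the data $r=2$, $U$ as in (\ref{pair2}), $Y_1=X_{2n}$, $Y_2=Y_n$, $\chi=\theta$, with the hypotheses (\ref{pair5o})--(\ref{cuspidal3o}) checked exactly as you do via (\ref{pair3}), (\ref{pair7}), (\ref{pair8}), and the holomorphy clause combined with Proposition~\ref{eisonu}. One trivial wording slip: $\theta$ is \emph{trivial} (equal to $1$), not vanishing, on $U(\Z)$, which is what (\ref{pair10o}) requires.
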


To make (\ref{pair9}) concrete, we identify $X_{2n} \cong G_{2n}/B_{2n}$, $Y_n \cong G_n/P_n$ as before. We regard
$\tau$ and $E_{\nu,\psi}$ as scalar distributions on $G_{2n}$ and $G_n$ respectively, with $\tau$ left invariant
under $\G \subset G_{2n}(\Z)$, transforming according to $\chi_{\l-\rho,\d}$ on the right under $B_{2n}$, and
$E_{\nu,\psi}$ left invariant under $\G_1(N)\subset G_n(\Z)$, transforming according to
$\chi_{\nu-\rho_{\text{mir}},\e}$ on the right under $P_n$. The averaging process (\ref{pair5}) makes sense also on this
level. When we choose $f_1,\, f_2,\, f_3 \in G_n$ so that $(f_1B_n,f_2B_n,f_3P_n)$ lies in the open orbit, we
obtain an explicit description of $S\tau\! \cdot\! E_{\nu,\psi}\,$,
\begin{equation}
\label{pair10}
\gathered
S\tau\! \cdot\! E_{\nu,\psi} (g)\ \ =\qquad
\qquad\qquad\qquad\qquad\qquad\qquad\qquad\qquad\qquad \qquad\qquad\ \
 \\    \f{1}{\operatorname{covol}(\G\cap U(\Z))}
\int_{\G\cap U(\Z)\backslash U} \theta(u)\
\bigl(\ell(u)\,\tau\bigr)\! \( \begin{smallmatrix} {gf_1} & {0_n}
\\  {0_n} & {gf_2} \end{smallmatrix}\) E_{\nu,\psi} (gf_3)\,du \,.
\endgathered
\end{equation}
We note that the $f_j$ are determined up to {\it simul\-taneous left translation\/} by some $f_0\in G_{n}$ and {\it
individual right translation\/} by factors in $B_n$, respectively $P_n$. Translating the $f_j$ by $f_0$ on the left
has the effect of translating $S\tau\!\cdot\!E_{\nu,\psi}$ by $f_0^{-1}$ on the right; it does {\it not\/} change
the value of $P(\tau,E_{\nu,\psi})$ because the ambiguity can be absorbed by $\phi$. Translating any one of the
$f_j$ on the right by an element of the respective isotropy group affects both $S\tau\!\cdot\!E_{\nu,\psi}$ and
$P(\tau,E_{\nu,\psi})$ by a multiplicative factor~-- a non-zero factor depending on $(\l,\d)$ in the case of $f_1$
or $f_2$, and the factor $\chi_{\nu-\rho_{\text{mir}},\e}(p^{-1})$ when $f_3$ is replaced by $f_3 p$, $p\in P_n$.

One can eliminate the potential dependence on $\nu$ in this factor by requiring $f_3\in U_n\,$; cf. (\ref{mira1}).
Specifically, in the following, we choose
\begin{equation}
\label{fixflag} f_1  \ =  \ I_n \,, \ \ \ f_2  \ = \
\(\begin{smallmatrix}  0 & \cdots & 0 & 1  \\  \vdots & & \iddots
&  \\ 0 & 1 & & \\ 1 & 0 & \cdots & 0 \end{smallmatrix}\) \,, \ \
\ \text{and}\ \ \ f_3 \ =\ \(\begin{smallmatrix}  1 &1 & \cdots &
1  \\  0 &  & &  \\ \vdots &  & \textstyle{I_{n-1}}  &  \\ 0  &  &
&  \end{smallmatrix}\) \,,\end{equation} which do determine a
point $(f_1B_n,f_2B_n,f_3P_n)\in X_n\times X_n\times Y_n$ lying in the open orbit. Note that $f_3\in U_n$ and $f_2
= w_{\text{long}}$, in the notation of (\ref{ab9}).

The pairing $P(\tau,E_{\nu,\psi})$ inherits a functional equation from that of $E_{\nu,\psi}$, which involves the contragredient
automorphic distribution $\widetilde \tau$ defined in (\rangeref{ab10}{ab12}).  The argument  we give below for it
works {\em mutatis mutandis} to provide an analogous statement  for the Rankin-Selberg pairing as well.
\medskip

\begin{prop}\label{penufe}
\begin{multline*} P(\tau,E_{-\nu,\psi}) \ \ = \\
(-1)^{\e+\d_{n+1}+\cdots+\d_{2n}}\, N^{2\nu-\f{\nu}{n}-\f12} \, \prod_{j=1}^n
G_{\d_{n+j}+\d_{n+1-j}+\eta}(\l_{n+j}+\l_{n+1-j}+\smallf{\nu}{n}+\smallf{1}{2})  \ \times \\ \times \ \smallf{1}{\phi(N)}\sum_{\srel{a\imod
N}{\xi\in \widehat{(\Z/N\Z)^*}}}    \widehat{\psi}(a)\,\xi(a)\i\,P\(\ell\(\ttwo{-w_{long}}{}{}{w_{long}} \(\begin{smallmatrix}
N & & & \\
 & I_{n-1} & & \\
 & & N & \\
 & & & I_{n-1}
\end{smallmatrix}
\) \)\widetilde \tau,E_{\nu,\xi}\).
\end{multline*}
\end{prop}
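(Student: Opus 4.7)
The strategy is to combine the functional equation of the Eisenstein distribution (Proposition~\ref{eisfe}) with an archimedean ``local'' identity controlling how the pairing $P(\tau,\cdot)$ of Corollary~\ref{thmpairingcor} transforms under the standard intertwining operator $I_\nu: W_{-\nu,\e}^{-\infty}\to\widetilde W_{\nu,\e}^{-\infty}$ of~(\ref{mira7}). The factor $(-1)^\e N^{2\nu-\nu/n-1/2}$ and the finite sum over $a\imod N$ and $\xi\in\widehat{(\Z/N\Z)^*}$ will be inherited directly from Proposition~\ref{eisfe}; the product of $n$ Gamma factors $G_{\d_{n+j}+\d_{n+1-j}+\eta}(\l_{n+j}+\l_{n+1-j}+\nu/n+1/2)$ for $j=1,\ldots,n$ will arise from the local identity, absorbing the single factor $G_\e(\nu-n/2+1)$ of Proposition~\ref{eisfe} via the parity condition $\e+n\eta\equiv \d_1+\cdots+\d_{2n}\imod 2$ imposed in~(\ref{pair8}).

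The key step is the following local identity, which I would prove by direct computation. Define an analogous pairing $P^{*}(\tau,\widetilde f)$ for $\widetilde f\in\widetilde W_{\nu,\e}^{-\infty}$ by repeating the construction (\ref{pair7})--(\ref{pair10}) with $Y_n=G_n/P_n$ replaced throughout by $\widetilde Y_n=G_n/\widetilde P_n$, choosing the third flag representative (in place of $f_3$ in (\ref{fixflag})) to lie in $\widetilde U_n$. The claim is that for $f\in W_{-\nu,\e}^{-\infty}$ varying meromorphically with $\nu$,
\begin{equation*}
P(\tau,f) \ = \ \Bigl(\,{\prod}_{j=1}^{n} G_{\d_{n+j}+\d_{n+1-j}+\eta}(\l_{n+j}+\l_{n+1-j}+\smallf{\nu}{n}+\smallf{1}{2})\Bigr)\cdot P^{*}(\tau,I_\nu f)\,.
\end{equation*}
I would establish this by substituting the integral formula for $I_\nu f$ from Proposition~\ref{expltwineprop} into $P^{*}(\tau,I_\nu f)$ and recognizing the resulting iterated integral, after a change of variables adapted to the block structure of $f_1=I_n$ and $f_2=w_{\text{long}}$ in (\ref{fixflag}), as the expression for $P(\tau,f)$ in (\ref{pair10}) multiplied by a product of $n$ independent one-dimensional Mellin integrals of the type evaluated in (\ref{eisfe15}). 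Each of these one-dimensional integrals evaluates via (\ref{eisen3}) to a single $G_\d$ factor with the indicated shifts, the $j$-th factor coming from the pairing of the $(n+1-j)$-th row of $\tau$ with the $(n+j)$-th row that is built into $f_2=w_{\text{long}}$. Absolute convergence and the interchange of orders of integration are justified by the bound~(\ref{expltwine6}) and the cutoff / integration-by-parts techniques of~\cite{inforder}, as in the proof of Proposition~\ref{eisfe}; cuspidality of $\tau$ supplies the required decay at the boundary of the open orbit.

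With the local identity in hand, I would specialize to $f=E_{-\nu,\psi}$ and apply Proposition~\ref{eisfe} to rewrite $I_\nu E_{-\nu,\psi}$ as the explicit sum of translates of $\widetilde E_{\nu,\xi}$. Inside $P^{*}(\tau,\cdot)$ the left translations by $w_{\text{long}}$ and $\ttwo{N}{}{}{I_{n-1}}$ transfer, by the standard right-translation invariance of the pairing, to right translations on $\widetilde\tau$; combined with the involution (\ref{ab9})--(\ref{twoeis}), which identifies $P^{*}(\tau,\widetilde E_{\nu,\xi})$ with $P(\widetilde\tau, E_{\nu,\xi})$, these transmute into the $2n\times 2n$ block-diagonal shift $\ttwo{-w_{\text{long}}}{}{}{w_{\text{long}}}\operatorname{diag}(N,I_{n-1},N,I_{n-1})$ applied to $\widetilde\tau$ in the statement. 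The extra sign $(-1)^{\d_{n+1}+\cdots+\d_{2n}}$ then emerges from pushing this involution through the transformation law~(\ref{mira6explicated}) on the bottom $n$ rows of $GL(2n,\R)$, where the contragredient is implemented.

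The main obstacle is the local identity. Verifying that the rearranged integral decouples into a product of $n$ separate one-variable integrals---rather than into a more entangled expression---requires a delicate match between the single integration variable of $I_\nu$ and the several integration variables in (\ref{pair10}). The symmetric block structure of $f_1, f_2$ in (\ref{fixflag}) pairs the $j$-th and $(2n+1-j)$-th rows of $\tau$, which is what produces the shifted argument $\l_{n+j}+\l_{n+1-j}$ in each Gamma factor; carrying out this bookkeeping together with the parities and signs is the technical heart of the proof. A uniqueness argument based on the one-dimensionality of the appropriate $\operatorname{Hom}$-space would give a qualitative version of the local identity, but computing the explicit constant, as required for an effective functional equation, necessitates the direct integral computation sketched above.
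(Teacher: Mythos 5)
Your global strategy coincides with the paper's: both compute the $\widetilde Y$-side pairing of $S\tau$ against $I_\nu E_{-\nu,\psi}$ (your $P^{*}(\tau,I_\nu E_{-\nu,\psi})$, the paper's $\mathcal I$ in (\ref{pair16})), once via Proposition~\ref{eisfe} together with the involution (\ref{ab9})--(\ref{twoeis}), and once by an explicit archimedean computation relating it back to $P(\tau,E_{-\nu,\psi})$. The gap lies in that second step, which you yourself flag as the technical heart. First, your local identity carries the wrong constant: the correct relation (the paper's (\ref{pair16g}), stated there for $f=E_{-\nu,\psi}$) reads $P^{*}(\tau,I_\nu f)\cdot\prod_{j=1}^{n}G_{\d_{n+j}+\d_{n+1-j}+\eta}(\l_{n+j}+\l_{n+1-j}+\smallf{\nu}{n}+\smallf12)\,=\,G_\e(\nu-\smallf n2+1)\,P(\tau,f)$, and it is exactly this extra $G_\e(\nu-\f n2+1)$, produced by the archimedean integral itself, that cancels the $G_\e(\nu-\f n2+1)$ coming from Proposition~\ref{eisfe}. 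The parity condition (\ref{pair8}) cannot ``absorb'' $G_\e(\nu-\f n2+1)$ into the product of shifted factors --- there is no such identity among the functions $G_\d$ --- so your version of the identity would leave a spurious factor $G_\e(\nu-\f n2+1)$ in the final formula, already visibly inconsistent with the full-level specialization (\ref{penufefulllevel}).

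Second, the mechanism you propose for proving the local identity fails: the integral does not decouple into $n$ independent one-variable Mellin integrals. The intertwining operator supplies only $n-1$ integration variables (coordinates on $U\cong\R^{n-1}$), and after the conjugation by $a_x$ in (\ref{pair21})--(\ref{pair24}) the entire archimedean content is the single coupled integral $\mathcal H$ of (\ref{pair26}), in which the factor $\left|1+\sum_j x_j\right|^{\nu-n/2}$ ties all the variables together. Its value, supplied by Lemma~\ref{betalikeint} (proved by a Fourier/Gamma-factor trick, not by separation of variables), is $G_\e(\nu-\f n2+1)$ times $n-1$ factors at the arguments $-\l_{n+j}-\l_{n+1-j}-\f\nu n+\f12$, divided by one further $G$-factor; only after applying $G_\d(s)G_\d(1-s)=(-1)^\d$ together with the constraints (\ref{pair8}) does this become the reciprocal of the product of $n$ factors at $\l_{n+j}+\l_{n+1-j}+\f\nu n+\f12$, and those flips, together with the manipulation (\ref{esfe2a}) needed when transporting the left translations onto $\widetilde\tau$ (not the transformation law (\ref{mira6explicated}), which concerns the mirabolic series on $GL(n)$), are what produce the sign $(-1)^{\e+\d_{n+1}+\cdots+\d_{2n}}$. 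A product of $n$ independent one-dimensional integrals would miss the denominator factor (which becomes the $j=n$ term) and the sign structure entirely, so the explicit constant cannot be obtained the way you sketch; the coupled beta-like evaluation is unavoidable.
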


The  pairings on the right hand side are integrations over the quotient $\G^\ast\backslash G_n/Z_n$, where
\begin{equation}\label{Gast}
\G^\ast \ \ = \ \ w_{long}\ttwo{N}{}{}{I_{n-1}}\widetilde \G' \ttwo{N}{}{}{I_{n-1}}\i w_{long} \ \ \ \ \ \ (\,\widetilde\G'\,=\,\{\widetilde\g\,|\,\g\in\G'\}\,)
\end{equation}
 is the subgroup that $S\widetilde\tau\cdot E_{\nu,\xi}$ is naturally invariant under (cf.~(\ref{Gammaprime})).
In the special case that $\tau$ is invariant under $GL(2n,\Z)$, $N=1$, $\psi={\mathbbm 1}$ is the trivial Dirichlet
character, and $\e\equiv \eta\equiv 0\imod 2$, the relation simplifies to
\begin{equation}\label{penufefulllevel}
\gathered
    P(\tau,E_{\nu,{\mathbbm 1}})  \ \ = \qquad
     \qquad \qquad \qquad \qquad \qquad \qquad \qquad  \qquad \qquad  \qquad  \qquad \qquad\\  \ \ \ (-1)^{\d_1+\cdots+
\d_{n}}\, {\prod}_{j\,=\,1}^n
G_{\d_{n+j}+\d_{n+1-j}}(\l_{n+j}+\l_{n+1-j}- \textstyle{\f{\nu}n +
\f 12})\,P(\widetilde{\tau},E_{-\nu,{\mathbbm 1}})\,.
\endgathered
\end{equation}
A similar formula using the second displayed line in proposition~\ref{eisfe} of course also gives a simplified functional equation when $\psi$ is primitive, though we will not need to use this formula in what follows.

\noindent {\bf Proof:} In analogy to   $S\tau\! \cdot\! E_{\nu,\psi}$ in (\ref{pair10}), one can define a product $S\tau\!
\cdot\! \widetilde \rho$  of $S\tau$ and any distribution section $\widetilde \rho$ of $\L_{\nu-\rho_{\text{mir}},\e} \to
\widetilde Y$   as
\begin{equation}
\label{pair12}
\gathered
S\tau \cdot  \widetilde \rho(g)\ \ = \qquad
\qquad\qquad\qquad\qquad\qquad\qquad\qquad\qquad\qquad \qquad\qquad\ \  \ \ \ \ \  \\ \f{1}{\operatorname{covol}(\G\cap U(\Z))}
\int_{\G\cap U(\Z)\backslash U} \theta(u)\
\bigl(\ell(u)\,\tau\bigr)\! \( \begin{smallmatrix} {g
\widetilde{f_2}} & {0_n} \\  {0_n} & {g\widetilde{f_1}}
\end{smallmatrix}\) \widetilde \rho (g\widetilde{f_3})\,du \,.
\endgathered
\end{equation}
Here we have applied the outer automorphism (\ref{ab9}) to the base points $f_1B_n$, $f_2B_n$, $f_3P_n$, and also
switched the order of the two factors $X_n$. This choice of base points is in effect {\it only\/} when we multiply
$S\tau$, or $S\widetilde \tau$, by a section of $\widetilde\L_{\nu-\rho_{\text{mir}},\e} \to \widetilde Y$  such as $\widetilde
E_{\nu,\xi}$ or $I_\nu E_{-\nu,\psi}$\,, rather than by $E_{-\nu,\psi}$\,; it is used internally in this proof, but not elsewhere in the paper.

Though corollary~\ref{thmpairingcor} as stated does not apply to (\ref{pair12}) when $\widetilde\rho=\widetilde
E_{\nu,\xi}$ or $I_\nu E_{-\nu,\psi}$, its conclusions apply so long as $\G'$ is appropriately modified to take into
account the invariance group of $\widetilde\rho$.  This can be seen either as a consequence of the general
statement theorem~\ref{thmpairingo}, or alternatively  deduced directly from corollary~\ref{thmpairingcor} using
the outer automorphism (\ref{ab9}).  Let $\phi\in C_c^\infty(G_n)$ have $\int_{G_n}\phi(h)dh=1$. The proof of the
proposition involves computing the integral
\begin{equation}
\label{pair16}
\mathcal I \ \ = \ \  \displaystyle \int_{\G'\backslash G_n/Z_n} \int_{G_n}
\(S\tau\! \cdot\! I_\nu E_{-\nu,\psi}\)\! ( gh)\,
\phi(h)\,dh\,dg
\end{equation}
in two different ways.  The first involves inserting the formula for $I_\nu E_{-\nu,\psi}$ from
proposition~\ref{eisfe}, obtaining
\begin{equation}\label{pair16b}
\gathered
   \mathcal I \ \ =  \ \ (-1)^\e\,N^{2\nu-\f{\nu}{n}-\f12}\,G_\e(\nu-\smallf{n}{2}+1)\,\smallf{1}{\phi_{\text{Euler\!}}(N)} \  \times \qquad
        \qquad\qquad\qquad\qquad\qquad \ \ \ \\
\sum_{\srel{a\imod
N}{\xi\in \widehat{(\Z/N\Z)^*}}}    \widehat{\psi}(a)\,\xi(a)\i\,
    \int_{\G'\backslash G_n/Z_n} \int_{G_n}\!\!\!
\(S\tau\! \cdot\! \ell\(\ttwo{I_{n-1}}{}{}{N}w_{long}\)\widetilde E_{\nu,\xi}  \)
\! ( gh)\,
\phi(h)\,dh\,dg
\endgathered
\end{equation}
(we have denoted the Euler $\phi$-function as $\phi_{\text{Euler}}$ here in order to avoid confusing it with the smooth function $\phi$ in the integrand).
The integral can be written as
\begin{equation}\label{pair16c}
\gathered
    \f{1}{\operatorname{covol}(\G\cap U(\Z))}  \displaystyle \int_{  \G'\backslash G_n/Z_n} \int_{G_n}
 \int_{\G\cap U(\Z)\backslash U}    \theta(u) \ \times \qquad
 \qquad\qquad \qquad\qquad \\ \qquad \times \
\bigl(\ell(u)\,\tau\bigr)\! \( \begin{smallmatrix} {gh
\widetilde{f_2}} & {0_n} \\  {0_n} & {gh\widetilde{f_1}}
\end{smallmatrix}\)  \widetilde E_{\nu,\xi} \(w_{long}\ttwo{I_{n-1}}{}{}{N}\i gh\widetilde{f_3}\)\,du \,\phi(h)\, dh\,dg\,.
\endgathered
\end{equation}

We now change variables $g \mapsto \widetilde g$, $h\mapsto \widetilde h$, and then apply identities (\ref{ab10})
and (\ref{twoeis}), after which we must replace $\G'$ by $\widetilde{\G}'$:~the integral becomes
\begin{equation}\label{pair16d}
\gathered
    \f{1}{\operatorname{covol}(\G\cap U(\Z))}  \displaystyle \int_{\widetilde \G'\backslash G_n/Z_n} \int_{G_n}
 \int_{\G\cap U(\Z)\backslash U}    \theta(u)\ \times \qquad
        \qquad\qquad\ \   \\ \qquad
        \times \
\widetilde\tau\!\(\widetilde{u}\i  \!\(\begin{smallmatrix} {gh
f_1} & {0_n} \\  {0_n} & {gh f_2}
\end{smallmatrix}\) \)  E_{\nu,\xi} \(w_{long}\ttwo{N}{}{}{I_{n-1}} gh f_3\)\,du \,\phi(\widetilde h)\, dh\,dg\,.
\endgathered
\end{equation}
The above expression is unchanged if both instances of $\G$ are replaced by any finite index subgroup, in
particular the principal congruence subgroup $\G(m)=\{\g\in G_{2n}(\Z)|\g\equiv I_{2n}\imod m\}$ for some $m$ (and
hence any positive multiple of it).  The change of variables $u\mapsto \widetilde u\i=w_{long}u^t w_{long}$
preserves $\G(m)$, $U(\Z)$, $U$, the character $\theta$, and the Haar measure $du$; it allows us to rewrite
(\ref{pair16d}) as
\begin{equation}\label{pair16e}
\gathered
    \f{1}{\operatorname{covol}(\G(m)\cap U(\Z))}  \displaystyle \int_{\widetilde \G'\backslash G_n/Z_n} \int_{G_n}
 \int_{\G(m)\cap U(\Z)\backslash U}    \theta(u)\ \times \qquad\qquad \\ \qquad \times \
\widetilde\tau\!\(u \! \(\begin{smallmatrix} {gh
f_1} & {0_n} \\  {0_n} & {gh f_2}
\end{smallmatrix}\) \)  E_{\nu,\xi} \(w_{long}\ttwo{N}{}{}{I_{n-1}} gh f_3\)\,du \,\phi(\widetilde h)\, dh\,dg\,.
\endgathered
\end{equation}
We may freely replace $\phi(h)$ with $\phi(\widetilde h)$ because corollary~\ref{thmpairingcor}
 guarantees that this substitution of smoothing function does not affect the overall value.  Since \begin{equation}
\label{esfe2a}
\begin{aligned}
{ \widetilde\tau}\( {\ttwo{I_n}{A}{0_n}{I_n}} \ttwo{g_1}{0_n}{0_n}{g_2}\) \ &= \  {
\widetilde\tau}\({\ttwo{-I_n}{0_n}{0_n}{I_n}} {\ttwo{I_n}{A}{0_n}{I_n}}^{-1}
\ttwo{g_1}{0_n}{0_n}{g_2}{\ttwo{-I_n}{0_n}{0_n}{I_n}}\)
\\
&=\ (-1)^{{\d}_{n+1}+\cdots +{\d}_{2n}} \, {
 \widetilde\tau}\({\ttwo{-I_n}{0_n}{0_n}{I_n}}{\ttwo{I_n}A{0_n}{I_n}}^{-1}
\ttwo{g_1}{0_n}{0_n}{g_2}\),
\end{aligned}
\end{equation}
we may replace the $u$ in the argument of $\widetilde\tau$ by $u\i$, so long as we left translate it by
$\ttwo{-I_{n}}{}{}{I_n}$ and multiply the overall expression by $(-1)^{\d_{n+1}+\cdots+\d_{2n}}$.  Replacing $g$
by $g\mapsto \ttwo{N}{}{}{I_{n-1}}\i w_{long}g$ converts $\widetilde\G'$ into $\Gamma^\ast$, and nearly converts
(\ref{pair16e})  into
\begin{equation}\label{pair16ef}(-1)^{\d_{n+1}+\cdots+\d_{2n}}P\(\ell\(
\ttwo{-w_{long}}{}{}{w_{long}}
\(\begin{smallmatrix}
N & & & \\
 & I_{n-1} & & \\
 & & N & \\
 & & & I_{n-1}
\end{smallmatrix}
\)
\)\widetilde \tau,E_{\nu,\xi}\);\end{equation}
the only difference is that the $u$-integration is changed by the presence of these two matrices that left-translate $\widetilde\tau$.
The compensating change of variables in $u$ that undoes this conjugation  preserves the character $\theta$, but
alters $\G(m)$ because some nondiagonal entries are multiplied or divided by $N$.  Were $m$ replaced by $mN$ in
(\ref{pair16e}) this conjugate would still be a subgroup of $\G$, and hence its normalized $u$-integration would have the
same value.
 We conclude that
\begin{equation}\label{pair16f}
\gathered
    \mathcal I \ \ = \ \ (-1)^{\e+\d_{n+1}+\cdots+\d_{2n}} \,N^{2\nu-\f{\nu}{n}-\f 12}\,G_\e(\nu-\smallf{n}{2}+1)\, \smallf{1}{\phi(N)} \ \times \qquad
    \qquad \ \ \  \ \ \  \     \\    \
    \sum_{\srel{a\imod
N}{\xi\in \widehat{(\Z/N\Z)^*}}}    \widehat{\psi}(a)\,\xi(a)\i\, P\(\ell\(
\ttwo{-w_{long}}{}{}{w_{long}}
\(\begin{smallmatrix}
N & & & \\
 & I_{n-1} & & \\
 & & N & \\
 & & & I_{n-1}
\end{smallmatrix}
\)
\)\widetilde \tau,E_{\nu,\xi}\).
\endgathered
\end{equation}

The proof of the proposition now reduces to demonstrating that
\begin{equation}\label{pair16g}
    \mathcal I \ \ = \ \  \f{G_\e(\nu-\smallf n2+1)}{\prod_{j=1}^{n}G_{\d_{n+j}+\d_{n+1-j}+\eta}(\l_{n+j}+\l_{n+1-j}+\smallf{\nu}{n}+\smallf 12)}
    \ P(\tau,E_{-\nu,\psi})\,.
\end{equation}
By combining (\ref{pair12}) and (\ref{pair16}), $\mathcal I$ can be written as \begin{equation}\label{}
\gathered
    \mathcal I \ \ = \ \
   \f{1}{\operatorname{covol}(\G\cap U(\Z))}
    \int_{\G'\backslash G_n/Z_n}\int_{G_n}
\int_{\G\cap U(\Z)\backslash U} \theta(u)  \ \times \qquad\qquad
\\ \qquad \qquad \qquad \times \
\bigl(\ell(u)\,\tau\bigr)\! \( \begin{smallmatrix} {gh\widetilde f_2} & {0_n}
\\  {0_n} & {gh\widetilde f_1} \end{smallmatrix}\) I_\nu E_{-\nu,\psi} (gh\widetilde f_3)\,du     \,\phi(h)\,dh\,dg\,.
\endgathered
\end{equation}
Right translating $h$ by $w_{long}$ converts $h\widetilde f_1=h$ to $hw_{long}=hf_2$, and $h\widetilde
f_2=hw_{long}$ to $h=hf_1$.  It also changes $\phi(h)$ to $\phi(hw_{long})$; however, this change can be undone by
replacing $\phi(g)$ with $\phi(gw_{long})$, as both functions have the same total integral over $G_n$.  Hence
$\mathcal I$ can be expressed as
\begin{equation}\label{pair16i}
\gathered
        \mathcal I \ \ = \ \
   \f{1}{\operatorname{covol}(\G\cap U(\Z))}
    \int_{\G'\backslash G_n/Z_n}\int_{G_n}
\int_{\G\cap U(\Z)\backslash U} \theta(u)  \ \times
\qquad\qquad
\\ \qquad \qquad \qquad \times \
\bigl(\ell(u)\,\tau\bigr)\! \( \begin{smallmatrix} {ghf_1} & {0_n}
\\  {0_n} & {ghf_2} \end{smallmatrix}\) I_\nu E_{-\nu,\psi} (ghw\widetilde f_3)\,du     \,\phi(h)\,dh\,dg\,.
\endgathered
\end{equation}

We shall now use the definition (\ref{mira8}) of the intertwining operator $I_\nu$. Since this involves an integral
over the non-compact manifold $U_n$, it might seem that the formula cannot be applied to the distribution
$E_{-\nu,\psi}$. However, the self-adjointness property (\ref{expltwine7}) justifies the calculations we are about
to present. In effect, the calculations with $E_{-\nu,\psi}$ reflect legitimate operations on the dual side. This is
completely analogous to applying the calculus of differential operators to distributions as if they were functions.
The duality depends on interpreting $\phi$ as a $C^\infty$ section of a line bundle over $X_n\times X_n\times Y_n$,
the mirror image of viewing the distribution section $S\tau\! \cdot \!E_{-\nu,\psi}$ as a   scalar
distribution\begin{footnote}{Strictly speaking, we should work with a smoothing function $\phi\in C^\infty_c(
G_n/Z_n)$ instead of $\phi\in C^\infty_c(G_n)$, but this makes little difference for the rest of the argument.}
\end{footnote} on $G_n$. In effect, we interpret the $h$-integration as the pairing of a distribution section of one
line bundle against a smooth section of the dual line bundle, tensored with the line bundle of differential forms
of top degree, by integration over the compact manifold $X_n\times X_n\times Y_n$. In a slightly different setting,
this process is carried out in the proof of lemma 3.9 in \cite{pairingpaper}. What matters is that $G_n$ acts on
$X_n\times X_n\times Y_n$ with an open orbit.   In any case, applying the definition (\ref{mira8}) of
$I_\nu$, the notation $u(x)$ in proposition \ref{expltwineprop}, and the definition (\ref{ab9}) of the automorphism
$g\mapsto \widetilde g$, we find
\begin{equation}
\label{pair19}
\begin{aligned}
&  \!\!\!\!\!\!\!\!\!  I_\nu E_{-\nu,\psi} (gh w_{\text{long}}\widetilde{f_3}) \\
&=
\int_{\R^{n-1}}  E_{-\nu,\psi} (gh ({f_3}^{-1})^t u(x))\,dx
\\
&=  \int_{\R^{n-1}}\!\!  E_{-\nu,\psi} \textstyle \bigl(g\,h
\,u\!\(\f{\displaystyle{x}}{1-\sum x_j }\)\bigr)\left|1 - \textstyle\sum_j
x_j\right|^{-\nu-n/2} \sgn(1 -\textstyle \sum_j
x_j)^\e dx
\\
&=  \int_{\R^{n-1}}\!\!  E_{-\nu,\psi}  \bigl(g\,h\, u(x)\bigr)\left|1
+ \textstyle\sum_j x_j\right|^{\nu-n/2}\sgn(1
+ \textstyle\sum_j x_j)^\e\,dx\,;
\end{aligned}
\end{equation}
the equality at the second step follows from the transformation law (\ref{mira6explicated}) and the matrix identity
\begin{equation}\label{esfe7}
\begin{aligned}
&\left( \begin{smallmatrix} \ 1 & 0  & \cdots & 0 \\
{ }_{\scriptstyle{-1}} & { }_{\scriptstyle{1}} &  &  \\
 \vdots &  & \ \ \ddots &  \\
 -1 &    &  & 1
\end{smallmatrix} \right)
\left( \begin{smallmatrix} \ 1 & x_{n-1} & \cdots  & x_1 \\
{ }_{\scriptstyle{0}} & { }_{\scriptstyle{1}} &   &  \\
 \vdots & & \ \ \ddots &  \\
 0 &    &  & 1
 \end{smallmatrix}
\right)    \ \ =
\\
&\qquad\qquad\qquad =\ \   \left( \begin{smallmatrix} \ 1 & \f{x_{n-1}}{1-\sum x_j} & \cdots & \f{x_{1}}{1-\sum x_j}  \\
{ }_{\scriptstyle{0}} & { }_{\scriptstyle{1}} & &  \\
 \vdots &  & \ \ \ddots &  \\
 0 &    &  & 1
\end{smallmatrix} \right)
\left( \begin{smallmatrix} \f{1}{1-\sum x_j} & \  0 & \cdots & 0 \\
{ }_{\scriptstyle{-1}} &  &  &  \\
\vdots&{\ }^{\textstyle *} & &  \\
-1 &  &  &
\end{smallmatrix} \right)\,,
\end{aligned}
\end{equation}
and the third step in (\ref{pair19}) from the change of coordinates $x_j\mapsto x_j (1+\sum_j x_j)^{-1}$. To ensure
convergence of the integral -- or rather, of the corresponding integral on the dual side -- we suppose $\re \nu >
n/2-1$.

We now combine (\ref{pair16i}) with (\ref{pair19}). The resulting expression for $\mathcal I$ involves four
integrals: the integrals over $\R^{n-1}$ and $(\Gamma\cap U(\Z))\backslash U$ on the inside -- in either order, since they are
independent -- then the $h$-integral, and finally the integral over $G_n(\Z)\backslash G_n/Z_n$ on the outside. We
claim that we can interchange the order of integration, to put the integration over $\R^{n-1}$ on the
outside\begin{footnote}{The integration over $U(\Z)\backslash U$ must remain on the inside; it is necessary to make
sense of $S\widetilde\tau$ as a distribution section over $X_n\times X_n$.}\end{footnote}: we can use partitions of
unity to make the integrands for all the integrals have compact support.   Then, using the definition of
operations on distributions using the duality between distributions and smooth functions, the expression is converted into one for which
Fubini's theorem applies. In terms of our specific choice of flags (\ref{fixflag}), this means
\begin{equation}\label{pair20b}
\gathered
        \mathcal I \ \ = \ \
   \f{1}{\operatorname{covol}(\G\cap U(\Z))}          \int_{\R^{n-1}} \int_{\G'\backslash G_n/Z_n}
\int_{G_n} \int_{\G\cap U(\Z)\backslash U}
\theta(u)\ \times \ \
\\
 \qquad \qquad \times\  \bigl(\ell(u)\,\tau\bigr)\! \(
\begin{smallmatrix} {g h} & {0_n} \\  {0_n} & {g
h{w_{\text{long}}}} \end{smallmatrix}\) E_{-\nu,\psi}  \bigl(g\,h\,
u(x)\bigr)\left|1 + \textstyle\sum_j
x_j\right|^{\nu-n/2} \\ \times \  \sgn(1 + \textstyle\sum_j
x_j)^{\e}
\phi(h)\,du\,dh\,dg\,dx\,.
\endgathered
\end{equation}
Neglecting a set of measure zero, we may integrate over $(\R^*)^{n-1}$ instead of $\R^{n-1}$.  For $x\in
(\R^*)^{n-1}$, $u(x)$ is conjugate to $f_3$ under the diagonal Cartan subgroup of $G_n$,
\begin{equation}
\label{pair21} u(x)\ = \ a_x^{-1}f_3\,a_x\,,\ \ \ \text{with}\ \
a_x \ \ = \ \ \(
\begin{smallmatrix} 1 & & & & \\  & x_{n-1} & & {\textstyle{0}} & \\ & & \ddots & & \\ & {\textstyle{0}} & & x_2 & \\ & & & & x_1 \end{smallmatrix}
\)  \,.
\end{equation}
We now change variables to replace $h$ by $ha_x$.  The identity
\begin{equation}
\label{pair22} E_{-\nu,\psi}(g\,h\,a_x\,u(x)) \ = \ \left| \,
{\prod}_{j=1}^{n-1}\, x_j\,\right|^{-\f \nu n -\f 12} \!\!\!
\sgn\({\prod}_{j=1}^{n-1}\, x_j\)^\eta
\!\!\!
E_{-\nu,\psi}(g\,h\,f_3)\,.
\end{equation}
follows from the transformation law (\ref{mira4}) because the representation $W_{\nu,\d}$ has been tensored by
$\sgn(\det(\cdot))^\eta$ (see the comments between (\ref{pair6}) and (\ref{pair8})). Similarly the identity
\begin{equation}
\label{pair23} w_{\text{long}}^{-1}\, a_x \,w_{\text{long}}\ \ = \
\ \(
\begin{smallmatrix} x_1^{ } & & & & \\  & x_2 & & {\textstyle{0}} & \\ & & \ddots & & \\ & {\textstyle{0}} & & x_{n-1} & \\ & & & & 1 \end{smallmatrix}
\)
\end{equation}
and the transformation law (\ref{ps9})  imply
\begin{equation}
\label{pair24}
\begin{aligned}
&\!\!\bigl(\ell(u)\,\tau\bigr)\! \( \begin{smallmatrix}
{g h a_x} & {0_n} \\  {0_n} & {g h a_x {w_{\text{long}}}}
\end{smallmatrix}\) \ =
\\
&\ \ \ = \ \(\prod_{j=1}^{n-1} |x_j|^{-\l_{n+j}-\l_{n+1-j}} (\sg
x_j )^{\d_{n+j}+\d_{n+1-j}}\)
\bigl(\ell(u)\,\tau\bigr)\! \( \begin{smallmatrix} {g h}
& {0_n} \\  {0_n} & {g h{w_{\text{long}}}} \end{smallmatrix}\) .
\end{aligned}
\end{equation}
Therefore these characters of the $x_j$ may be moved to the outermost integral in (\ref{pair20b}).
The only remaining instance of $x$ in the inner three integrations is in the argument of the test function, $\phi(ha_x)$.  By the same reasoning as before,    $h\mapsto \phi(ha_x)$ has
total integral one, just like $\phi$.  Since these inner three integrations define the pairing $P(\tau,E_{-\nu,\psi})$, they depend only on this total integral, and hence their value is unchanged if $a_x$ is removed from the argument of $\phi$.
 The $x$-integral in
(\ref{pair20b}) splits off to give
\begin{equation}
\label{pair25b}
\mathcal I \ \ = \ \
\mathcal H \,\times \,P(\tau,E_{-\nu,\psi})\,,
\end{equation}
with
\begin{equation}
\label{pair26}
\begin{aligned}
&\mathcal H \ = \ \int_{\R^{n-1}}\, \left| \,1 +
{\sum}_{j=1}^{n-1} x_j\,\right|^{\nu-n/2}\,\sgn\(1 +
{\sum}_{j=1}^{n-1} x_j\)^\e \ \times
\\
&\qquad\qquad\ \ \ \times \(\,\prod_{j=1}^{n-1}
|x_j|^{-\l_{n+j}-\l_{n+1-j}-\nu/n-1/2} (\sg x_j
)^{\d_{n+j}+\d_{n+1-j}+\eta}\) dx\,.
\end{aligned}
\end{equation}
This integral can be explicitly evaluated: according to lemma \ref{betalikeint} below,
\begin{equation}
\label{pair27}
\gathered
\mathcal H\ = \ (-1)^{\d_2+\cdots + \d_{2n-1}+(n-1)\eta} \ \times \qquad\qquad\qquad\qquad\qquad
\qquad\qquad\qquad\qquad
\\   \times \ \f{G_\e(\nu \! - \! \f n 2 \! + \!
1)\,{\prod}_{j=1}^{n-1}\,G_{\d_{n+j}+\d_{n+1-j}+\eta}  (-\l_{n+j} \! -
\! \l_{n+1-j} \! - \!  \f \nu n \! + \! \f 12)  }{G_{\e+\d_2+\cdots
+\d_{2n-1}+(n-1)\eta}(\nu\! -\! \f n2 \! + \!1\! -\! \l_2 \! + \! \cdots \!
- \! \l_{2n-1}\! - \!\f {n-1}{n} \nu \! +\! \f {n-1}2)}\,.
\endgathered
\end{equation}
At this point, the hypothesis (\ref{pair8}) and the identity
\begin{equation}\label{Gdfe}
G_\d(s)\,
G_\d(1-s) \ \   = \ \  (-1)^\d
\end{equation}
 (which follows directly from (\ref{eisen3b})), establish (\ref{pair16g}) and hence the proposition. \bx

\begin{lem}
\label{betalikeint} For $t\in \R^n$, $t_n \neq 0$, the integral
\begin{multline*}
\int_{\R^{n-1}}  \left| \, t_n  \ \,-\, \ {\textstyle{\sum}_{j=1}^{n-1}} \,t_j\,\right|^{\b_0-1} \, \sgn\( t_n  \
\,-\, \ {\textstyle{\sum}_{j=1}^{n-1}} \,t_j\)^{\eta_0}\ \times \\ \times \  {\prod}_{j=1}^{n-1}\bigl(\,
|t_j|^{\b_j-1}\sgn(t_j)^{\eta_j}\bigr) dt_1\cdots
    dt_{n-1} \, ,
\end{multline*}
converges absolutely when the real parts of $1-\b_0-\b_1-\cdots -\b_{n-1}$   and of the $\b_j$ are all positive. As
a function of the $\b_j$ it extends meromorphically to all of $\C^n$, and equals
\begin{equation*}
\f{G_{\eta_0}(\b_0)G_{\eta_1}(\b_1)\cdots
    G_{\eta_{n-1}}(\b_{n-1})}{G_{\eta_0+\eta_1+\cdots+\eta_{n-1}}(\b_0+
    \b_1+\cdots+\b_{n-1})}\,|t_n|^{\b_0+\b_1+\cdots +\b_{n-1}-1}\,(\sgn t_n)^{\eta_0+\eta_1+\cdots+\eta_{n-1}}\,.
\end{equation*}
\end{lem}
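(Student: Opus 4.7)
The integral is invariant under the scaling $t_j \mapsto t_n s_j$ ($j=1,\dots,n-1$) up to the factor $|t_n|^{\beta_0+\beta_1+\cdots+\beta_{n-1}-1}\sgn(t_n)^{\eta_0+\eta_1+\cdots+\eta_{n-1}}$ that also appears on the right hand side of the claimed formula: a short bookkeeping with the Jacobian shows this.  Thus one reduces at once to the case $t_n=1$.  Absolute convergence in the stated region is straightforward~-- near $0$ each factor $|t_j|^{\beta_j-1}$ is locally integrable when $\re\b_j>0$, while at infinity the integrand decays like $\prod|t_j|^{\b_j-1}|\sum t_j|^{\b_0-1}$, which is integrable when $\re(1-\b_0-\cdots-\b_{n-1})>0$.

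The plan is to proceed by induction on $n$.  The base case $n=2$ reads
\[
\int_\R |1 - t_1|^{\b_0-1}\sgn(1-t_1)^{\eta_0}\,|t_1|^{\b_1-1}\sgn(t_1)^{\eta_1}\,dt_1 \ =\ \f{G_{\eta_0}(\b_0)G_{\eta_1}(\b_1)}{G_{\eta_0+\eta_1}(\b_0+\b_1)}\,,
\]
which I would derive by Fourier analysis using the definition (\ref{eisen3}): the scaling property of the Fourier transform upgrades that definition to the identity
\[
\int_\R e(x\xi)\,\sgn(x)^\eta|x|^{\b-1}\,dx \ = \ G_\eta(\b)\,|\xi|^{-\b}\sgn(\xi)^\eta \,,
\]
valid as tempered distributions via the meromorphic continuation one gets from (\ref{eisen3b}).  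Since the left hand side of the displayed identity is the convolution of the two signed power functions evaluated at~$1$, Fourier transform turns convolution into a product of two such Mellin-type factors $|\xi|^{-\b_j}\sgn(\xi)^{\eta_j}$; the product is itself a signed power function, and Fourier inversion (again via (\ref{eisen3}) in reverse) produces $\,G_{\eta_0+\eta_1}(\b_0+\b_1)\i G_{\eta_0}(\b_0)G_{\eta_1}(\b_1)$ times $|1|^{\b_0+\b_1-1}$.  This is exactly the type of manipulation carried out in \cite{inforder}, whose techniques justify the tempered distribution calculation outside the range of absolute convergence.

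For the induction step, let $u = 1 - \sum_{j=1}^{n-2}t_j$ and first integrate out $t_{n-1}$.  The inner integral has exactly the form of the base case with parameters $(\b_0,\eta_0)$ and $(\b_{n-1},\eta_{n-1})$ and outer variable $u$, yielding (after restoring the homogeneity factor)
\[
\f{G_{\eta_0}(\b_0)G_{\eta_{n-1}}(\b_{n-1})}{G_{\eta_0+\eta_{n-1}}(\b_0+\b_{n-1})}\,|u|^{\b_0+\b_{n-1}-1}\sgn(u)^{\eta_0+\eta_{n-1}}\,.
\]
What remains is an $(n-2)$-dimensional integral of exactly the type treated by the inductive hypothesis, with the parameter pair $(\b_0,\eta_0)$ replaced by the merged pair $(\b_0+\b_{n-1},\eta_0+\eta_{n-1})$.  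The inductive hypothesis contributes $G_{\eta_0+\eta_{n-1}}(\b_0+\b_{n-1})\prod_{j=1}^{n-2}G_{\eta_j}(\b_j)/G_{\sum\eta}(\sum\b)$, and the factor $G_{\eta_0+\eta_{n-1}}(\b_0+\b_{n-1})$ cancels telescopically against the denominator produced in the base step, leaving precisely the formula claimed.

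The main technical obstacle is justifying the Fourier-theoretic step in the base case outside the strip of absolute convergence, since $|x|^{\b-1}\sgn(x)^\eta$ is not in $L^1$ or $L^2$ for general $\b$.  However, both sides of the lemma's claimed formula extend meromorphically in $(\b_0,\dots,\b_{n-1})\in\C^n$~-- the right hand side via $G_\eta(s)$, the left hand side via the distribution-theoretic regularization detailed in \cite{inforder}~-- so it suffices to establish the identity on the open set where all integrals converge absolutely, and invoke the identity principle for meromorphic functions of several complex variables to conclude in general.
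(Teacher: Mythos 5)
Your argument is correct, and it reaches the formula by a genuinely different organization than the paper's. The paper does not induct on $n$ for the evaluation: after the same scaling reduction to $I(1)$, it multiplies $I(t_n)$ by $e(t_n)$, integrates in $t_n$, interchanges the order of integration and changes variables so that the whole thing splits at once into a product of $n$ one-dimensional integrals of the type (\ref{eisen3}), giving $\prod_j G_{\eta_j}(\b_j) = G_{\eta_0+\cdots+\eta_{n-1}}(\b_0+\cdots+\b_{n-1})\,I(1)$; the conditional convergence of that $n$-fold Fourier integral is handled by the partition-of-unity/integration-by-parts techniques of \cite{inforder}. Your route instead isolates the one-dimensional beta-type identity as a base case (your convolution/Fourier derivation of it is essentially the paper's argument specialized to one variable, so the analytic input is the same) and then runs an induction in which the merged parameter $(\b_0+\b_{n-1},\eta_0+\eta_{n-1})$ produces the telescoping cancellation of $G_{\eta_0+\eta_{n-1}}(\b_0+\b_{n-1})$. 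What your scheme buys is that, on the open region $\Re\b_j>0$, $\Re(\b_0+\cdots+\b_{n-1})<1$, every step beyond the base case is an absolutely convergent iterated integral justified by Fubini, so all delicate regularization is confined to dimension one; what the paper's one-shot computation buys is brevity and the immediate appearance of the symmetric product formula, at the cost of justifying the interchange for a conditionally convergent $n$-fold integral. Two small points to tighten: in your convergence sketch the local-integrability requirement $\Re\b_0>0$ is needed along the hyperplane $\sum_j t_j = t_n$ (your "near $0$" remark only covers the coordinate hyperplanes), and at infinity the homogeneity count $\sum_j\Re\b_j-n$ together with integrability of the singularities on the unit sphere is what gives the condition $\Re(\b_0+\cdots+\b_{n-1})<1$ -- the paper instead proves absolute convergence by its own induction on $n$; also, your final appeal to meromorphic continuation presupposes that the left-hand side continues meromorphically, which, as you note, rests on the regularization of \cite{inforder} rather than on the identity principle alone.
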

\bigskip

\noindent{\bf Proof:} First we show that absolute convergence implies the formula we want to prove. We let $I(t_n)$
denote the value of the integral. Changing variables appropriately one finds
\begin{equation}
\label{betalikeint1} I(t_n) \ = \ |t_n|^{\b_0+\b_1+\cdots
+\b_{n-1}-1} \sgn(t_n)^{\eta_0+\eta_1+\cdots+\eta_{n-1}}I(1)\,.
\end{equation}
Recall the defining formula (\ref{eisen3}). Integration of the right hand side of the equality (\ref{betalikeint1})
against the function $e(t_n)$ results in the expression
\begin{equation}
\label{betalikeint2} G_{\eta_0+\eta_1+ \dots +\eta_{n-1}}( \textstyle
\sum_{j=0}^{n-1} \b_j)\,I(1)\,,
\end{equation}
whereas multiplication of the actual integral with $e(t_n)$, subsequent integration with respect to $t_n$,
interchanging the order of integration, and the change of variables $t_j \mapsto t_j$ for $1\leq j\leq n-1$, $t_n
\mapsto \sum t_j$, result in the integral
\begin{equation}
\label{betalikeint3} \int_{\R^{n}}  e(t_1+ \cdots +
t_n)\,|t_n|^{\b_0-1}\,\sgn(t_n)^{\eta_0}\,{\prod}_{j=1}^{n-1}\bigl(\,
|t_j|^{\b_j-1}\sgn(t_j)^{\eta_j}\bigr)\, dt_1\cdots
    dt_{n}\,.
\end{equation}
Strictly speaking these integrals converge only conditionally, in the range $\Re\b_j \in (0,1)$. They can be turned
into convergent integrals by a partition of unity argument and repeated integration by parts; for details see
\cite{inforder}. The integral (\ref{betalikeint3}) splits into a product of integrals of the type (\ref{eisen3}).
The explicit formula for this integral, equated to the expression (\ref{betalikeint2}), gives the formula we want
for $I(1)$, and hence for $I(t_n)$. Absolute convergence of $I(t_n)$ in the range $\,\Re \b_j>0$, $\,\Re(\sum
\b_j)<1\,$ can be established by induction on $n$. For $n=2$, the assertion follows from direct inspection. For the
induction step, one integrates out one variable first and uses the uses the induction hypothesis, coupled with the
explicit formula for the remaining integral in $n-2$ variables. \bx

\section{Adelization of Automorphic Distributions}\label{adelize}

The definition of automorphic distribution in \secref{autdistsec} used classical language, as it is better suited
for describing the necessary analysis of distributions on Lie groups.  However, modern automorphic forms  heavily
uses the language of adeles to simplify and organize calculations, especially for general congruence subgroups
$\G$.  In this section, we extend the notions there to the adeles by illustrating two different methods.  In the
first, we use strong approximation to derive an adelization of cuspidal automorphic distributions, analogous to the
usual procedure of adelizing automorphic forms; in the second, we construct adelic Eisenstein distributions
directly.   Both constructions can be adapted to either case, and rely on the analysis in earlier sections at their
core:~it should be emphasized that the role of the adeles here is  nothing more significant than a bookkeeping
mechanism.  However, there are deeper generalizations of this adelization   which simultaneously take into account
embeddings of several components of an automorphic representation.  Such distributions are more complicated, and
are useful for extending our theory to nonarchimedean places and number fields. The section concludes with the
adelic analog of the pairing of the previous section.

For the sake of clarity, we have chosen to give an explicit, detailed discussion of this adelization for the linear algebraic group $GL(n)$ over $\Q$; this suffices for the application in \cite{extsqpaper}.  However, the method generalizes to adelic automorphic representations for arbitrary connected, reductive linear algebraic   groups defined over arbitrary number fields.  We will make comments about the general case after describing the specifics for $GL(n)$ over $\Q$.

We for the most part use standard notation:~$\A$ refers to the adeles of $\Q$,  and $\A_f$ denotes the finite
adeles, i.e., the restricted direct product of all $\Q_p$ with respect to $\Z_p$, $p<\infty$.  If $H$ denotes a
group defined over $\Z$ such as  $G=GL(n)$ or the unit upper triangular matrices $N$, we use the notation $H(R)$ to
represent its $R$-points for the rings
 $R=\Z,\Q,\Q_p,\R,\A,$ and $\A_f$.  The maximal compact subgroup $\prod_{p<\infty}G(\Z_p)$ of $G(\A_f)$ will be denoted by $K_f$.  We often stress membership in one of these groups with an appropriate subscript; for example, the general adele $g_\A \in G(\A)$ can be decomposed as the product $g_\A=g_\infty \times g_2 \times g_3 \times g_5 \times \cdots$, or more concisely as $g_\infty \times g_f$, where the finite part $g_f\in G(\A_f)$ is the remaining product over the primes.  The group $G(\Q)$ sits inside each $G(\Q_p)$, and so at the same time embeds diagonally into $G(\A)$.  In order to avoid confusion here we shall use $G_\Q$ to denote this diagonally-embedded image; likewise, we let $H_\Q\subset G_\Q$ denote the diagonally embedded image of the rational points of an algebraic subgroup $H\subset G$ defined over $\Z$.  Thus strong approximation, for example, asserts that $G(\A)=G_\Q G(\R)K_f$.

Suppose now  that $\pi=\otimes_{p\le\infty}\pi_p$ is an irreducible,  cuspidal adelic automorphic representation of
$G(\A)$, with representation space $U\subset L^2_\omega(G_\Q\backslash G(\A))$ under the right action of $G(\A)$.
Here $\omega$ denotes a   character of the center $Z(\A)$, which we may assume is a finite order character after
twisting $\pi$ by a character of the determinant. Each function $\phi_\A \in U$ restricts to a function $\phi_\R$
on $G(\R)\subset G(\A)$.  Since the representation $\pi$ acts continuously, $\phi_\A$ is stabilized by a congruence
subgroup $K$ of $K_f$.  At the same time it is invariant on the left under $G_\Q$; since the $K_f$ factor commutes
across the $G(\R)$ factor, we conclude that  $\phi_\R$ is left-invariant under a congruence subgroup $\G$ of
$G(\Z)$. The same holds true (with different $K$ and $\G$) if we restrict $\phi_\A$ to a different section of
$G(\R)$ inside $G(\A)$, for example one of the form $G(\R)\times\{g_f\}$:~this is simply the restriction to $G(\R)$
of $\pi(g_f)\phi_\A$.  By strong approximation and the left invariance of $\phi_\A$ under $G_\Q$, this is
tantamount to left translating $\phi_\R=\phi_\A|_{G(\R)}$ by a rational, real matrix whose inverse approximates
$g_f$.   Thus adelic automorphic forms are functions from $G(\A_f)$ to smooth automorphic forms on $G(\R)$.  We
shall use this vantage point as a template for adelizing automorphic distributions.

We now assume, as we may, that $\phi_\A$ corresponds to a nonzero pure tensor for $\pi=\otimes_{p\le\infty}\pi_p$
that is furthermore a smooth vector for $\pi_\infty$.  Right translation by $G(\R)$ commutes with the above
correspondence, so $\phi_\R$ sits inside a classical automorphic representation equivalent to $\pi_\infty$. It is
therefore the image of an embedding of the form (\ref{autodist1}).  By connecting these two constructions, an
automorphic distribution $\tau$ now defines an embedding $J$ of $(\pi_\infty,V_\infty)$ into a subspace $U_\infty$
of  $U$:~the closure of the subspace spanned by right $G(\R)$-translates of $\phi_\A$.

 Again as in \secref{autdistsec},    $\tau$  is a distribution vector for $\pi_\infty'$, and hence may be viewed as a distribution on $G(\R)$ once a principal series embedding  $\pi_\infty'\hookrightarrow V_{\l,\d}$ has been chosen (cf.~(\ref{autodist6})).  In what follows we fix such an embedding.
The above procedure of course  associates a distribution in $C^{-\infty}(G(\R))$ to any right translate of
$\phi_\A$ by $g_f \in G(\A_f)$, a distribution which  is left invariant under a discrete group that depends on
$g_f$.
    Assembling these together, we form a map from $G(\A_f)$ to $C^{-\infty}(G(\R))$ which we call an ``adelic automorphic distribution'' for the  automorphic representation $\pi$.
More concretely, $\tau_\A(g_\A)=\tau_\A(g_\infty\times g_f)$ is defined to be the automorphic distribution in the
variable $g_\infty$ which describes the embedding of $(\pi_\infty,V_\infty)$ into the space $\{$restrictions of
functions in $\pi(g_f)U_\infty$ to $G(\R)\}$.

The fixed principal series embedding for $\pi_\infty'$ naturally exhibits $\pi_\infty$ as the quotient of the dual
principal series $V_{-\l,\d}$.  In particular, we may regard the pairing between $\tau(g_\infty\times g_f)$ and
smooth vectors $v(g_\infty)$ in $V_\infty$ as integration in $g_\infty$ over a flag variety.  We shall use the
following notation generalizing (\ref{autodist4}):
\begin{equation}\label{adelautodist4}
\aligned
    J(v)(h_\infty\times h_f) \ \ & = \ \ \langle\, \tau_\A( g_\infty\times h_f)\,,\,\pi_\infty(h_\infty)\,v(g_\infty) \,\rangle \\ & = \ \  \langle \,\tau_\A(h_\infty  g_\infty\times h_f)\,,\,v(g_\infty) \,\rangle  \,,
\endaligned
\end{equation}
where $g_\infty$ is again the variable of integration in the pairing.

By convention $\tau_\A$ behaves like a function under diffeomorphisms  and is dual to smooth, compactly supported
measures in the $g_\infty$ variable. Right translation of $\tau_\A$ by $G(\A_f)$ corresponds to right translation
of functions in $U$.  The group $G(\A)$ also acts on $\tau_\A$  by left translation,
 \begin{equation}\label{piactsontauleft}
    \(\ell(h_\A)\tau_\A\)(g_\A) \ = \ \tau_\A(h_\A\i g_\A)\,.
 \end{equation}
  This action on $\tau_\A$, restricted to $G(\R)$, is consistent with (\ref{autodist4}) and (\ref{adelautodist4}), but note however that its restriction to $G(\A_f)$ acts on the left (as opposed to on the right, as it does for functions in $U$).  Because the purpose of (\ref{piactsontauleft}) is  merely notational, this discrepancy will be harmless.  Conjugates of the  congruence subgroup $K\subset K_f$ that stabilizes $\phi_\A$ also stabilize  $\tau_\A$:
 \begin{equation}\label{taufixleft}
    (\ell(k)\tau_\A)(g_\infty \times g_f) \ \ = \ \ \tau_\A(g_\infty \times g_f) \ \ \ \ \text{for each~}k\,\in\,g_fKg_f\i\,.
 \end{equation}
 We claim that $G_\Q$ acts trivially on $\tau_\A$ under $\ell$, i.e.,
 \begin{equation}\label{adelautdistpunch2}
    \tau_\A(\g g_\A) \  =  \ \tau_\A(g_\A) \  \ \ \ \text{for each~}\g\in G_\Q\,.
 \end{equation}
 Indeed, writing  $\g$ as  $\g_\infty\times \g_f$,
 this amounts to checking that
 \begin{equation}\label{checktauadelaut1}
    \langle \tau_\A( \g_\infty g_\infty\times \g_f g_f),v(g_\infty) \rangle \ \ = \ \ \langle \tau_\A(g_\infty\times g_f),v(g_\infty) \rangle\, ,
 \end{equation}
 or equivalently,
 \begin{equation}\label{checktauadelaut2}
    J(v)(\g_\infty\times \g_f g_f) \ \ = \ \ J(v)(g_f)
 \end{equation}
 for arbitrary $g_f\in \A_f$ and  smooth vectors $v\in V_\infty$.   The left hand side, $J(v)(\g g_f)$, equals the right hand side because the function $J(v)\in U$ is automorphic under $G_\Q$.

Let us now briefly indicate how this adelization works for a general connected, reductive linear algebraic group defined over a number field $F$ and its adele ring $\A=\A_F$ (we refer to \cite{boreljacquet} as a general reference for the definition, and  facts quoted below).  Let $\phi_\A$ again denote a smooth vector for an automorphic representation $\pi=\otimes_v \pi_v$ of $G(\A)$, where $v$ runs over all places of $F$.  The function $\phi_\A$ on $G(\A)$ is left invariant under the diagonally embedded $G_F$, and is right invariant under a congruence subgroup $K$ of $K_f$, the product of maximal compact subgroups of $G(F_v)$ over all nonarchimedean places $v$ of $F$. Though strong approximation fails in this setting (even for $G=GL(n)$ when the class number of $F$ is greater than 1), the restriction of $\phi_\A$ to $G_\infty$, the product of $G(F_v)$ over all archimedean places $v$, is  left invariant under
\begin{equation}\label{numberfieldchange1}
    \G \ \ = \ \ \{ \, \g \,\in\,G(F) \ | \ \g_f \in K  \,  \}\,,
\end{equation}
regarded as a subgroup of $G_\infty$.  Since $\G$ is an arithmetic subgroup of $G_\infty$, the quotient $Z_\infty \G\backslash G_\infty$ has finite volume, where $Z$ is the maximal $F$-split torus of the center of $G$, and $Z_\infty$ denotes the product of $Z(F_v)$ over all archimedean places $v$.  Automorphic representations are assumed to transform according to a character of the adelic points $Z(\A)$ of $Z$.  Thus, as before, the restriction of a vector in the adelic automorphic representation gives rise to   a classical automorphic representation of the real group $G_\infty$, and hence an automorphic distribution on $G_\infty$ (this uses the fact that the Casselman-Wallach embedding theorem holds for arbitrary real reductive groups).  Right translation by $G(\A_f)$ then allows us to construct an adelic automorphic distribution $\tau_\A$ following the same procedure as before.

 We now return to some features of the earlier discussion about $G=GL(n)$ over $F=\Q$, starting with a description of
   the adelic version of the Whittaker distribution $w_{\l,\d}$ from (\ref{btransform}).  Let $\psi_{+}$
denote  the standard choice of additive character on $\Q\backslash \A$:~the unique such character whose archimedean
component maps $x \mapsto e^{2\pi i x}$.  (What we say below needs to be modified slightly if a different
nontrivial character of $\Q\backslash \A$ is chosen instead.)  There is a standard group homomorphism $c$ defined
on the group of unipotent upper triangular matrices $N$, given by summing the entries just above the diagonal:
\begin{equation}\label{ccharacter}
    c \ : \ (n_{ij}) \ \ \mapsto
    \ \ n_{1,2} \,+\, n_{2,3} \,+ \, \cdots
    \,.
\end{equation}
The composition $\psi_{+}\circ c$ is a nondegenerate  character of $N_\Q\backslash N(\A)$, and is used to define
global Whittaker integrals on the automorphic representation $\pi$:
\begin{equation}\label{classwhit}
    W_{\phi_\A}(g) \ \   = \ \ \int_{N_\Q\backslash N(\A)} \phi_\A(ng) \, \psi_{+}(c(n))\i\,dn\ , \ \ \ \, \phi_\A \,\in\,U \,.
\end{equation}
Here, as usual, $dn$ denotes Haar measure on $N(\A)$, normalized to give the quotient $N_\Q\backslash N(\A)$ volume
equal to 1. Likewise, we define an analogous adelic Whittaker integral for $\pi$ using $\tau_\A$:
\begin{equation}\label{whittonta1}
    w(g) \ \ = \ \ \int_{N_\Q\backslash N(\A)} \tau_\A(ng) \, \psi_{+}(c(n))\i\,dn\, ,
\end{equation}
or more succinctly
\begin{equation}\label{whittonta2}
    w \ \ = \ \ \int_{ N(\A)/N_\Q} \ell(n)  \tau_\A \  \psi_{+}(c(n))\,dn\, .
\end{equation}
Like $\tau_\A$, $w(g)=w(g_\infty\times g_f)$ should be thought of as a function of $g_f\in G(\A_f)$ with values in
$C^{-\infty}(G(\R))$.  Indeed, for any fixed $g_f\in G(\A_f)$,  (\ref{taufixleft}) shows that $\tau_\A$ is
stabilized by a finite index subgroup of $K_f\cap N(\A_f)$; strong approximation then shows this integration is
therefore actually over a finite cover of the compact quotient $N(\Z)\backslash N(\R)$.  Hence it reduces to
(\ref{taucoeffbform}) and gives a valid distribution in the $g_\infty$ variable.  If $v$ is a smooth vector for
$V_\infty$ and $\phi_\A=J(v)$, then it is easily seen that the distribution $w$ embeds $v$ to (\ref{classwhit}).
This is because the pairing between distributions and vectors here involves integration on the right, whereas the
above integrations take place on the left.

When $\phi_\A$ is a pure tensor for $\pi=\otimes_{p\leq \infty}\pi_p$, the integral (\ref{classwhit}) factors into
a product of local Whittaker functions:
\begin{equation}\label{factorizepure1}
    W_{\phi_\A}(g_\infty\times g_f) \ \ = \ \ W_\infty(g_\infty)\,W_f(g_f) \, \ , \ \ \ W_f(g_f) \ \ = \ \ \prod_{p< \infty}W_p(g_p)  \, .
\end{equation}
 Here the $W_p$ lie in the Whittaker model ${\mathcal W}_p$ for $\pi_p$, and are constrained to be the standard spherical Whittaker function (i.e., $W_p|_{G(\Z_p)}\equiv 1$) for almost all primes $p$.  Importantly, by varying the pure tensor $\phi_\A$, the $W_p$ can be chosen arbitrarily in ${\mathcal W}_p$  for any given finite set of primes.
Were we to instead start with  such a modified choice of $\phi_\A\in U$ and construct $\tau_\A$ from it as above,
its adelic Whittaker integral (\ref{whittonta1}) would have a similar factorization:
\begin{equation}\label{factorizepure2}
      w(g_\infty\times g_f) \ \ = \ \ w_\infty(g_\infty)\,W_f(g_f) .
\end{equation}
The distribution $w_\infty \in C^{-\infty}(G(\R))$ coincides with a nonzero multiple of  the distribution
$w_{\l,\d}$ from (\ref{btransform}), where $(\l,\d)$ are the principal series parameters for the Casselman
embedding of $\pi_\infty'$.  The paper \cite{chm} provides a rather complete study of the connection between the
archimedean Whittaker distributions $w_\infty$ and Whittaker functions for general Lie groups.  The remaining
product over primes is itself naturally related to the coefficient in (\ref{canonexttauabel}).

We have therefore shown the following fact, which is useful in constructing adelic automorphic distributions with
prescribed behavior at finite places.
\begin{prop}\label{localchoice}
    Let $\pi=\otimes \pi_p$ be a cuspidal automorphic representation of $GL(n)/\Q$, and $S$ any finite set of primes.  For each $p\in S$ chose a function $W_p$ in the Whittaker model for $\pi_p$, and set $W_p$ equal to the standard spherical vector for each prime $p\notin S$.  Then there exists a pure tensor $\phi_\A$ for $\pi$ whose corresponding adelic automorphic distribution  $\tau_\A$ satisfies (\ref{factorizepure2}).
\end{prop}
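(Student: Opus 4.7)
The plan is to construct $\phi_\A$ as a pure tensor whose local Whittaker images are the prescribed $W_p$, and then simply invoke the constructions developed earlier in the section to produce $\tau_\A$ and read off the factorization of $w$.

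First I would use the uniqueness and faithfulness of local Whittaker models at each nonarchimedean place. Since $\pi$ is cuspidal, each $\pi_p$ is generic; thus, for every $p \in S$ the local Whittaker map is a $G(\Q_p)$-equivariant isomorphism of $\pi_p$ onto $\mathcal{W}_p$, so there exists a smooth vector $\phi_p \in \pi_p$ whose image under the Whittaker functional is exactly the prescribed $W_p$. For $p \notin S$ (almost all of which are already unramified for $\pi$), I would choose $\phi_p$ to be a $G(\Z_p)$-fixed vector normalized so that its image is the standard spherical $W_p$; outside the ramified primes in $S$ this is the usual spherical vector, and at the remaining ramified $p \notin S$ the hypothesis that $W_p$ is ``the standard spherical vector'' constrains the representation itself to be unramified, making such a choice possible. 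At infinity, I would pick a nonzero smooth vector $\phi_\infty \in \pi_\infty^\infty$.

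Second, I would assemble $\phi_\A = \phi_\infty \otimes \bigotimes_{p < \infty} \phi_p$, regarded as an element of $U \subset L^2_\omega(G_\Q \backslash G(\A))$ via the isomorphism $\pi \simeq U$, and apply the construction of the preceding part of Section~\ref{adelize} to build the associated adelic automorphic distribution $\tau_\A$. By that construction, for each fixed $g_f \in G(\A_f)$, the distribution $g_\infty \mapsto \tau_\A(g_\infty \times g_f)$ is the classical automorphic distribution corresponding to the embedding $\pi_\infty \hookrightarrow$ (right $G(\R)$-span of the restriction of $\pi(g_f)\phi_\A$ to $G(\R)$).

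Third, I would compute $w$ via (\ref{whittonta1}). For fixed $g_f$, the stabilizer property (\ref{taufixleft}) combined with strong approximation reduces the integral over $N_\Q \backslash N(\A)$ to a finite-level integral of the classical distribution $\tau_\A(\,\cdot\, \times g_f)$ against $\psi_+ \circ c$. This is exactly the averaging (\ref{taucoeffbform}) that computes the archimedean Whittaker distribution of the restriction of $\pi(g_f)\phi_\A$ to $G(\R)$. Because $\phi_\A$ is a pure tensor, the global Whittaker integral of $\pi(g_f)\phi_\A$ factors as in (\ref{factorizepure1}), namely as $W_\infty(\cdot) W_f(g_f)$, where $W_f(g_f) = \prod_{p<\infty} W_p(g_p)$ and $W_\infty$ is the archimedean Whittaker function associated to $\phi_\infty$. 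Passing from the function $W_\infty$ to its distributional counterpart $w_\infty = w_{\l,\d}$ (a nonzero scalar multiple thereof, by the discussion following (\ref{factorizepure2}) and the results of \cite{chm}), and noting that $W_f(g_f)$ is a \emph{scalar} once $g_f$ is fixed, the factorization (\ref{factorizepure1}) transfers verbatim to the distributional side and gives (\ref{factorizepure2}).

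The only real subtle point — and the main obstacle — is verifying that the distributional Whittaker coefficient $w_\infty$ obtained from $\tau_\A$ via this reduction really is the canonical archimedean Whittaker distribution $w_{\l,\d}$ (up to a nonzero scalar), rather than possibly zero. Nonvanishing follows because $\pi_\infty$ is generic, so the archimedean Whittaker functional is nonzero on $\pi_\infty^\infty$; after rescaling $\phi_\infty$, or equivalently absorbing the resulting nonzero scalar into the $W_p$ for a single $p \in S$, the factorization (\ref{factorizepure2}) holds on the nose. The identification of $w_\infty$ with $w_{\l,\d}$ itself is the content of \cite{chm} and does not need to be reproved here.
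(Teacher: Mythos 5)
Your proposal is correct and follows essentially the same route as the paper: the paper's ``proof'' is precisely the preceding discussion, namely that for a pure tensor the global Whittaker integral (\ref{classwhit}) factors as in (\ref{factorizepure1}) with the $W_p$ freely prescribable at finitely many places, and that the adelic Whittaker integral (\ref{whittonta1}) of the associated $\tau_\A$ reduces via (\ref{taufixleft}), strong approximation, and (\ref{taucoeffbform}) to the same factorization with $w_\infty$ a nonzero multiple of $w_{\l,\d}$ by \cite{chm}. Your added care about the nonvanishing of the archimedean Whittaker functional and about unramifiedness at $p\notin S$ only makes explicit what the paper leaves implicit.
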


A famous theorem independently proven by Piatetski-Shapiro and Shalika \cite{shalmulone,pscorvallis} states that a
smooth vector $\phi_\A \in U$ can be reconstructed as the sum of left translates of its global Whittaker function
(\ref{classwhit}) by coset representatives $\mathcal C$ for $N^{(n-1)}_\Q\backslash GL(n-1)_\Q$, where
$N^{(n-1)}=\{(n-1)\times (n-1)$~unit upper triangular matrices$\}$.  The analogous formula
\begin{equation}\label{tauareconstruct}
    \tau_\A(g) \ \ = \ \ \sum_{\g\,\in\,\mathcal C}w\(\ttwo{\g}{}{}1 g\)
\end{equation}
holds for $\tau_\A$, as a consequence of the above relationships between embeddings of smooth vectors $v \in
V_\infty$.  It can also be proven using Fourier analysis on the nilpotent group $N(\A)$, following along the lines
of the original argument in \cite{shalmulone,pscorvallis}.  In particular, integrating (\ref{tauareconstruct}) over
$N'_\Q\backslash N'(\A)$, where $N'=[N,N]$ is the derived subgroup of $N$, gives the following formula for the
adelization of $\tau_{\text{abelian}}$:
\begin{equation}\label{tauabeladel}
    \tau_{\text{abelian}}(g) \ \ = \ \ \sum_{k\,\in\,\Q^{n-1}}w(D(k)g)\,,
\end{equation}
where $D(k)\in G_\Q$ is the matrix defined just after (\ref{btransform}).  It is evident that $W_f(D(k)g)$ from
(\ref{factorizepure2}) must equal the ratio multiplying $w_{\l,\d}(D(k)g)=w_\infty(D(k)g)$ in
(\ref{canonexttauabel}).  This observation also demonstrates that the normalized coefficients $a_k$ are independent
of the chosen Casselman embedding.

Next we turn to the adelic version of the mirabolic Eisenstein series distributions that were defined and
analytically continued in \secref{eisenstein}.  Though these can be constructed as a special case of the adelic
automorphic distributions just described, it is more useful to construct them directly, and then verify that they
match the earlier construction. Jacquet and  Shalika  studied adelic mirabolic Eisenstein series as part of their
integral representations of the Rankin-Selberg $L$-functions on $GL(n)\times GL(n)$ \cite{jseuler}  and the
exterior square $L$-functions on $GL(2n)$ \cite{jsextsq}.  As we commented earlier, it is also possible to derive
the results here from theirs, using sophisticated machinery of Casselman and Wallach.

 Our adelic construction involves modifying the archimedean data in the Jacquet-Shalika construction in order to mimic the $\d$-function  that is averaged in (\ref{eisen1}), but leaving the nonarchimedean data intact. We begin by recalling the Schwartz-Bruhat space of $\Q_p^{\, n}$, which is the usual Schwartz space in $n$ real variables when $p=\infty$, and is the space of locally constant, compactly supported functions when $p<\infty$.  The latter are precisely the finite linear combinations of characteristic functions of sets of the form $v+p^N\Z_p^{\, n}$, where $v\in \Q_p^{\, n}$ and $N\in \Z$; for $v$ fixed one need only consider $N$ large, because of overlap among  these sets.  The global adelic Schwartz-Bruhat space consists of all finite linear combinations of functions which are global products $\Phi(g)=\prod_{p\le \infty}\Phi_p(g_p)$ of Schwartz-Bruhat functions $\Phi_p$ on $\Q_p^{\,n}$, in which all but a finite number of functions $\Phi_p$ are constrained to be the ``standard unramified choice'' of the characteristic function of $\Z_p^{\,n}$.

 The adelic Eisenstein series distributions are designed to have central character $\omega\i$, the inverse of the central character of $\tau_\A$; this is done in anticipation of the pairing between these objects at the end of the section.  Strong approximation for $\A^*$ equates the double cosets $\Q^*\backslash \A^*/\R^*_{>0}$ to the inverse limit of all $(\Z/N\Z)^*$, $N\in {\mathbb N}$.
 Therefore any Dirichlet character $\psi$, in particular the one in (\ref{eisen1}),
has an adelization to a global character  $\psi_\A=\prod_{p\le \infty}\psi_p$  of $\A^*$
that is trivial on $\Q^*$.\footnote{Please note this identification between Dirichlet and global characters is inverse to the one used by Jacquet-Shalika.}
 We assume for the rest of the paper that
 \begin{equation}\label{globalnthpower}
\psi_\A\ \  = \  \  \chi^n\,\omega\i\,,
\end{equation}
where $\chi$ is also a finite order character of $\Q^*\backslash \A^*$ of parity $\eta\in \Z/2\Z$, consistent with
(\ref{pair8}).

Set $P'$ equal to the $(n-1,1)$ standard parabolic subgroup of $G$, so that $P'=w_{\text{long}}Pw_{\text{long}}$
(cf.~(\ref{mira1})). Jacquet-Shalika form their Eisenstein series as averages of the function
\begin{equation}\label{jacdoes1}
    I(g,s) \ \ = \ \ \chi(\det g)\i\, |\det g|^s \,\int_{\A^*}\Phi(e_n tg)\,|t|^{ns}\,\psi_\A(t)\i\,d^*t\,,
\end{equation}
where $e_n=(0,0,\ldots,0,1)$ is the $n$-dimensional elementary basis row vector. Our construction of the Eisenstein
distribution differs in that we modify the archimedean component $\Phi_\infty$ of each summand of $\Phi$ to be the
$\d$-function of a nonzero point in $\R^n$. To emphasize this distinction, we sometimes refer to Jacquet-Shalika's choice as $\Phi_{\text{JS},\infty}$ and ours as $\Phi_{\text{D},\infty}$.
When  $\Phi(g)=\prod_{p\le \infty}\Phi_p(g_p)$ is a pure tensor, the
integral (\ref{jacdoes1})  splits as a product of local integrals over $\Q_p$, $p\le \infty$, so that
$I(g,s)=I_\infty(g_\infty,s) I_f(g_f,s)$,  $I_f(g_f,s)$ being the product over all $p<\infty$.  The computation of
$I_f(g_f,s)$ is unchanged from the setting of Jacquet-Shalika, but the archimedean integral
\begin{equation}\label{jacdoes1b}
    I_\infty(g_\infty,s) \ \ = \ \ \sgn(\det g_\infty)^\eta\,|\det g_\infty|^s\,\int_{\R^*}
    \Phi_\infty(e_ntg_\infty)\,|t|^{ns}\,\sgn(t)^{\e}\,d^*t
\end{equation}
differs in that it defines a distribution on $G$ instead of a smooth function when $\Phi_\infty=\Phi_{\text{D},\infty}$.
  The local integrals obey the transformation law
  \begin{equation}\label{ipnewtrans}
  I_{p}\(\ttwo{B}{\star}{}{a}g_p\) \ \ = \ \  \psi_p(a)\, \chi_p(a)\i \, |a|^{-(n-1)s}\, \chi_p(\det{B})\i \,|\det{B}|^s\, I_{p}(g_p)\,,
\end{equation}
 as can be seen by the change of variables $t\mapsto t/a$ in the integral.

We shall now describe how the respective local integrals $I_{\text{JS},\infty}$ and $I_{\text{D},\infty}$ are related by right smoothing.  If $\phi$ is any smooth, compactly supported function on $G(\R)$, we may choose
\begin{equation}\label{rightconv1}
 \Phi_{\text{JS},\infty}(v) \ \ = \ \    \int_{G(\R)}\Phi_{\text{D},\infty}(vh)\,\phi(h)\,\sgn(\det h)^\eta\,|\det h|^s\,dh \ \, , \ \ \ \ v \, \in \, \R^n\,,
\end{equation}
since the integral defines a smooth function of compact support in $v$. The  respective local integrals (\ref{jacdoes1b}) of  $\Phi_{\text{JS},\infty}$ and $\Phi_{\text{D},\infty}$ are related by
\begin{equation}\label{rightconv2}
    I_{\text{JS},\infty}(g_\infty,s) \ \ = \ \ \int_{G(\R)}I_{\text{D},\infty}(g_\infty h,s)\,\phi(h)\,dh\,.
\end{equation}
It follows that right convolution of our distributional $I(g,s)$ over $G(\R)$ results in an instance of Jacquet-Shalika's (\ref{jacdoes1}).

We now consider the computation of $I(g,s)$  for a particular type of pure tensor $\Phi$, namely when $\Phi_\infty$
is the $\d$-function supported at $e_1=(1,0,\ldots,0)$  and $\Phi_p$ is the characteristic function of
$e_n+p^{N_p}\Z_p^{\, n}$, where $N=\prod p^{N_p}$ is the factorization of a positive integer $N$. Then
$I_\infty(g,s)$ is  supported on $P'(\R)w_{\text{long}}=w_{\text{long}}P(\R)$ by construction, and is in fact a
constant multiple of the distribution
\begin{equation}
\label{jacdoes2-1}
\d_\infty \ \in \ W_{\nu,\e}^{-\infty}\,\otimes\,\sgn(\det)^\eta
\end{equation}
  defined in (\ref{mira10}), with  $\nu=n(s-1/2)$ (cf.~(\ref{eisen5})). The local integral  for $p<\infty$
is computed as
\begin{equation}
\label{jacdoes2}
I_p(g_p,s) \ \ = \ \   \chi_p(\det g_p)\i|\det g_p|^s \int_{\srel{t\,\in\,\Q_p^*}{tv\,\in\,e_n+p^{N_p}\Z_p^{\,n}}} |t|^{ns}\,\psi_p(t)\i\,d^*t\,,
\end{equation}
where $v$ is the bottom row of $g$. The transformation law (\ref{ipnewtrans}) reduces the computation to $g_p\in
GL(n,\Z_p)$, a set of coset representatives for the subgroup of upper triangular matrices, so that in particular we
may assume $v\in \Z_p^{\,n}$, $p\ndiv v$.  In the case that $N_p=0$, the set in the second constraint is simply
$\Z_p^{\, n}$, and the integration is over $0<|t|\le 1$.  The integral is then a  Tate integral for $L(s,\psi)$:~it
represents $(1-\psi(p)p^{-ns})\i$ if $\psi_\A$ is unramified at $p$, and zero otherwise. If $N_p\ge 1$, the second
constraint reads $tv_j\equiv 0 \imod{p^{N_p}}$ for $j<n$, while $tv_n\equiv 1 \imod{p^{N_p}}$.  This forces $v_n\in
\Z_p-p\Z_p$, and the range of integration to   $t\in v_n\i+p^{N_p}\Z_p$.  The integral vanishes if the ramification
degree of $\psi_p$ exceeds $N_p$, and equals a constant times $\psi(v_n)$ otherwise.

In particular, if $\g \in GL(n,\Z)$, then   $I(\g g_\infty,s)$ is  the product of a constant,
$\d_\infty(g_\infty)$, $L(ns,\phi)$, and the characteristic function of $\G_0(N)$.  The sum of this over all cosets
for $P'(\Z)\backslash G(\Z)$ is precisely the Eisenstein series in  (\ref{eisen1}), up to a constant multiple. The
coset space $P'(\Z)\backslash G(\Z)$ is in bijective correspondence with $P'(\Q)\backslash G(\Q)$ via the inclusion
map $G(\Z)\hookrightarrow G(\Q)$, because of the fact that every invertible rational matrix can be decomposed as an
upper triangular rational matrix times an invertible integral one.   We conclude that with this particular choice
of local data,
 \begin{equation}\label{adeliceis}
    E_\A(g_\A,s) \ \ = \  \ \sum_{\g\in P'_\Q\backslash G_\Q} I(\g g_\A,s)
\end{equation}
is a constant multiple of (\ref{eisen1}) when $g_\A\in G(\R)$, and in particular converges in the strong
distributional topology for $\Re{s}>1$.  Strong approximation reduces the  evaluation of the general $g_\A\in
G(\A)$ to this case, so the sum makes sense in general for $\Re{s}>1$ and defines an adelic automorphic
distribution:~ a map from $G(\A_f)$ to automorphic distributions in $C^{-\infty}(G(\R))$.  Because of (\ref{rightconv2}), the right smoothing of $E_\A(g_\A,s)$ over $G(\R)$ results in a smooth Eisenstein series on $G(\Q)\backslash G(\A)$ considered by Jacquet-Shalika.  Thus $E_\A$ is also an automorphic distribution in the earlier sense of a distribution which embeds into smooth automorphic forms.

The general choice of local data involves broader choices in two respects:~$\Phi_\infty$ may be a $\d$-function
supported at another nonzero point, and $\Phi_p$ may be the characteristic function of $v+p^N\Z_p^{\,n}$, $N$
large. Right translating $E_\A(g_\A,s)$  by some $h\in GL(n,\A)$ has the effect of replacing $\Phi(v)$ by
$\Phi(vh)$.  Since $GL(n)$ acts with two orbits on $n$-dimensional vectors, this means the general $\d$-function
for $\Phi_\infty$ can be reduced to the case above, and that the characteristic functions  for $\Phi_p$ can be
reduced to the situation that $v=0$ or $v=e_n$.  Since $e_n+\Z_p^{\, n}=\Z_p^{\, n}$, the sets $e_n+p^N\Z_p^{\, n}$
for $N\ge 0$ we considered above indeed cover all possibilities.  Thus the analytic properties of the general
instance of (\ref{adeliceis}) for linear combinations of such pure tensors $\Phi$ reduce to those we have just
considered.  In particular they have a meromorphic continuation to $s\in \C-\{1\}$, with at most a simple pole at
$s=1$ that occurs only when $\psi$ is trivial.

Finally, we conclude by writing the general form of the automorphic pairing in terms of adeles, generalizing
(\ref{thmpairingo}).  We need to slightly adapt the notation there to the adelic setting.  Let $U$ denote  the
algebraic group
  \begin{equation}\label{newU}
    U  \ = \ \left\{ \left. \begin{pmatrix}
\textstyle{I_{n}}  & \textstyle{A} \\ \textstyle{0_{n}}  &
\textstyle{I_{n}}  \end{pmatrix}\ \right| \ A\in M_{n\times
n}\, \right\} \ \ \subset\  \ GL(2n)
\end{equation}
 whose real points were previously denoted by $U$  in (\ref{pair2}).
The character $\theta$ from  (\ref{pair4}) has a natural adelic extension,
  \begin{equation}\label{pair4a}
   \theta : U(\A)\ \longrightarrow \ \C^*\,,\qquad
\theta\(\begin{smallmatrix} I_n & A \\ 0_n & I_n \end{smallmatrix}
\) = \psi_{+}(\tr A)\,,
\end{equation}
 where $\psi_{+}$ is the additive character defined just above (\ref{ccharacter}).  Let $du$ denote the Haar measure on $U(\A)$ which gives the quotient $U_\Q\backslash U(\A)$ volume 1.

With $f_1$, $f_2$, and $f_3$ still standing for flag representatives in $G(\R)$ and $\psi\in C_c^\infty(G(\R))$
having total integral 1, the general adelic pairing is defined as
\begin{equation}\label{adelicpairing}
\gathered
    P(\tau_\A,E_\A(s)) \ \ =  \qquad\qquad\qquad\qquad\qquad\qquad\qquad
    \qquad\qquad\qquad\qquad\qquad  \\
    = \ \int_{Z(\A)G_\Q\backslash G(\A)}
    \int_{G(\R)}
    \left[ \int_{U_\Q\backslash U(\A)}
    \tau_\A\(u\!\ttwo{ghf_1}{}{}{ghf_2}\)\overline{\theta(u)}du   \right] \, \times \\ \qquad\qquad\qquad\qquad \qquad\qquad\ \ \ \ \times \
    E_\A(ghf_3,s)\,\psi(h)\, dh\,dg\,.
\endgathered
\end{equation}
Several comments are in order to explain why the above makes sense.
 Firstly, for the same reason as in  (\ref{whittonta1}), the bracketed   inner integration is  over a finite cover of the compact quotient $U(\Z)\backslash U(\R)$, and so defines a map from $G(\A_f)$ to distributions in $G(\R)$ that  corresponds to (\ref{pair5}).  This map is left invariant under the diagonal rational subgroup $G_\Q$   because of (\ref{adelautdistpunch2}),  and because conjugation through $u$ changes neither $\theta(u)$ nor the measure $du$.  It is also invariant under $Z(\A)$ because
     (\ref{pair8}) ensures that the central characters of $\tau_\A$ and $E_\A$ are inverse to each other.
  The invariance under both $G_\Q$  and   $Z(\A)$ is not affected by the second integration, which only involves $h$ on the right. The second integration simultaneously smooths both the bracketed expression and $E_\nu(ghf_3)$ over $G(\R)$:~it gives a map from $G(\A_f)$ to smooth automorphic functions on $G(\R)$.    According to corollary~\ref{thmpairingcor} these restrictions to $G(\R)$ are each integrable over their fundamental domain.   Because of (\ref{taufixleft}) and strong approximation, the last integration  takes place on a finite  cover of $Z(\R)G(\Z)\backslash G(\R)$ -- again by the same reasoning used for the bracketed inner integration in (\ref{adelicpairing}),  and  for (\ref{whittonta1}) before it.  Corollary~\ref{thmpairingcor} shows that the last integral is independent of the choice of $\psi$, assuming its normalization  $\int_{G(\R)}\psi(g)dg=1$.

The above pairing inherits the meromorphic continuation to $s\in \C-\{1\}$ that its classical counterpart possesses
(corollary~\ref{thmpairingcor}), as well as a functional equation from (\ref{penufe}):
\begin{equation}\label{adelicpenufe}
\gathered
    P(\tau_\A,E_\A(1-s)) \ \ =  \qquad\qquad\qquad\qquad\qquad\qquad\qquad\qquad\qquad\qquad\qquad   \\
    \qquad  N^{2ns-s-n}\,\prod_{j\,=\,1}^n G_{\d_{n+j}+\d_{n+1-j}+\eta}(s+\l_{n+j}+\l_{n+1-j})\,P(\tau'_\A,E'_\A(s))\,,
\endgathered
\end{equation}
where $\tau'_\A$ and $E'_\A$ correspond to the translated contragredient cusp form $\widetilde\tau$ and sum of the
remaining Eisenstein data, respectively, from the right hand side in proposition~\ref{penufe}.  This formula
simplifies when both $\pi_p$ and the Eisenstein data $\Phi_p$ are unramified at all $p<\infty$ (which put us in the
situation that $N=1$).  If $\Phi_\infty$ is the delta function at $e_1\in \R^n$, then
$E_\A'(s)=(-1)^{\d_1+\cdots+\d_n}E_\A(s)$ and $\tau'_\A=\widetilde\tau_\A$ (cf.~(\ref{penufefulllevel})).

\appendix

\section{Archimedean components of automorphic representations on $GL(n,\R)$}\label{appendix}

 Recall from \secref{autdistsec} that we study automorphic distributions in terms of the embedding
(\ref{autodist6}) of $\pi_\infty'$ into principal series representations $V_{\l,\d}$. These embeddings are not unique. For full principal series representations, the parameters $(\l,\d)$ are determined only up to simultaneous permutation of the $\l_j$ and $\d_j$. In general, there is a smaller choice of embedding parameters. On the other hand, the Gamma factors predicted by Langlands also depend on the nature of the archimedean component of the automorphic representation in question. We use this connection between multiple embeddings and Gamma factors to exclude unwanted poles of $L$-functions.

In this appendix we collect the relevant results about embeddings into principal series and Langlands Gamma factors. All of these are well known to experts, but do not appear in the literature -- at least not in convenient form.

\subsection{The Generic unitary dual of $GL(n,\R)$ and embeddings into the principal series.}
\label{langclasssection}

The possible real representations of $GL(n,\R)$ that can occur as the archimedean component $\pi_\infty$ of a
cuspidal automorphic representation $\pi$ are extremely limited by a number of local and global constraints.  The
latter are extremely subtle, and hence a complete classification seems hopeless at present.  In this subsection we
will instead describe the representations that satisfy perhaps the most well known local constraints for
$\pi_\infty$, namely those that are unitary and generic (i.e., have a Whittaker model).

The unitary dual for $GL(n,\R)$ was first described by Vogan \cite{vogan}, and later by Tadi{\'c} \cite{tadic}
using different methods.  Tadi{\'c} describes the unitary dual as certain parabolically induced representations
from an explicit set $\mathcal B$ of representations of $GL(n',\R)$, $n'\le n$. He also proves that permuting the
order of the induction data yields the same irreducible representation of $GL(n,\R)$.  His set $\mathcal B$ is
defined in terms of not only induced representations of   square integrable (modulo the center)
representations   of $GL(1,\R)$ and $GL(2,\R)$, but also certain irreducible quotients. These quotients,
however, are not ``large'' in the sense of \cite{voganlarge}, and hence neither are any representations induced
from them. It is a result of Casselman, Zuckerman, and Kostant (see \cite{kostant}) that all generic
representations of $GL(n,\R)$ are large, and conversely that all large representations are generic.

Hence Tadi{\'c}'s list gives a description of the generic unitary dual, once these quotients are removed from
$\mathcal B$.  We now summarize this description, after making further simplifications using transitivity of
induction. Let $n=n_1+\cdots+n_r$ be a partition of $n$, and let $P\subset G=GL(n,\R)$ be the standard parabolic
subgroup   of block upper triangular matrices corresponding to this partition.  The Levi subgroup $M$ of $P$ is
isomorphic to $GL(n_1,\R)\times \cdots \times GL(n_r,\R)$.  Let $\sigma_i$ denote   an irreducible,
  square integrable   (modulo the center)   representation of $GL(n_i,\R)$. This forces
$n_i$ to  equal  1 or 2, and $\sigma_i$ to be one of the following possibilities:
\begin{enumerate}
    \item If $n=1$, $\sigma_i$ is either the trivial representation of $GL(1,\R)\simeq \R^*$, or else the sign
        character $\sgn(x)$.
    \item If $n=2$, $\sigma_i$ is a discrete series representation $D_k$ (indexed to correspond to holomorphic
        forms of weight   $k$\,,\,\ $k\geq 2$\,).
\end{enumerate}
  These representations are self dual.   For each $1\le i \le r$ and $s_i\in\C$, the twist
 $\sigma_i[s_i]=\sigma_i\otimes |\det(\cdot)|^{s_i}$
defines a representation of $GL(n_i,\R)$. The tensor product of these twists defines a representation of $M$ which
extends to $P$ by allowing the unipotent radical of $P$ to act trivially.
 Let $I(P;\sigma_1[s_1],\ldots,\sigma_r[s_r])$ denote the
representation of $G$ parabolically induced from this representation of $P$,  where the induction is normalized to
carry unitary representations to unitary representations.  In order to be consistent with the conventions of
\cite{langlandsrealgroups}, the group action in this induced representation operates on the right, on functions
which transform under $P$ on the left.

We now give the constraints on the parameters $s_i$ that govern precisely when
$I(P;\sigma_1[s_1],\ldots,\sigma_r[s_r])$ is irreducible, generic, and unitary according to the results of
Casselman, Kostant, Tadi{\'c}, Vogan, and Zuckerman mentioned above.  We assume that this representation is
normalized to have a unitary central character, as we of course may by tensoring with a character of the
determinant.

\begin{itemize}

    \item {\bf Unitarity constraint:}  the multisets $\{\sigma_i[s_i]\}$ and $\{\sigma_i[-\overline{s}_i]\}$
        must be equal, i.e., these lists are equal up to permutation (recall the  $\sigma_i$ are self dual).
        This is because the representation dual to $I(P;\sigma_1[s_1],\ldots,\sigma_r[s_r])$ is
        $I(P;\sigma_1[-s_1],\ldots,\sigma_r[-s_r])$ .
    \item {\bf Unitary dual estimate:} $|\Re{s_i}|<1/2$.  In the case of the principal series, this is commonly called the  ``trivial bound''.
     \item {\bf Permutation of order:} for any permutation $\tau\in S_r$, the induced representations
         $I(P;\sigma_1[s_1],\ldots,\sigma_r[s_r])$ and
         $I(P^\tau;\sigma_{\tau(1)}[s_{\tau(1)}],\ldots,\sigma_{\tau(r)}[s_{\tau(r)}])$ are equal, where
        $P^\tau$ is the standard parabolic whose Levi component is $GL(n_{\tau(1)},\R)\times\cdots\times
        GL(n_{\tau(r)},\R)$.
\end{itemize}

The principal series   representations   $V_{\l,\d}$ in (\ref{ps8}) are  induced representations,
but induced from a lower triangular Borel subgroup (\ref{ps2}).   Our convention is well-suited for studying
automorphic distributions, but induction from an upper triangular Borel subgroup is the more common convention in
the literature on Langlands' classification of representations of real reductive groups \cite{langlandsrealgroups}
(e.g., his prediction of $\G$-factors for automorphic $L$-functions).   Using the  Weyl group element
$w_{\text{long}}$ from (\ref{ab9}) and the inverse map between the two, it is straightforward to show that
$V_{\l,\d}$ is equivalent to   $I(B_+;\sgn^{\d_n}[\l_n],\ldots,\sgn^{\d_1}[\l_1])$, where $B_+$
is the upper triangular Borel subgroup of $GL(n,\R)$. More generally, induction on the right from a lower
triangular parabolic involves reversing the order of the inducing data, though the order is irrelevant for the
representations in Tadi{\'c}'s classification of the unitary dual anyhow.

Embeddings into principal series  are of course  tautological for $n=1$, where all irreducible representations are
one dimensional. When $n=2$, the discrete series   representation   $D_k$ is a subrepresentation of
the principal series   representation   $V_{\l,\d}$ with para\-meters $\l=(-\f{k-1}{2},\f{k-1}{2})$
and $\d=(k,0)$. This embedding is not unique:~actually $D_k\otimes \sgn\simeq D_k$, so $\d=(k+1,1)$ is an equally
valid parameter. An irreducible principal series   representation   $V_{(\l_1,\l_2),(\d_1,\d_2)}$
embeds not only into itself, but also into $V_{(\l_2,\l_1),(\d_2,\d_1)}$. However, $D_k$ is not a
subrepresentation, but instead a quotient, of the representation $V_{(\f{k-1}{2},-\f{k-1}{2}),(0,k)}$. If
$\rho_1\hookrightarrow \rho_2$, then $\rho_1[s]\hookrightarrow \rho_2[s]$.  The twist  $V_{\l,\d}[s]$ is the
principal series   representation   $V_{\l+(s,s,\ldots,s),\d}$, so $D_k[s]$ embeds both into
$V_{(s-\f{k-1}{2},s+\f{k-1}{2}),(k,0)}$ and also $V_{(s-\f{k-1}{2},s+\f{k-1}{2}),(k+1,1)}$.  The description above shows that these are the only types of unitary generic representations of $GL(2,\R)$.

Next we move to $GL(n,\R)$ and consider a unitary, generic representation $\pi_\infty=I(P;\sigma_1[s_1], \ldots,
\sigma_r[s_r])$ as above.  Embeddings for $\pi_\infty'=I(P;\sigma_1[-s_1], \ldots,
\sigma_r[-s_r])$ may be deduced from the previous paragraph, using the principle of transitivity of induction as
follows. Let $k_i$ denote the weight of the discrete series in block $i$ (provided $n_i=2$, of course). Now define
vectors $\l\in \C^n$ and $\d\in (\Z/2\Z)^n$ in the following manner. If the integer $1\le j \le n$ is contained in
the $i$-th block $n_i$ of the partition $n=(n_1,\ldots,n_r)$, set $\l_j$ to be
\begin{equation}\label{setlitobe}
   \l_j \ \ = \ \  \left\{%
\begin{array}{cl}
    -s_i\,, & n_i=1\,; \\
 -  s_i -\f{k_i-1}{2}\,, & n_i=2 \text{~~and~~}j=n_1+\ldots+n_{i-1}+1\,; \\
   - s_i +\f{k_i-1}{2}\,, & n_i=2 \text{~~and~~}j=n_1+\ldots+n_{i-1}+2\,. \\
\end{array}%
\right.
\end{equation}
Similarly, set
\begin{equation}\label{setdtobe}
    \d_j \ \ \equiv \ \ \left\{%
\begin{array}{cl}
     \e\,,  & n_i=1\text{~~and~~}\sigma_i=\sgn(\cdot)^\e\,; \\
    k_i \,, &  n_i=2 \text{~~and~~}j=n_1+\ldots+n_{i-1}+1\,; \\
   0 \, , & n_i=2 \text{~~and~~}j=n_1+\ldots+n_{i-1}+2\,.
\end{array}%
\right.
\end{equation}
One may alternatively replace $k_i$ and $0$ in the last two cases by $k_i+1$ and 1, respectively.
 In other words, $\l$ and $\d$ are formed by
concatenating the corresponding vectors  which describe the embedding parameters for the $\sigma_i[-s_i]$, $1\le
i\le r$.
 By transitivity of induction,
$\pi_\infty'=I(P;\sigma_1[-s_1],\ldots,\sigma_r[-s_r])$ is a subrepresentation of $V_{\l,\d}$.

\subsection{Langlands'
$\G$-factors}\label{langdescript}

The $\G$-factors which accompany an automorphic $L$-function $L(s,\pi,\rho)$ in its functional equation are
conjectured to always be products, with shifts, of the functions
\begin{equation}\label{gammarc}
    \G_\R(s) \, = \, \pi^{-s/2}\,\G(s/2) \ \ \ \ \text{and} \ \ \ \
     \G_\C(s) \, = \, 2\,(2\pi)^{-s}\,\G(s) \, = \, \G_\R(s)\,\G_\R(s+1)\,.
\end{equation}
Langlands  \cite{langlandsrealgroups} gives a procedure to compute this archimedean factor $L_\infty(s,\pi,\rho)$
in terms of his description of $\pi_\infty$ as a subquotient of an induced representation, along with a calculation
involving the $L$-group representation $\rho$ and the Weil group. When dealing with the group $GL(n)$, however, it
is much more convenient to avoid the Weil group, and instead describe these $\G$-factors in terms of the (freely
permuted) induction data.  We give a description of this for some notable examples, following the description in
\cite{knapparch}.

It is  convenient to use Langlands' {\em isobaric} notation \cite{langlandsmarchen} for induced representations
\begin{equation}\label{isobardefin}
  \pi_\infty \ \ = \ \  I(P;\sigma_1[s_1],\ldots,\sigma_r[s_r]) \ \ = \ \ \sigma_1[s_1]
   \, \isobar \,\cdots \,\isobar \,\sigma_r[s_r]\, ,
\end{equation}
in which the operation  $\isobar$ on the right hand side should be thought of as a formal, abelian addition. Recall
that the classification in section~\ref{langclasssection} shows that every generic unitary representation of $GL(n)$
is an isobaric sum of the form (\ref{isobardefin}), independent of the order. We use these formal sums here only as
a bookkeeping device used to define $\G$-factors; they do not always correspond to irreducible, archimedean
components of cuspidal automorphic representations.  This formal addition satisfies the following two properties.
First, two isobaric sums $\Pi_1$, $\Pi_2$ may themselves be concatenated into a longer isobaric sum
$\Pi_1\isobar\Pi_2$. Second, an isobaric sum can be twisted by the rule $(\Pi_1\isobar\Pi_2)[s]=\Pi_1[s]\isobar
\Pi_2[s]$.

We shall explain how to define $L(s,\Pi)$ for such a formal sum $\Pi$ of twists of the $\sigma_i$, and how $\rho$
transforms $\Pi$ into another such formal sum $\rho(\Pi)$ for some examples of representations $\rho$ of
$GL(n,\C)\,=\,^L GL(n)^0$. Then $L_\infty(s,\pi,\rho)$ is defined as $L(s,\rho(\Pi))$, where $\Pi$ is an isobaric sum
for $\pi_\infty$. We start with the definition of $L(s,\Pi)$ when $\Pi$ is one of the basic building blocks
$\sigma_i$, the self-dual, square integrable representations from \secref{langclasssection}:
\begin{equation}\label{lsigma1}
    L(s,triv) \ \ = \ \ \G_\R(s) \  \ , \ \ \ \ \ \ L(s,\sgn)
    \ \ = \ \ \G_\R(s+1)\ ,
\end{equation}
\begin{equation}\label{lsigma2}
   \text{and} \ \ \ \ \ \  \ \ \ L(s,D_k) \ \ = \ \ \G_\C(s+\textstyle{\f{k-1}{2}})\,.
\end{equation}
Next are rules for isobaric sums and twists:
\begin{equation}\label{rules}
    L(s,\Pi[s']) \, = \, L(s+s',\Pi) \ \ \  \ \ \text{and} \ \ \ \
    L(s,\Pi_1\,\isobar \,\Pi_2) \, = \, L(s,\Pi_1)\,L(s,\Pi_2)\,.
\end{equation}
Therefore $L(s,\Pi)$, for a general isobaric sum $\Pi= \sigma_1[s_1]
    \isobar \cdots \isobar \sigma_r[s_r]$, is given by
\begin{equation}\label{gammind}
\aligned
    L(s,\Pi)  \ \   = \ \ \prod_{i=1}^r L(s+s_i,\sigma_i)\,,
\endaligned
\end{equation}
and is explicitly determined by the definitions (\ref{lsigma1}-\ref{lsigma2}).

Let now $\Pi=\Pi_1\,\isobar\,\Pi_2\,\isobar\cdots\isobar\,\Pi_r$ be an isobaric representation of $GL(n,\R)$, and
$\Pi'=\Pi_1'\,\isobar\,\Pi_2'\,\isobar\cdots\isobar\,\Pi_{r'}'$ be an isobaric representation of $GL(m,\R)$.   The
isobaric sum for the Rankin-Selberg tensor product representation $\Pi\times\Pi'$ of $GL(nm,\R)$ is given by
\begin{equation}\label{tensorisobar}
   \Pi \, \times \, \Pi' \ \ = \ \  \isobar_{j=1}^r\isobar_{k=1}^{r'}
  \( \Pi_j \,\times\,\Pi_k'\)\,,
\end{equation}
where now the meaning of $\Pi_j \times \Pi_k'$ must be explained.
 It is in general {\em not} the usual tensor product of two
representations (more on this below). One has the relations
\begin{equation}\label{tib2}
    \Pi[s]\,\times\,\Pi'[s'] \ \ = \ \
    \(\Pi\,\times\,\Pi'\)[s+s']\,
\end{equation}
and
\begin{equation}\label{tib3}
    \Pi\,\times\,\Pi' \ \ = \ \ \Pi'\,\times\,\Pi\, ,
\end{equation}
which along with (\ref{tensorisobar}) may be regarded as formal rules for the calculation of tensor product on
isobaric representations.  They boil  the general calculation down to the examples  of  $\sigma\times\sigma'$,
where $\sigma,\sigma'\in \{triv,\sgn,D_k\, | \, k\ge 2\}$.  First, if $\sigma$ or $\sigma'$ is one of the
representations $triv$ or $\sgn$, then the Rankin-Selberg product corresponds to the usual tensor product. The only
other case is when $\sigma$ and $\sigma'$ are both discrete series representations of $GL(2,\R)$.  In this
situation one has $D_k\times D_\ell = D_{k+\ell-1}\isobar D_{|k-\ell|+1}$. In summary $\sigma\times\sigma'$ is
given by the following table:
    \begin{center}\begin{tabular}{|c|ccc|}
\hline
 $\sigma \ \diagdown \ \sigma'$ &  $triv$ &  $\sgn$ & $D_k$ \\
\hline
 $triv$ &  $triv$ &  $\sgn$ & $D_k$ \\
 $\sgn$ &  $\sgn $&  $triv $& $D_k$ \\
 $D_\ell$ &  $D_\ell$ & $ D_\ell$ & $D_{k+\ell-1}\isobar
  D_{|k-\ell|+1}$ \\
\hline
\end{tabular}\end{center}
If $k=\ell$ there is no representation $D_1$, yet we use the convention (\ref{lsigma2}) to write
$L(s,D_1)=\G_{\C}(s)$.  In light of (\ref{gammarc}), it is equivalent to regard $D_1$ as $triv\,\isobar\,\sgn$.

We now come to the exterior square representation $Ext^2$ that maps  $GL(n)\rightarrow GL(\f{n(n-1)}{2})$.  It
satisfies the following formal rules:
\begin{equation}\label{ext2ib}
    Ext^2 \(\isobar_{j=1}^r \Pi_j\) \ \ = \ \
     \(\isobar_{j=1}^r Ext^2 \Pi_j\) \,\isobar\,\(
    \isobar_{1\le j<k\le r}\, (\Pi_j\times\Pi_k)\)\,
\end{equation}
and
\begin{equation}\label{esib}
    Ext^2 \(\Pi[s]\) \ \ = \ \ \(Ext^2\Pi\)[2s]\,.
\end{equation}
Similarly to the above situation of tensor products, it is completely determined by the following table:
\begin{center}\begin{tabular}{|c|ccc|}
\hline
 $\sigma$ &  $triv$ &  $\sgn$ & $D_k$ \\
\hline
 $Ext^2\sigma$ &  $\emptyset$ &  $\emptyset$ & $\sgn^k$ \\
\hline
\end{tabular}\end{center}
The notation $\emptyset$ here indicates not to include a corresponding term in the formal sum; equivalently,
$L(s,\emptyset)=1$.

As a final example, consider the symmetric square representation $Sym^2$ that maps $GL(n)\rightarrow
GL(\f{n(n+1)}{2})$.  It satisfies both rules (\ref{ext2ib}) and (\ref{esib}), with the substitution of $Sym^2$ for
$Ext^2$, and is completely determined by the table
\begin{center}\begin{tabular}{|c|ccc|}
\hline
 $\sigma$ &  $triv$ &  $\sgn$ & $D_k$ \\
\hline
 $Sym^2\sigma$ &  $triv$ &  $triv$ & $D_{2k-1}\boxplus\sgn^{k+1}$ \\
\hline
\end{tabular}\end{center}

To illustrate, we will conclude by explicitly calculating $L_\infty(s,\pi,Ext^2\otimes\chi)$ when $\pi$ is a
cuspidal automorphic representation of $GL(n)$ over $\Q$, and $\chi$ is a Dirichlet character. We write
$\pi_\infty$ as the isobaric sum
\begin{equation}\label{piinftydesc}
    \Pi \ \ = \ \ \( \isobar_{i=1}^{r_1} \sgn^{\e_i}[s_i]  \)  \, \isobar \,  \(
    \isobar_{j=1}^{r_2}D_{k_j}[s_{r_1+j}]\),
\end{equation}
as this is its most general form according to the description in \secref{langclasssection}.  The rules
(\ref{ext2ib}-\ref{esib}) show that
\begin{equation}\label{5easyrules}
    Ext^2\Pi \ \ = \ \ \Pi_1  \, \isobar \, \Pi_2 \, \isobar \,
    \Pi_3 \, \isobar \, \Pi_4 \, \isobar \, \Pi_5 \ ,
\end{equation}
where
\begin{equation}\label{4easypieces}
\aligned
    \Pi_1 \ \  &  = \ \ \isobar_{i=1}^{r_1} (Ext^2
    \sgn^{\e_i})[2s_i] \ \ & = &  \ \ \isobar_{i=1}^{r_1}
    \,\emptyset[2s_i]\ = \ \emptyset
    \, ,
    \\
    \Pi_2  \ \ & = \ \ \isobar_{j=1}^{r_2} (Ext^2
    D_{k_j})[2s_{r_1+j}] \ \ & = &  \ \  \isobar_{j=1}^{r_2}\sgn^{k_j}[2s_{r_1+j}]  \, ,
    \\
    \Pi_3 \ \ & = \ \ \isobar_{\stackrel{\scriptstyle{i \le r_1}}{j \le r_2}}
    (\sgn^{\e_i}\times D_{k_j})[s_i+s_{r_1+j}]
    \ \ & = & \ \  \isobar_{\stackrel{\scriptstyle{i \le r_1}}{j \le r_2}}
    D_{k_j}[s_i+s_{r_1+j}]   \, ,
    \\
    \Pi_4 \ \ & = \ \ \isobar_{1 \le i < k \le r_1}
    (\sgn^{\e_i}\times \sgn^{\e_k})[s_i+s_k] \ \ & = &
     \ \ \isobar_{1 \le i < k \le r_1}
    \sgn^{\e_i+\e_k}[s_i+s_k]  \,,
\endaligned
\end{equation}
and
\begin{equation}\label{5thrule}
\aligned
     \Pi_5 \ \ & = \ \ \isobar_{1 \le j < \ell \le r_2}
    (D_{k_j}\times D_{k_\ell})[s_{r_1+j}+s_{r_1+\ell}]  \ \
     \qquad\qquad\qquad
    \qquad \ \ \ \ \qquad  \\ & = \ \
\isobar_{1 \le j < \ell \le r_2}\,  \(
D_{k_j+k_\ell-1\,}[s_{r_1+j}+s_{r_1+\ell}]  \, \isobar \,
D_{|k_j-k_\ell|+1\,}[s_{r_1+j}+s_{r_1+\ell}]\)
    .
\endaligned
\end{equation}
If we choose $\e_{ik}$ and $\e_j' \in \{0,1\}$ to be congruent to $\e_i+\e_k$ and $k_j$ modulo 2, respectively,
then
\begin{equation}\label{fivelpieces}
\aligned L(s,\Pi_1) \ \  & = \ \ \  \,  \ \ 1 \, ,  \\
L(s,\Pi_2) \ \ & = \ \ \  \, \  \prod_{j=1}^{r_2} \G_\R(s+2s_{r_1+j}+ \e_j')\, ,  \\
L(s,\Pi_3)\ \ & = \ \  \  \ \, \prod_{\stackrel{\scriptstyle{i \le
r_1}}{j \le r_2}}
\G_\C(s+s_i+s_{r_1+j}+\textstyle{\f{k_j-1}{2}}) \, ,  \\
L(s,\Pi_4)\ \  & = \ \  \prod_{1 \le i < k \le
r_1}\G_\R(s+s_i+s_k+\e_{ik}) \, , \ \ \ \
\text{and}  \\
L(s,\Pi_5)\ \  & =\\ &\!\!\!\!\!\!  \prod_{1 \le j < \ell \le r_2}
\G_\C(s+s_{r_1+j}+s_{r_1+\ell}+\textstyle{\f{k_j+k_\ell-2}{2}})
\G_\C(s+s_{r_1+j}+s_{r_1+\ell}+\textstyle{\f{|k_j-k_\ell|}{2}})
.  \\
\endaligned
\end{equation}
Consequently, $L_\infty(s,\pi,Ext^2)=L(s,Ext^2\Pi)=L(s,\Pi_2)L(s,\Pi_3)L(s,\Pi_4)L(s,\Pi_5)$ is the product of
these factors.

The archimedean component $\chi_\infty$ of the character $\chi$ is $\sgn^\eta$, where $\eta$ is the parity
parameter of $\chi$ defined by $\chi(-1)=(-1)^\eta$.  The isobaric decomposition of
\begin{equation}\label{ext2tenschiisobar1}
Ext^2\pi_\infty\otimes\chi_\infty \ \ = \ \ \isobar_{j=1}^5\(\Pi_j\otimes\sgn^\eta\)
\end{equation}
 may be computed using the tensoring rules above.  These imply that $\Pi_1$, $\Pi_3$, and $\Pi_5$ are unchanged by tensoring with $\chi_\infty$, and that $\Pi_2$ and $\Pi_4$ change by adding $\eta$ to their exponents of $\sgn$.  The result is that
 \begin{equation}\label{ext2tenschiisobar2}
 L_\infty(s,\pi,Ext^2\otimes\chi) \ \ = \ \ L(s,\Pi_1)L(s,\Pi_2\otimes\sgn^\eta)L(s,\Pi_3)L(s,\Pi_4\otimes\sgn^\eta) L(s,\Pi_5)\,,
 \end{equation}
 where
 \begin{equation}\label{twotwistedpieces}
    \aligned
    L(s,\Pi_2\otimes\sgn^\eta) \ \ & = \ \ \  \, \  \prod_{j=1}^{r_2} \G_\R(s+2s_{r_1+j}+ \e_{j\eta}')\, ,  \\
    L(s,\Pi_4\otimes\sgn^\eta)\ \  & = \ \  \prod_{1 \le i < k \le
r_1}\G_\R(s+s_i+s_k+\e_{ik\eta}) \, , \\
    \endaligned
 \end{equation}
 and $\e_{j\eta}'$ and $\e_{ik\eta}\in\{0,1\}$ are congruent to $\e_j'+\eta\equiv k_j+\eta$
and $\e_{ik}+\eta\equiv \e_i+\e_k+\eta \pmod 2$, respectively.

\begin{bibsection}

\begin{biblist}

\bib{boreljacquet}{article}{
   author={Borel, A.},
   author={Jacquet, H.},
   title={Automorphic forms and automorphic representations},
   note={With a supplement ``On the notion of an automorphic
   representation''\ by R. P. Langlands},
   conference={
      title={Automorphic forms, representations and $L$-functions (Proc.
      Sympos. Pure Math., Oregon State Univ., Corvallis, Ore., 1977), Part
      1},
   },
   book={
      series={Proc. Sympos. Pure Math., XXXIII},
      publisher={Amer. Math. Soc.},
      place={Providence, R.I.},
   },
   date={1979},
   pages={189--207},
   review={\MR{546598 (81m:10055)}},
}

\bib{bumprssurvey}{article}{
    author={Bump, Daniel},
     title={The Rankin-Selberg method: a survey},
 booktitle={Number theory, trace formulas and discrete groups (Oslo, 1987)},
     pages={49\ndash 109},
 publisher={Academic Press},
     place={Boston, MA},
      date={1989},
}

\bib{bumpfriedberg}{article}{
    author={Bump, Daniel},
    author={Friedberg, Solomon},
     title={The exterior square automorphic $L$-functions on ${\rm GL}(n)$},
 booktitle={Festschrift in honor of I. I. Piatetski-Shapiro on the occasion
            of his sixtieth birthday, Part II (Ramat Aviv, 1989)},
    series={Israel Math. Conf. Proc.},
    volume={3},
     pages={47\ndash 65},
 publisher={Weizmann},
     place={Jerusalem},
      date={1990},
}

\bib{bumpginzburg}{article}{
    author={Bump, Daniel},
    author={Ginzburg, David},
     title={Symmetric square $L$-functions on ${\rm GL}(r)$},
   journal={Ann. of Math. (2)},
    volume={136},
      date={1992},
    number={1},
     pages={137\ndash 205},
      issn={0003-486X},
}

\bib{Casselman:1980}{article}{
     author={Casselman, W.},
      title={Jacquet modules for real reductive groups},
  booktitle={Proceedings of the International Congress of Mathematicians (Helsinki, 1978)},
      pages={557\ndash 563},
  publisher={Acad. Sci. Fennica},
      place={Helsinki},
       date={1980},
}

\bib{Casselman:1989}{article}{
     author={Casselman, W.},
      title={Canonical extensions of Harish-Chandra modules to representations of $G$},
    journal={Canad. J. Math.},
     volume={41},
       date={1989},
     number={3},
      pages={385\ndash 438},
}

\bib{chm}{article}{
    author={Casselman, William},
    author={Hecht, Henryk},
    author={Mili{\v{c}}i{\'c}, Dragan},
     title={Bruhat filtrations and Whittaker vectors for real groups},
 booktitle={The mathematical legacy of Harish-Chandra (Baltimore, MD, 1998)},
    series={Proc. Sympos. Pure Math.},
    volume={68},
     pages={151\ndash 190},
 publisher={Amer. Math. Soc.},
     place={Providence, RI},
      date={2000},
}

\bib{ginzburge6}{article}{
   author={Ginzburg, David},
   title={On standard $L$-functions for $E_6$ and $E_7$},
   journal={J. Reine Angew. Math.},
   volume={465},
   date={1995},
   pages={101--131},
   issn={0075-4102},
   review={\MR{1344132 (96m:11040)}},
   doi={10.1515/crll.1995.465.101},
}

\bib{green}{article}{
author={Green, Michael},
author={Miller, Stephen D.},
author={Russo, Jorge},
author={Vanhove, Pierre},
title={Eisenstein series for higher rank groups and string theory amplitudes},
note={\url{http://arxiv.org/abs/1004.0163}},year={2010}}

\bib{jseuler}{article}{
   author={Jacquet, H.},
   author={Shalika, J. A.},
   title={On Euler products and the classification of automorphic
   representations. I},
   journal={Amer. J. Math.},
   volume={103},
   date={1981},
   number={3},
   pages={499--558},
   issn={0002-9327},
   review={\MR{618323 (82m:10050a)}},
   doi={10.2307/2374103},
}

\bib{jsextsq}{article}{
    author={Jacquet, Herv{\'e}},
    author={Shalika, Joseph},
     title={Exterior square $L$-functions},
 booktitle={Automorphic forms, Shimura varieties, and $L$-functions, Vol.\
            II (Ann Arbor, MI, 1988)},
    series={Perspect. Math.},
    volume={11},
     pages={143\ndash 226},
 publisher={Academic Press},
     place={Boston, MA},
      date={1990},
    }

\bib{knapp}{book}{
    author={Knapp, Anthony W.},
     title={Representation theory of semisimple groups},
    series={Princeton Landmarks in Mathematics},
      note={An overview based on examples;
            Reprint of the 1986 original},
 publisher={Princeton University Press},
     place={Princeton, NJ},
      date={2001},
     pages={xx+773},
      isbn={0-691-09089-0},
}

\bib{knapparch}{article}{
   author={Knapp, Anthony W.},
   title={Local Langlands correspondence: the Archimedean case},
   conference={
      title={Motives},
      address={Seattle, WA},
      date={1991},
   },
   book={
      series={Proc. Sympos. Pure Math.},
      volume={55},
      publisher={Amer. Math. Soc.},
      place={Providence, RI},
   },
   date={1994},
   pages={393--410},
   review={\MR{1265560 (95d:11066)}},
}

\bib{kostant}{article}{
   author={Kostant, Bertram},
   title={On Whittaker vectors and representation theory},
   journal={Invent. Math.},
   volume={48},
   date={1978},
   number={2},
   pages={101--184},
   issn={0020-9910},
   review={\MR{507800 (80b:22020)}},
   doi={10.1007/BF01390249},
}

\bib{langlandsmarchen}{article}{
    author={Langlands, Robert P.},
     title={Automorphic representations, Shimura varieties, and motives. Ein
            M\"archen},
 booktitle={Automorphic forms, representations and $L$-functions (Proc.
            Sympos. Pure Math., Oregon State Univ., Corvallis, Ore., 1977),
            Part 2},
    series={Proc. Sympos. Pure Math., XXXIII},
     pages={205\ndash 246},
 publisher={Amer. Math. Soc.},
     place={Providence, R.I.},
      date={1979},
}

\bib{langlandsrealgroups}{article}{
    author={Langlands, Robert P.},
     title={On the classification of irreducible representations of real
            algebraic groups},
 booktitle={Representation theory and harmonic analysis on semisimple Lie
            groups},
    series={Math. Surveys Monogr.},
    volume={31},
     pages={101\ndash 170},
 publisher={Amer. Math. Soc.},
     place={Providence, RI},
      date={1989},
}

\bib{inforder}{article}{
        author={Miller, Stephen D.},
        author={Schmid, Wilfried},
        title={Distributions and analytic continuation of Dirichlet series},
    journal={J. Funct. Anal.},
        volume={214},
        date={2004},
        number={1},
        pages={155\ndash 220},
        issn={0022-1236},
 }

\bib{voronoi}{article}{
   author={Miller, Stephen D.},
   author={Schmid, Wilfried},
   title={Automorphic distributions, $L$-functions, and Voronoi summation
   for ${\rm GL}(3)$},
   journal={Ann. of Math. (2)},
   volume={164},
   date={2006},
   number={2},
   pages={423--488},
   issn={0003-486X},
   review={\MR{2247965}},
}

\bib{korea}{article}{
   author={Miller, Stephen D.},
   author={Schmid, Wilfried},
   title={The Rankin-Selberg method for automorphic distributions},
   conference={
      title={Representation theory and automorphic forms},
   },
   book={
      series={Progr. Math.},
      volume={255},
      publisher={Birkh\"auser Boston},
      place={Boston, MA},
   },
   date={2008},
   pages={111--150}
}

%

\bib{glnvoronoi}{article}{author={Miller, Stephen D.}, author={Schmid, Wilfried}, title={A general Voronoi
summation formula for $GL(n,\Z)$},book={title={Geometric analysis: Present and Future},series={Advanced Lectures in
Math series},publisher={International Press}},note={to appear}}

\bib{decay}{article}{author={Miller, Stephen D.}, author={Schmid, Wilfried},
        title={On the rapid decay of cuspidal automorphic forms}, note={preprint}, year={2010}}

\bib{pairingpaper}{article}{author={Miller, Stephen D.}, author={Schmid, Wilfried},
        title={Pairings of automorphic distributions}, note={preprint}, year={2010}}

\bib{extsqpaper}{article}{author={Miller, Stephen D.}, author={Schmid, Wilfried},
        title={The archimedean theory of the Exterior Square $L$-functions over $\Q$}, note={preprint}, year={2010}}

\bib{pioline}{article}{
   author={Obers, Niels A.},
   author={Pioline, Boris},
   title={Eisenstein series in string theory},
   note={Strings '99 (Potsdam)},
   journal={Classical Quantum Gravity},
   volume={17},
   date={2000},
   number={5},
   pages={1215--1224},
   issn={0264-9381},
   review={\MR{1764377 (2001g:81218)}},
   doi={10.1088/0264-9381/17/5/330},
}

\bib{pspatterson}{article}{
   author={Patterson, S. J.},
   author={Piatetski-Shapiro, I. I.},
   title={The symmetric-square $L$-function attached to a cuspidal
   automorphic representation of ${\rm GL}_3$},
   journal={Math. Ann.},
   volume={283},
   date={1989},
   number={4},
   pages={551--572},
   issn={0025-5831},
   review={\MR{990589 (90d:11070)}},
   doi={10.1007/BF01442854},
}

\bib{pscorvallis}{article}{
    author={Piatetski-Shapiro, I. I.},
     title={Multiplicity one theorems},
 booktitle={Automorphic forms, representations and $L$-functions (Proc.
            Sympos. Pure Math., Oregon State Univ., Corvallis, Ore., 1977),
            Part 1},
    series={Proc. Sympos. Pure Math., XXXIII},
     pages={209\ndash 212},
 publisher={Amer. Math. Soc.},
     place={Providence, R.I.},
      date={1979},
}

\bib{shalmulone}{article}{
    author={Shalika, J. A.},
     title={The multiplicity one theorem for ${\rm GL}\sb{n}$},
   journal={Ann. of Math. (2)},
    volume={100},
      date={1974},
     pages={171\ndash 193},
      issn={0003-486X},
}

\bib{tadic}{article}{
   author={Tadi{\'c}, Marko},
   title={$\widehat{\rm GL}(n,\C)$ and $\widehat{\rm GL}(n,\R)$},
   conference={
      title={Automorphic forms and $L$-functions II. Local aspects},
   },
   book={
      series={Contemp. Math.},
      volume={489},
      publisher={Amer. Math. Soc.},
      place={Providence, RI},
   },
   date={2009},
   pages={285--313},
   review={\MR{2537046}},
}

\bib{voganlarge}{article}{
   author={Vogan, David A., Jr.},
   title={Gel\cprime fand-Kirillov dimension for Harish-Chandra modules},
   journal={Invent. Math.},
   volume={48},
   date={1978},
   number={1},
   pages={75--98},
   issn={0020-9910},
   review={\MR{0506503 (58 \#22205)}}
}

\bib{vogan}{article}{
    author={Vogan, David A., Jr.},
     title={The unitary dual of ${\rm GL}(n)$ over an Archimedean field},
   journal={Invent. Math.},
    volume={83},
      date={1986},
    number={3},
     pages={449\ndash 505},
      issn={0020-9910},
}

\bib{Wallach:1983}{article}{
    author={Wallach, Nolan R.},
     title={Asymptotic expansions of generalized matrix entries of representations of real reductive groups},
 booktitle={Lie group representations, I (College Park, Md., 1982/1983)},
    series={Lecture Notes in Math.},
    volume={1024},
     pages={287\ndash 369},
 publisher={Springer},
     place={Berlin},
      date={1983},
}

\end{biblist}
\end{bibsection}

\end{document}